\documentclass[french, 10pt, a4paper, oneside,bibliography=totocnumbered]{scrartcl}
\usepackage[T1]{fontenc} 
\usepackage[latin1]{inputenc} 
\usepackage{mathptmx} 
\usepackage{lmodern} 
\usepackage{amsthm} 
\usepackage{amsmath} 
\usepackage{amsfonts} 
\usepackage{amssymb} 
\usepackage{mathrsfs}
\usepackage[left=3.10cm, right=3.10cm, top=3.10cm, bottom=3.10cm]{geometry} 
\usepackage[ngerman,german,english]{babel} 
\usepackage{paralist} 
\usepackage[colorlinks=true]{hyperref}
\hypersetup{linktocpage}
\usepackage{etoolbox}


\setkomafont{sectionentry}{\normalsize}
\setcounter{tocdepth}{2}
\RedeclareSectionCommand[tocbeforeskip=1pt]{section}


\setkomafont{section}{\normalfont\Large\textbf}
\setkomafont{subsection}{\normalfont\large\textbf}
\setkomafont{paragraph}{\normalfont\textbf}


\renewenvironment{abstract}
{\begin{center}
		\textbf{Abstract}
	\end{center}
	\list{}{ 
		\setlength{\leftmargin}{0.05\textwidth}
		\setlength{\rightmargin}{\leftmargin}
	}
	\item\relax} 
{\endlist}

\newenvironment{keywords}
{\begin{trivlist}\item[]{\bfseries Keywords.}}
	{\end{trivlist}}

\newenvironment{subclass}
{\begin{trivlist}\item[]{\bfseries Mathematical Subject Classification:}}
	{\end{trivlist}}

\newenvironment{acknowledgements}
{\begin{trivlist}\item[]{\bfseries Acknowledgements:}}
	{\end{trivlist}}



\theoremstyle{plain} 
\newtheorem{thm}{Theorem}[section] 

\newtheorem{lem}[thm]{Lemma}
\newtheorem{pro}[thm]{Proposition}

\theoremstyle{definition}
\newtheorem{defi}[thm]{Definition}
\newtheorem{rem}[thm]{Remark}


\newcommand{\N}{\mathbb{N}}

\newcommand{\R}{\mathbb{R}}

\newcommand{\norm}[2][]{\left\|#2\right\|_{#1}}
\newcommand{\tnorm}[2][]{||| #2|||_{#1}}

\newcommand{\dualbra}[2]{\left\langle #1,#2 \right\rangle}

\newcommand{\Lpnorm}[2][]{\ifthenelse{\equal{#1}{}}{\norm{#2}_{L^p}}{\norm{#2}_{L^p(#1)}}}
\newcommand{\Hknorm}[2][]{\ifthenelse{\equal{#1}{}}{\norm{#2}_{H^k}}{\norm{#2}_{H^k(#1)}}}

\newcommand{\set}[1]{\left\lbrace #1 \right\rbrace}

\renewcommand{\ker}{\text{\normalfont ker\,}}
\newcommand{\trace}{\text{\normalfont tr\,}}
\newcommand{\sgn}{\mathrm{sgn}}

\renewcommand{\div}{\text{\normalfont div}}

\newcommand{\QV}[2][]{\ifthenelse{\equal{#1}{}}{\langle #1 \rangle}{\langle #1,#2 \rangle}}

\newcommand{\ind}{\mathbb{I}}

\newcommand{\Lin}{\mathscr{L}}


\makeatletter
\newcounter{author}
\renewcommand*\author[1]{%
	\stepcounter{author}%
	\ifnum\c@author=1
	\gdef\@author{#1}%
	\else
	\xdef\@author{\unexpanded\expandafter{\@author\and#1}}%
	\fi
	\csgdef{author@\the\c@author}{#1}}
\newcommand*\email[1]{%
	\csgdef{email@\the\c@author}{#1}}
\newcommand*\address[1]{%
	\csgdef{address@\the\c@author}{#1}}
\AtEndDocument{%
	\xdef\author@count{\the\c@author}%
	\c@author=1
	\print@authors}
\newcommand*\print@authors{%
	\ifnum\c@author>\author@count
	\else
	\print@author{\the\c@author}%
	\advance\c@author by 1
	\expandafter\print@authors
	\fi}
\newcommand*\print@author[1]{%
	\par\medskip
	\begin{tabular}{@{}l@{}}%
		\textsc{\csuse{author@#1}}\\
		\csuse{address@#1}\\
		\textit{E-mail address}: \csuse{email@#1}
\end{tabular}}
\makeatother


\title{\textrm{\textbf{\Large Longtime behavior for homoenergetic solutions in the collision dominated regime for hard potentials}}}

\author{\large Bernhard Kepka}
\address{University of Bonn, Institute for Applied Mathematics \\ Endenicher Allee 60 \\ D-53115 Bonn \\ GERMANY}
\email{kepka@iam.uni-bonn.de}

\date{\normalsize \today}


\begin{document}

\maketitle

\begin{abstract}
	In this paper, we consider a particular class of solutions to the Boltzmann equation which are referred to as homoenergetic solutions. They describe the dynamics of a dilute gas due to collisions and the action of either a shear, a dilation or a combination of both. We prove that solutions with initially high temperature remain close and converge to a Maxwellian distribution with temperature going to infinity. Furthermore, we give precise asymptotic formulas for the temperature. This local stability result is a consequence of a dominant shear and the homogeneity $ \gamma>0 $ of the collision operator with respect to relative velocities. The proof relies on an ansatz which is motivated by a Hilbert-type expansion. We consider both non-cutoff and cutoff kernels.
	\begin{keywords}
		Boltzmann equation, Homoenergetic solutions, Long-range interactions, Non-equilibrium, Hard potentials
	\end{keywords}
	\begin{subclass}
		35Q20, 82C40, 35C20
	\end{subclass}
	
	\begin{acknowledgements}
		The author thanks Juan J.L. Vel{\'a}zquez for the suggestion of the problem and helpful discussions. The author has been funded by the Deutsche Forschungsgemeinschaft (DFG, German Research Foundation) under Germany's Excellence Strategy -- EXC-2047/1 -- 390685813 as a member of the Bonn International Graduate School of Mathematics (BIGS). Furthermore, the work was supported by the DFG through the collaborative research centre \textit{The mathematics of emerging effects} (CRC 1060, Project-ID 211504053).
	\end{acknowledgements}
\end{abstract}

\tableofcontents
\newpage
\section{Introduction}
The inhomogeneous Boltzmann equation is given by 
\begin{align}\label{eq:inhomBoltzmannEq}
	\partial_t f + v\cdot \nabla_x f = Q(f,f),
\end{align}
where $ f=f(t,x,v):[0,\infty)\times\R^3\times\R^3\rightarrow[0,\infty) $ denotes the one-particle distribution of a dilute gas in whole space. In this paper, we will restrict ourselves to the physically most relevant case of three dimensions, although our study can be extended to dimensions $ N\geq 3 $ without any additional difficulties.

In \eqref{eq:inhomBoltzmannEq} the bilinear collision operator has the form
\begin{align*}
	Q(f,g) = \int_{\R^3}\int_{S^2}B(|v-v_*|,n\cdot \sigma)(f'_*g'-f_*g)d\sigma dv_*,
\end{align*}
where $ n=(v-v_*)/|v-v_*| $ and $ f'_* = f(v'_*) $, $ g' = g(v') $, $ f_*=f(v_*) $, with the pre-collisional velocities $ (v,v_*) $ resp. post-collisional velocities $ (v',v'_*) $. Here, we use the $ \sigma $-representation of post-collisional velocities, i.e. for $ \sigma \in S^2 $
\begin{align*}
	\begin{split}
		v' = \dfrac{v+v_*}{2}+\dfrac{|v-v_*|}{2}\sigma, \qquad
		v'_* = \dfrac{v+v_*}{2}-\dfrac{|v-v_*|}{2}\sigma.
	\end{split}
\end{align*}
Recall that the collision operator satisfies
\begin{align*}
	\int_{\R^3} Q(f,f) \varphi(v)dv=0, \quad \varphi(v)=1,\,v_1,\, v_2,\, v_3, \, |v|^2
\end{align*}
which correspond to the conservation of mass, momentum and energy. We refer to \cite{Cercignani1988BoltzmannEqApplication,Villani2002Review} for an introduction into the physical and mathematical theory of the Boltzmann equation \eqref{eq:inhomBoltzmannEq}.

The collision kernel is given by $ B(|v-v_*|,n\cdot \sigma) $ and it can be obtained from an analysis of the binary collisions of the gas molecules. For instance, power law potentials $ 1/r^{q-1} $ with $ q>2 $ lead to (see e.g. \cite[Sec. II.5]{Cercignani1988BoltzmannEqApplication})
\begin{align}\label{eq:CollisinKernel}
	B(|v-v_*|,n\cdot \sigma)=|v-v_*|^{\gamma}\, b(n\cdot \sigma), \quad \gamma=(q-5)/(q-1),
\end{align}
where $ b:[-1,1)\rightarrow [0,\infty) $ has a non-integrable singularity of the form
\begin{align}\label{eq:angularSing}
	\sin\theta \, b(\cos\theta) \sim \theta^{-1-2/(q-1)}= \theta^{-1-2s}, \quad \text{as } \theta\to 0, \quad s=\dfrac{1}{q-1},
\end{align}
where $ \cos\theta = n\cdot \sigma $, with $ \theta $ being the deviation angle. It is customary to classify the collision kernels according to their homogeneity $ \gamma $ with respect to relative velocities $ |v-v_*| $. There are three cases: \textit{hard potentials} $ (\gamma>0) $, \textit{Maxwell molecules} $ (\gamma=0) $ and \textit{soft potentials} ($ \gamma<0 $). Furthermore, collision kernels with an angular singularity of the form \eqref{eq:angularSing} are called non-cutoff kernels. This singularity reflects the fact that for power law interactions the average number of \textit{grazing collisions}, i.e. collisions with $ v\approx v' $, diverges. In the main part of the paper, we consider non-cutoff collision kernels for hard potentials $ \gamma>0 $.

In this paper, we study a particular class of solutions to \eqref{eq:inhomBoltzmannEq}, namely the so-called \textit{homoenergetic solutions}, which have been analyzed in particular in \cite{VelazqJames2019LongAsympHomoen}. There conjectures for the longtime asymptotics of solutions have been formulated. Here, we give a rigorous proof of these conjectures in the case of hard potentials $ \gamma>0 $.

\subsection{Homoenergetic solutions}
Our study concerns solutions to \eqref{eq:inhomBoltzmannEq} with the form
\begin{align}\label{eq:ansatzHomoenergSol}
	f(t,x,v)=g(t,v-L(t)x), \quad w=v-L(t)x,
\end{align} 
for $ L(t)\in\R^{3\times 3} $ and a function $ g=g(t,w):[0,\infty)\times\R^3\rightarrow[0,\infty) $ to be determined. One can check that in general, solutions to \eqref{eq:inhomBoltzmannEq} with the form \eqref{eq:ansatzHomoenergSol} exist for a large class of functions $ g $ if and only if $ g $ and $ L $ satisfy
\begin{align}\label{eq:homenergBE}
\begin{split}
	\partial_t g - L(t)w\cdot\nabla_w g &= Q(g,g), \\
	\dfrac{d}{dt}L(t) + L(t)^2 &= 0.
\end{split}
\end{align}
The second equation is used to reduce the variables $ (t,x,v) $ to $ (t,w) $. In particular, the collision operator acts on $ g $ only through the variable $ w $. The second equation can be solved explicitly $ L(t)=L(0)(I+tL(0))^{-1} $. Note that the inverse matrix might not be defined for all times, although this situation will not be considered here.

Solutions to \eqref{eq:homenergBE} are called \textit{homoenergetic solutions} and were introduced by Galkin 
\cite{GalkinOnSolKineticEqBoltzmann} and Truesdell \cite{Truesdell1956PressureFluxII}. They studied their properties in the case of Maxwell molecules via moment equations, a method known since the work by Truesdell and Muncaster \cite{TruesdellMuncasterFundamentalsMaxwell}, see \cite{Galkin1958ClassOfSolution,Galkin1964OneDimUnsteadySol,GalkinOnSolKineticEqBoltzmann,Galkin1966ExactSol,Truesdell1956PressureFluxII}. More recently, this method has also been used in \cite{GarzoSantos2003KineticTheory} (and references therein) for homoenergetic solutions of the Boltzmann equation as well as other kinetic models like BGK. The case of mixtures of gases has been studied there as well.

The well-posedness of \eqref{eq:homenergBE} in the case cutoff hard potentials was proved by Cercignani \cite{Cercignani1989ExistenceHomoenergetic}. See also \cite{Cercignani2001ShearFlowGranularMaterial,CercignaniBoltzmannApproachShearFlow} for a study of shear flow for granular media. Homoenergetic solutions for the two-dimensional Boltzmann equation as well as for a class of Fokker-Planck equations have been studied in \cite{Matthies2019}.

A systematic analysis of the large time behavior of solutions to \eqref{eq:homenergBE} for kernels with arbitrary homogeneities has been undertaken in \cite{BobylevVelazq2020SelfsimilarAsymp,VelazqJames2019LongAsympHomoen,JamesVelazq2019SelfsimilarProfilesHomoenerg,JamesVelazq2019LongtimeAsymptoticsHypDom}. In particular, self-similar solutions have been studied in \cite{BobylevVelazq2020SelfsimilarAsymp,Liu2020BoltzmannUniformShearFlow,JamesVelazq2019SelfsimilarProfilesHomoenerg} for cutoff Maxwell molecules, respectively in \cite{Kepka2021SelfSimilar} for non-cutoff Maxwell molecules. On the other hand, in \cite{VelazqJames2019LongAsympHomoen} the longtime behavior for non-Maxwellian molecules has been analyzed in the case that the collision operator is dominant over the drift term. This suggest that solutions approach the equilibrium distribution. However, since the temperature is not conserved, the equilibrium distribution has a temperature varying with time. The core of the analysis was an adaptation of a Hilbert expansion in order to determine the behavior of the temperature for large times. However, the arguments in \cite{VelazqJames2019LongAsympHomoen} are formal computations and no rigorous proofs are given.

The main contribution of this paper is to give rigorous proofs of the longtime behavior and the asymptotics of the temperature. Here, we study the case of non-cutoff interactions with hard potentials $ \gamma>0 $. As a result, we verify the conjectures in \cite{VelazqJames2019LongAsympHomoen} for such collision kernels.

\subsection{Linearized collision operator and Hilbert-type expansion}
\paragraph{Linearized collision operator.} In order to recall the formal arguments in \cite{VelazqJames2019LongAsympHomoen} let us introduce the linearized collision operator given by
\begin{align*}
		\Lin h = -Q(h,\mu)-Q(\mu,h) = -\int_{\R^3}\int_{S^2} B(|v-v_*|,n\cdot \sigma)\left[ \mu'_*h'+h'_*\mu'-\mu_* h-\mu h_* \right] \, d\sigma dv_*.
\end{align*}
Here, $ \mu $ denotes the Maxwellian
\begin{align*}
	\mu(v)= \dfrac{1}{(2\pi)^{3/2}}e^{-|v|^2/2},
\end{align*} 
which is the (up to a change of the mass, momentum and energy) unique equilibrium solution of the homogeneous Boltzmann equation. The operator $ \Lin $ on $ L^2(\mu^{-1/2}) $ or equivalently the operator $ Lh=\mu^{-1/2}\Lin(\mu^{1/2}h) $ on $ L^2(\R^3) $ has been extensively studied in the literature, see \cite{AlexandreEtAl2011GlobalExistenceFullRegBoltzmannEq,BarangerMouhot2005ExplicitSpectralGapEstimatesLinBoltzmannLandauOp,GressmanStrain2011GlobalClassicalSolBoltzmannEq,Klaus1977BoltzmannCollOp,Mouhot2006ExplicitCoercivityEstLinBoltzmannLandauOp,MouhotStrain2007SpectralGapCoercivityEstLinBoltzmannOp,Pao1974BoltzmannCollOp} and references therein. It is known that $ \Lin $ is a non-negative, self-adjoint operator on $ L^2(\mu^{-1/2}) $. Furthermore, it has a spectral gap if and only if $ \gamma +2s\geq0 $, see \cite{GressmanStrain2011GlobalClassicalSolBoltzmannEq}. Here, $ s\in(0,1) $ is measuring the angular singularity as in \eqref{eq:angularSing} and $ \gamma $ is the homogeneity of the kernel \eqref{eq:CollisinKernel} with respect to $ |v-v_*| $. The case of cutoff kernels is included by setting $ s=0 $. Let us recall that the kernel of $ \Lin $ is given by
\begin{align*}
		\ker\Lin = \text{span} \left\lbrace \mu,\,  v_1\mu, \, v_2\mu, \, v_3\mu, \, |v|^2\mu \right\rbrace.
\end{align*}
This is related to the conservation of mass, momentum and energy of the collision operator. In the case $ \gamma\in(0,1), \, s\in(0,1/2) $, one can show that $ e^{-t \Lin}h $, with $ h $ in weighted $ L^1 $-spaces, approaches $ \ker \Lin $ exponentially fast, see \cite{Tristani2014ExponentialConvergenceEquilHomogBoltz} or Lemma \ref{lem:LinearDecay} below. This relies on the more general framework in \cite{Mouhot2017FactorizationNonSymOper,Mouhot2006RateConvergenceEquilibSpatiallyHomogBoltzmannEq}.

For our analysis, the space $ L^2(\mu^{-1/2}) $, on which $ \Lin $ is self-adjoint and has a spectral gap, is inconvenient, since the exponential decay at infinity $ |v|\to\infty $ is a priori not preserved by equation \eqref{eq:homenergBE}. Thus, we make use of the result in weighted $ L^1 $-spaces. 

\paragraph{Hilbert-type expansion for homoenergetic solutions.} Our goal is to give a rigorous proof of conjectures in \cite{VelazqJames2019LongAsympHomoen} in the case $ \gamma>0 $. For $ \gamma>0 $ they considered matrices $ L_t=L_0(I+tL_0)^{-1} $ which have one of the following asymptotic forms as $ t\to \infty $, assuming $ \det(I+tL(0))>0 $ for all $ t\geq0 $:
\begin{enumerate}[(i)]
	\item Simple shear:
	\begin{align}\label{eq:SimpleShear}
		L_t = \left( \begin{array}{ccc}
			0 & K & 0 \\ 
			0 & 0 & 0 \\ 
			0 & 0 & 0
		\end{array} \right), \quad K\neq 0.
	\end{align}
	
	\item Simple shear with decaying planar dilatation/shear:
	\begin{align}\label{eq:SimpShearDecPlanarDil}
		L_t = \left( \begin{array}{ccc}
			0 & K_2 & 0 \\ 
			0 & 0 & 0 \\ 
			0 & 0 & 0
		\end{array} \right) + \dfrac{1}{1+t} \left( \begin{array}{ccc}
			0 & K_1K_3 & K_1 \\ 
			0 & 0 & 0 \\ 
			0 & K_3 & 1
		\end{array} \right) + \mathcal{O}\left( \dfrac{1}{(1+t)^2} \right) , \quad K_2\neq 0.
	\end{align}
	
	\item Combined orthogonal shear:
	\begin{align}\label{eq:CombOrthShear}
		L_t = \left( \begin{array}{ccc}
			0 & K_3 & K_2-tK_1K_3 \\ 
			0 & 0 & K_1 \\ 
			0 & 0 & 0
		\end{array} \right), \quad K_1K_3\neq 0.
	\end{align}
\end{enumerate}
The above nomenclature was used in \cite[Theorem 3.1]{JamesVelazq2019SelfsimilarProfilesHomoenerg}, where one can find all possible asymptotic forms of $ L_t $ as $ t\to \infty $ under the assumption $ \det(I+tL_0)>0 $ for all $ t\geq0 $. Note that we used the notation $ L_t $ to abbreviate the time-dependence $ L(t) $ as will be done throughout the paper.

For convenience, let us recall the formal asymptotics in \cite{VelazqJames2019LongAsympHomoen}, which can be seen as a variation of a Hilbert expansion. This allows to construct solutions to \eqref{eq:homenergBE} which, after some change of variables, remain close and converge to a Maxwellian distribution.

For the relevant rescaling let us recall the mass $ \rho_t $, momentum $ V_t $ and temperature $ T_t $ of $ g_t $, given by
\begin{align*}
	\rho_t = \int_{\R^3} g_t(w)\, dw, \quad \rho_tT_t = \int_{\R^3} w g_t(w)\, dw, \quad \rho_tT_t = \dfrac{1}{3} \int_{\R^3}\left| w-V _t\right|^2g_t(w)\, dw.
\end{align*}
(Strictly speaking, the term defining $ T_t $ above is $ 2/3 $ times the internal energy. However, this is related to the temperature via the Boltzmann constant.) Using \eqref{eq:homenergBE} one can show that
\begin{align}\label{eq:MacroscopicQuantities}
		\rho_t'= -\trace L_t \, \rho_t, \quad V_t' = -L_tV_t, \quad T_t' = - \dfrac{2}{3\rho_t} \int_{\R^3} (w-V)\cdot L_t(w-V)\, g_t(w)\, dw.
\end{align}
The first two equations can be solved explicitly. We then introduce a rescaling which sets mass to one, momentum to zero and temperature to one. Define $ f_t(v)= g_t(v\beta_t^{-1/2}+V_t)\beta_t^{-3/2}\rho_t^{-1} $, where we used the inverse temperature $ \beta_t=T_t^{-1} $. As a consequence we have
\begin{align}\label{eq:Normalization}
	\int_{\R^3} f_t(v)\, dv=1, \quad \int_{\R^3} vf_t(v)\, dv=0, \quad \dfrac{1}{3}\int_{\R^3}|v|^2 f_t(v)\, dv=1.
\end{align}
Furthermore, we obtain with \eqref{eq:homenergBE} and \eqref{eq:MacroscopicQuantities} the equations
\begin{align}\label{eq:IndtrodModelHomoengBE}
	\begin{split}
		\partial_t f &= \div \left( \left( L_t-\alpha_t\right) v \, f\right)  + \rho_t\, \beta_t^{-\gamma/2}\, Q(f,f), \quad f(0,\cdot)=f_0(\cdot),
		\\
		\beta_t &= \beta_0\exp \left( 2\int_{0}^t\alpha_s\, ds \right),
		\quad
		\alpha_t:=\dfrac{1}{3}\int v\cdot L_tv \, f_t(v)\,dv,
		\\
		\rho_t &= \exp\left( -\int_0^t \trace L_s \, ds \right).
	\end{split}
\end{align}
Note that we set $ \rho_0=1 $, whereas the initial inverse temperature is given by $ \beta_0 $. The equation for the inverse temperature is a consequence of \eqref{eq:MacroscopicQuantities} yielding
\begin{align}\label{eq:IntrodInvTempEq}
	\dfrac{\beta_t'}{2\beta_t}= \alpha_t = \dfrac{1}{3}\int v\cdot L_tv \, f_t(v)\,dv.
\end{align}
 Furthermore, observe that the momentum $ V_t $ does not appear in the evolution equation for $ f $ due to the translation invariance of \eqref{eq:homenergBE}.

Our analysis is concerned with the the longtime behavior of $ f_t $ and we consider here the collision-dominated behavior. This is the case $ \eta_t:=\rho_t\, \beta_t^{-\gamma/2}\to \infty $ as $ t\to \infty $ and the drift term is of lower order compared to the collision operator. (Note that this is an a priori assumption that has to be check a posteriori, since $ \eta_t $ depends on $ f $.) This situation suggests that $ f $ remains close and converges to equilibrium. We hence use the following ansatz, which is the Hilbert-type expansion introduced in \cite{VelazqJames2019LongAsympHomoen},
\begin{align*}
		f_t(v)= \mu(v) + h_t^{(1)}(v) + \cdots + h^{(k)}_t(v) + \cdots.
\end{align*}
We assume as $ t\to \infty $ 
\begin{align*}
		h^{(1)}\ll \mu, \quad h^{(k+1)} \ll h^{(k)}, \quad k\in\N,
\end{align*}
and we can decompose
\begin{align}\label{eq:DefAlphaDecompInversTemp}
		\alpha_t  = \dfrac{1}{3}\int v\cdot L_tv \, f_t(v)\,dv = \dfrac{\trace L_t}{3} + \alpha_t^{(1)} + \cdots, \quad \alpha_t^{(k)} := \dfrac{1}{3}\int_{\R^3} v\cdot L_tv \, h^{(k)}_t(v)\,dv.
\end{align}
As a consequence we observe $ \alpha_t^{(k+1)}\ll \alpha_t^{(k)} $ as $ t\to \infty $. We plug the above ansatz into \eqref{eq:IndtrodModelHomoengBE} and collect terms of equal order (one has to take into account that $ \eta_t = \rho_t\, \beta_t^{-\gamma/2}\to \infty $ as $ t\to\infty $). The first order term $ h^{(1)} $ satisfies 
\begin{align*}
		0 = \div\left( \left( L_t - \dfrac{1}{3}\trace L_t  \, I\right) v\, \mu \right) + \eta_t \Lin h_t^{(1)}.
\end{align*}
One can show that the first term on the right-hand side is orthogonal to $ \ker \Lin $ w.r.t. the scalar product in $ L^2(\mu^{-1/2}) $. Hence, we can invert $ \Lin $ and obtain
\begin{align}\label{eq:HilbertExpFirstOrder}
		h_t^{(1)} = -\dfrac{1}{\eta_t} \Lin^{-1} \left[ v\cdot  A_t v \, \mu \right], \quad A_t := L_t - \dfrac{1}{3}\trace L_t  \, I.
\end{align}
Furthermore, we have
\begin{align}\label{eq:IntrodFirstOrderInversTemp}
		\alpha_t^{(1)} = -\dfrac{a_t}{3\eta_t}, \quad a_t:= \dualbra{v\cdot A_tv \, \mu}{ \Lin^{-1} \left[ v\cdot A_t v \, \mu\right] }_{L^2(\mu^{-1/2})}.
\end{align}
Note that $ a_t>0 $ for $ A_t\neq0 $, since $ \Lin $ is a positive operator on $ (\ker\Lin)^\perp $. We also observe that $ h_t^{(1)}= \mathcal{O}(1/\eta_t) $, recalling $ \eta_t\to \infty $, so that $ h_t^{(1)} \ll \mu $ as $ t\to \infty $. Similarly, one can formally solve the equations for $ h^{(k)} $ and conclude $ h^{(k)} = \mathcal{O}(1/\eta_t^k) $. In each equation the term $ \alpha_t^{(k)} $ allows to invert the operator $ \Lin $ on $ (\ker\Lin)^\perp $ as for $ k=0 $ above. Hence, the functions $ \alpha_t^{(k)} $ can be interpreted as Lagrange multipliers.

Finally, we need to show a posteriori that $ \eta_t = \rho_t\beta_t^{-\gamma/2} $ as $ t\to \infty $. As was observed in \cite{VelazqJames2019LongAsympHomoen} this is possible for $ L_t $ given by \eqref{eq:SimpleShear}, \eqref{eq:SimpShearDecPlanarDil} or \eqref{eq:CombOrthShear}. Let us consider here the case of simple shear \eqref{eq:SimpleShear}, so that $ L_t=L_0 $ is constant in time and $ \rho_t\equiv 1 $, due to $ \trace L_t =0 $. We use \eqref{eq:IntrodInvTempEq} and \eqref{eq:IntrodFirstOrderInversTemp} to obtain for $ \eta_t=\beta_t^{-\gamma/2} $
\begin{align*}
		\eta_t ' = \dfrac{\gamma a_0}{3} + \mathcal{O}(1/\eta_t).
\end{align*}
Here, we used that $ \alpha_t^{(k)}= \mathcal{O}(1/\eta_t^{k}) $ for the terms with $ k\geq 2 $. Hence, we get 
\begin{align*}
		\eta_t = \dfrac{\gamma a_0}{3} t+ o(t),
\end{align*}
i.e. $ \eta_t \to \infty $ as $ t\to \infty $, and thus
\begin{align*}
		\beta_t = \dfrac{\gamma a_0}{3} t^{-2/\gamma}(1+o(1)).
\end{align*}
The temperature $ T_t $ goes to infinity like $ t^{2/\gamma} $. 

Below we give a rigorous proof of the above longtime behavior $ f_t\to \mu $ and exact asymptotic formulas for the temperature in all the cases \eqref{eq:SimpleShear}, \eqref{eq:SimpShearDecPlanarDil} and \eqref{eq:CombOrthShear}. There are two crucial assumptions that we use. We assume that initially $ f_0 $ is close enough to a Maxwellian and that the initial temperature $ T_0=\beta_0^{-1} $ is sufficiently large. The latter condition ensures that the collision operator is dominant for all times, due to the term $ \rho_t\beta^{-\gamma/2}_t $.

\paragraph{Physical interpretation.} Let us briefly comment on the physical picture of the formal asymptotic study. To clarify the effect of the drift term $ L_tw\cdot\nabla_w g $ in \eqref{eq:homenergBE} we consider the flow $ P_t\in\R^{3\times 3} $ of $ -L_t $, that is
\begin{align*}
	P_t'=-L_tP_t, \quad P_0=I, \quad \det P_t = \exp\left( -\int_0^t\trace L_s \, ds \right).
\end{align*} 
The sign of $ \trace L_t $ determines whether we have expansion or dilatation in velocity space. In the case of homoenergetic solutions, we always have $ \trace L_t\geq0 $ to highest order. Thus, the dilatation would lead to a decrease of velocities and hence the temperature. But there is also the shearing effect due to the trace-free part $ A_t $ of $ L_t $. If the temperature is already high enough ($ \beta_0 $ small), most particles have very large velocities and the shear leads to an increase of them.

In total we have two competing effects. Both are present in the zeroth and first order terms in the formula for the inverse temperature \eqref{eq:IntrodInvTempEq}. More precisely, we have with \eqref{eq:DefAlphaDecompInversTemp} and \eqref{eq:HilbertExpFirstOrder}
\begin{align*}
	\left( \beta_t^{-\gamma/2} \right)' = -\gamma\,  \alpha_t \,  \beta_t^{-\gamma/2} \approx -\gamma \left( \alpha_t^{(0)}+\alpha_t^{(1)} \right)  \beta_t^{-\gamma/2} =   -\dfrac{\gamma\,  \trace L_t}{3} \beta_t^{-\gamma/2} + \dfrac{\gamma a_t}{3 \rho_t}.
\end{align*}
The shear is present through the term $ a_t $ given in \eqref{eq:IntrodFirstOrderInversTemp}, which only depends on $ A_t $. The dilatation is visible through $ \trace L_t\geq0 $. In the case that $ L_t $ is given by \eqref{eq:SimpleShear}, \eqref{eq:SimpShearDecPlanarDil} or \eqref{eq:CombOrthShear}, the shear is always more dominant than the dilatation. As a result we have $ \beta_t^{-1}=T_t\to\infty $ as $ t\to \infty $.

\subsection{Main results}
\paragraph{Notation.} Let us define the weighted $ L^p $-spaces $ L^p(w) $ with a positive weight function $ w:\R^3\rightarrow (0,\infty) $ with norm
\begin{align*}
		\norm[L^p(w)]{f}:=\norm[L^p]{w\,f}.
\end{align*}
For $ p=2 $ the scalar product is given by
\begin{align*}
		\dualbra{f}{g}_{L^2(w)}=\int_{\R^3}f(v)\, g(v)\, w(v)^2\, dv.
\end{align*}
In the case $ w(v)=\left\langle v  \right\rangle^m  $, $ m\geq0 $, we abbreviate them by $ L^p_m $, where $ \left\langle  v \right\rangle:=\sqrt{1+|v|^2} $. We also use the corresponding weighted Sobolev spaces $ W^{k,p}_m(\R^3) $, $ k\in\N_0 $, $ p\geq1 $. More precisely, $ f\in W^{k,p}_m $ if and only if the weak derivatives of order less or equal $ k\in\N_0 $ exist $ \partial^\alpha f $, for any multi-index $ \alpha\in\N^3_0 $, $ |\alpha|=\alpha_1+\alpha_2+ \alpha_3\leq k $ and
\begin{align*}
	\norm[W^{k,p}_m]{f}^p:=\sum_{|\alpha|\leq k} \norm[L^p_m]{\partial^\alpha f}^p
\end{align*}
is finite. Let us also define the norm of the homogeneous fractional Sobolev space $ \dot{H}^s $ for $ s\geq 0 $ by
\begin{align*}
	\norm[\dot{H}^s]{f}^2:= \int_{\R^3} |\xi|^{2s}|\mathscr{F}[f](\xi)|^2\, d\xi,
\end{align*}
where $ \mathscr{F}[f] $ is the Fourier transform. The norm of the inhomogeneous Sobolev space $ H^s $ for $ s\geq0 $ is defined by
\begin{align*}
	\norm[H^s]{f}^2= \norm[L^2]{f}^2+\norm[\dot{H}^s]{f}^2.
\end{align*}
Recall that $ W^{k,2}=H^k $ for $ k\in\N $ and we use both definitions interchangeably. The corresponding spaces with weights $ \left\langle v \right\rangle^m  $ are denoted by $ \dot{H}^s_m $ respectively $ H^s_m $ and the norms are given by
\begin{align*}
	\norm[\dot{H}^s_m]{f} = \norm[\dot{H}^s]{\left\langle \cdot \right\rangle^m\, f}, \quad \norm[H^s_m]{f} = \norm[H^s]{\left\langle \cdot \right\rangle^m\, f}.
\end{align*}
Finally, we write $ A\lesssim B $ resp. $ A\gtrsim B $, if there is a positive constant $ C>0 $ with $ A\leq CB $ resp. $ CA\geq B $. We write $ A\approx B $ if both $ A\lesssim B $ and $ A\gtrsim B $. 

\paragraph{Assumptions on the kernel.} We make the following assumptions.
\begin{itemize}
	\item (A-1) The collision kernel has the product form
	\begin{align*}
		B(v-v_*,\sigma)= b(n\cdot \sigma)|v-v_*|^\gamma.
	\end{align*}
	\item (A-2) The function $ b:[-1,1)\rightarrow[0,\infty) $ is locally smooth and has the angular singularity
	\begin{align}\label{eq:AssumptCorssSectionNonCutoff}
		\sin\theta \, b(\cos\theta) \, \theta^{1+2s}\to K_b>0, \quad \text{as } \theta\to 0
	\end{align}
	for some $ s\in(0,1/2) $ and $ K_b>0 $.
	\item (A-3) The parameter $ \gamma $ satisfies $ \gamma\in(0,1) $.
\end{itemize}
In particular, this implies
\begin{align}\label{eq:AssumptCrossSection}
	\Lambda =\int_0^{\pi} \sin\theta \, b(\cos\theta) \, \theta d\theta <\infty.
\end{align}
These assumptions cover inverse power law interactions with $ q>5 $, cf. \eqref{eq:CollisinKernel} and \eqref{eq:angularSing}. Finally, we also assume without loss of generality that $ b(\cos\theta) $ is supported on $ [0,\pi/2] $ by using the symmetrization
\begin{align*}
	b(n\cdot \sigma)\ind_{\set{n\cdot \sigma \geq 0}}+ b(-n\cdot \sigma)\ind_{\set{n\cdot \sigma \geq 0}}.
\end{align*}
This does not change the collision operator $ Q(f,f) $, since $ f(v')f(v'_*) $ is invariant under the change of variables $ \sigma \mapsto -\sigma $.

\paragraph{Results for homoenergetic solutions with collision-dominated behavior.} We consider solutions $ g $ to \eqref{eq:homenergBE} with initial mass $ \rho_0=1 $, momentum $ V_0\in \R^3 $ and inverse temperature $ \beta_0>0 $. They are related to solutions $ f_t(v)=g_t(v\beta_t^{-1/2}+V_t)\beta_t^{-3/2}\rho_t^{-1} $ to the equations \eqref{eq:IndtrodModelHomoengBE}. Here, we used
\begin{align}\label{eq:MassMomentum}
		\rho_t = \exp\left( -\int_0^t\trace L_s \, ds \right), \quad V_t = P_tV_0, \quad \dfrac{1}{\beta_t} = \dfrac{1}{3\rho_t} \int_{\R^3}\left| w-V _t\right|^2g_t(w)\, dw
\end{align} 
with $ t\mapsto P_t\in \R^{3\times3} $ satisfying $ P_t'=-L_tP_t $, $ P_0=I $. The well-posedness and regularity theory of these equations is discussed in Section \ref{sec:WellPosedness}, see Proposition \ref{pro:Wellposedness} and Proposition \ref{pro:Regularity}. For our main results we use the following weighted Sobolev space $ \mathcal{H}^1_p $ with norm
\begin{align*}
	\norm[\mathcal{H}^1_p]{f}^2:= \norm[L^2_p]{f}^2 + \sum_{|\alpha|= 1}\norm[L^2_{p-2s}]{\partial^\alpha f}^2,
\end{align*}
where $ s\in(0,1/2) $ is given in \eqref{eq:AssumptCorssSectionNonCutoff}.
\begin{thm}\label{thm:HomoenergticSol1}
	Consider equation \eqref{eq:homenergBE} with matrix $ L_t =L_0(I+tL_0)^{-1}  $ having the asymptotic form \eqref{eq:SimpleShear}, \eqref{eq:SimpShearDecPlanarDil} or \eqref{eq:CombOrthShear}. Let $ p_0> 4+4s +3/2 $ be arbitrary and $ g_0\in \mathcal{H}^1_{p_0} $. Consider the unique solution $ g $ to \eqref{eq:homenergBE}. Define with \eqref{eq:MassMomentum}
	\begin{align}\label{eq:HilbertExpFirstOrder2}
		f_t(v):=g_t(v\beta_t^{-1/2}+V_t)\beta_t^{-3/2}\rho_t^{-1}, \quad  \bar{\mu}_t := \dfrac{1}{\rho_t\beta_t^{-\gamma/2}}\Lin^{-1}\left[ -v\cdot A_t v \mu \right], \quad A_t:= L_t-\dfrac{\trace L_t}{3}\, I
	\end{align} 
	and $ h_t(v) := f_t(v)-\mu(v)-\bar{\mu}_t(v) $. 
	
	There are $ \varepsilon_0\in(0,1) $ sufficiently small and a constant $ C'>0 $, depending only on $ p_0 $, $ L_t $ and the collision kernel $ B $, such that: If $ \norm[\mathcal{H}^1_{p_0}]{h_0}=\varepsilon\leq \varepsilon_0 $ and $ \beta_0\leq \varepsilon_0 $, we have 
	\begin{align}\label{eq:ConvergenceToEqui}
		\norm[\mathcal{H}^1_{p_0}]{f_t-\mu}\to 0
	\end{align}
	as $ t\to \infty $. Furthermore, the inverse temperature has the following asymptotics in each case.
	
	\begin{enumerate}[(i)]
		\item Simple shear, $ L_t $ given by \eqref{eq:SimpleShear}: We have 
		\begin{align}\label{eq:AsymptoticsInversTempSimpleShear}
			\lim_{t\to\infty} \dfrac{\beta_t^{-\gamma/2}}{t}=\dfrac{\gamma \bar{a}}{3}
		\end{align}
		with the constant $ \bar{a}>0 $ given by
		\begin{align}\label{eq:AsymptoticsSolCorollary1Constant}
			\bar{a}:=\dualbra{v\cdot A^0v\, \mu}{\Lin^{-1}\left[ v\cdot A^0v \, \mu \right] }_{L^2(\mu^{-1/2})} , \quad A^0 := \left( \begin{array}{ccc}
				0 & K & 0 \\ 
				0 & 0 & 0 \\ 
				0 & 0 & 0
			\end{array} \right).
		\end{align}
		
		\item Simple shear with decaying planar dilatation/shear, $ L_t $ given by \eqref{eq:SimpShearDecPlanarDil}: We have
		\begin{align}\label{eq:AsymptoticsInversTempSimpleShearDecaying}
			\lim_{t\to\infty} \dfrac{\beta_t^{-\gamma/2}}{t^2} = \dfrac{\gamma \bar{a}}{\gamma+6}\exp\left( \int_0^\infty r_s \, ds \right),
		\end{align}
		where we defined
		\begin{align}\label{eq:AsymptoticsSolCorollary2Constant}
			r_t:= \trace L_t - \dfrac{1}{1+t} = \mathcal{O}\left( \dfrac{1}{(1+t)^2} \right), \quad t\to \infty,
		\end{align}
		and the constant $ \bar{a}>0 $ is given by
		\begin{align}\label{eq:AsymptoticsSolCorollary22Constant}
			\bar{a} := \dualbra{v\cdot A^0 v \, \mu}{\Lin^{-1}\left[ v\cdot A^0 v \, \mu \right] }_{L^2(\mu^{-1/2})}, \quad A^0 := \left( \begin{array}{ccc}
				0 & K_2 & 0 \\ 
				0 & 0 & 0 \\ 
				0 & 0 & 0
			\end{array} \right).
		\end{align}
		
		\item Combined orthogonal shear, $ L_t $ given by \eqref{eq:CombOrthShear}: It holds
		\begin{align}\label{eq:AsymptoticsInversTempCombOrthogShear}
			\lim_{t\to\infty} \dfrac{\beta_t^{-\gamma/2}}{t^3} = \dfrac{\gamma \bar{a}}{9},
		\end{align}
		where the constant $ \bar{a}>0 $ is defined by
		\begin{align}\label{eq:AsymptoticsSolCorollary3Constant}
			\bar{a}:= \dualbra{v\cdot A^0v \, \mu}{\Lin^{-1}\left[ v\cdot A^0 v \mu \right]}_{L^2(\mu^{-1/2})}, \quad A^0:= \left( \begin{array}{ccc}
				0 & 0 & -K_1K_3 \\ 
				0 & 0 & 0 \\ 
				0 & 0 & 0
			\end{array} \right).
		\end{align}
	\end{enumerate}
\end{thm}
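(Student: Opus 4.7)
The plan is to work with the perturbation $h_t := f_t - \mu - \bar{\mu}_t$ and to exploit the fact that, under the a priori assumption $\eta_t := \rho_t \beta_t^{-\gamma/2}\to\infty$, the collision operator in \eqref{eq:IndtrodModelHomoengBE} carries an arbitrarily large coefficient. Substituting $f_t = \mu + \bar{\mu}_t + h_t$, using $Q(\mu,\mu)=0$, the identity $\div(A_t v\, \mu) = -v\cdot A_t v\, \mu$ (a consequence of $\trace A_t = 0$ and $\nabla\mu = -v\mu$) and the defining relation $\eta_t\Lin\bar{\mu}_t = -v\cdot A_t v\, \mu$, the zeroth and first order terms in the Hilbert expansion cancel exactly and one obtains
\begin{align*}
	\partial_t h_t + \eta_t\Lin h_t - \div\bigl((L_t - \alpha_t I)v\, h_t\bigr) = \mathcal{R}_t,
\end{align*}
where the source $\mathcal{R}_t$ collects $-\partial_t \bar\mu_t$, the drift $\div((L_t - \alpha_t I)v\,\bar\mu_t)$, the term $(\alpha_t^{(1)}+\alpha_t^{(h)})(|v|^2-3)\mu$ from the $\mu$ piece of the drift, and the fully nonlinear contribution $\eta_t Q(\bar\mu_t + h_t,\bar\mu_t + h_t)$. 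Conservation of mass, momentum and energy ensures $h_t\in(\ker\Lin)^\perp$ for all $t\geq 0$, which is what enables the use of the coercivity of $\Lin$.

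The heart of the argument is a weighted energy estimate for $h_t$ in $\mathcal{H}^1_{p_0}$. Testing the equation against $\langle v\rangle^{2p_0}h_t$ (together with the corresponding weighted derivatives, using a weight shifted by $-2s$ to match the hypoelliptic structure of the non-cutoff operator $\Lin$), combining with the coercivity from Lemma \ref{lem:LinearDecay}, and applying the standard trilinear bounds for $Q$ yields an inequality of the form
\begin{align*}
	\dfrac{d}{dt}\norm[\mathcal{H}^1_{p_0}]{h_t}^2 + c\,\eta_t\, \norm[\mathcal{H}^1_{p_0}]{h_t}^2 \leq \dfrac{C}{\eta_t}\norm[\mathcal{H}^1_{p_0}]{\mathcal{R}_t}^2 + C\,\eta_t\,\norm[\mathcal{H}^1_{p_0}]{h_t}^3.
\end{align*}
The transport term is handled by integration by parts, and the commutators with the weight $\langle v\rangle^{2p_0}$ are absorbed thanks to the room provided by $p_0 > 4+4s+3/2$; the cross term $\eta_t Q(\bar\mu_t, h_t)$ acts as a small linear perturbation of $\Lin$ because $\eta_t\norm[\mathcal{H}^1_{p_0}]{\bar\mu_t}$ is uniformly bounded. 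Using $\norm[\mathcal{H}^1_{p_0}]{\bar\mu_t} = O(1/\eta_t)$ one estimates $\norm[\mathcal{H}^1_{p_0}]{\mathcal{R}_t} = O(1/\eta_t)$, and Young's inequality absorbs the cubic term while $\norm[\mathcal{H}^1_{p_0}]{h_t}\leq\varepsilon_0$. A Gr\"onwall-type argument then yields $\norm[\mathcal{H}^1_{p_0}]{h_t} = o(1/\eta_t)$, which combined with $\norm[\mathcal{H}^1_{p_0}]{\bar\mu_t} = O(1/\eta_t)$ gives \eqref{eq:ConvergenceToEqui}.

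The preceding estimate rests on the a priori assumption $\eta_t\to\infty$ and must be closed by a continuity argument bootstrapped from the smallness of $\beta_0$ (which makes $\eta_0$ initially large). From \eqref{eq:IntrodInvTempEq} together with $\alpha_t = \trace(L_t)/3 + \alpha_t^{(1)}+\alpha_t^{(h)}$ and $\rho_t' = -\trace(L_t)\rho_t$ one derives
\begin{align*}
	\eta_t' = -\bigl(1+\tfrac{\gamma}{3}\bigr)\trace(L_t)\,\eta_t + \dfrac{\gamma\, a_t}{3} -\gamma\,\eta_t\,\alpha_t^{(h)},
\end{align*}
with error controlled by $|\eta_t\alpha_t^{(h)}|\lesssim \eta_t\norm[\mathcal{H}^1_{p_0}]{h_t} = o(1)$. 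In case \eqref{eq:SimpleShear}, $\trace L_t=0$ and $a_t\equiv\bar a$, so $\eta_t'\to\gamma\bar a/3$, yielding \eqref{eq:AsymptoticsInversTempSimpleShear}. In case \eqref{eq:SimpShearDecPlanarDil}, $\trace L_t = 1/(1+t) + r_t$ and $a_t\to\bar a$; the resulting linear ODE has integrating factor $(1+t)^{1+\gamma/3}\exp\bigl((1+\gamma/3)\int_0^t r_s\, ds\bigr)$, which together with $\rho_t = (1+t)^{-1}\exp(-\int_0^t r_s\, ds)$ gives \eqref{eq:AsymptoticsInversTempSimpleShearDecaying}. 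In case \eqref{eq:CombOrthShear}, $\trace L_t=0$, $\rho_t\equiv 1$, and $a_t = t^2\bar a(1+o(1))$ because $A_t = L_t$ is linear in $t$ with leading matrix $A^0$, so integrating $\eta_t' = \gamma\bar a\, t^2/3+o(t^2)$ gives \eqref{eq:AsymptoticsInversTempCombOrthogShear}.

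The main obstacle will be the weighted energy estimate itself: one must reconcile the hypoelliptic weight loss intrinsic to $\Lin$ in the non-cutoff regime with the unbounded polynomial-in-$v$ transport term $(L_t - \alpha_t I)v\cdot\nabla h_t$, whose coefficient even grows linearly in $t$ in case \eqref{eq:CombOrthShear}. Ensuring that the dissipation gain from $\eta_t\Lin$ in $\mathcal{H}^1_{p_0}$ dominates both this transport and all the weight commutators is exactly where the lower bound $p_0 > 4+4s+3/2$ and the smallness of $\beta_0$ become essential, the latter in order to start the bootstrap with $\eta_t$ already large.
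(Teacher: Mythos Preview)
Your strategy---derive the $h$-equation, use the large coefficient $\eta_t$ in front of $\Lin$ to get decay, then read off the temperature asymptotics from the ODE---matches the paper's, and your computations of the asymptotics in (i)--(iii) are essentially correct. However, your energy inequality
\[
\dfrac{d}{dt}\norm[\mathcal{H}^1_{p_0}]{h_t}^2 + c\,\eta_t\, \norm[\mathcal{H}^1_{p_0}]{h_t}^2 \leq \dfrac{C}{\eta_t}\norm[\mathcal{H}^1_{p_0}]{\mathcal{R}_t}^2 + C\,\eta_t\,\norm[\mathcal{H}^1_{p_0}]{h_t}^3
\]
cannot be obtained as written, and this is the genuine gap. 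The operator $\Lin$ has a spectral gap only on $L^2(\mu^{-1/2})$; on the polynomially weighted space $L^2_{p_0}$ the correct coercivity (Lemma~\ref{lem:RegEstLinearizedCollOp}, not Lemma~\ref{lem:LinearDecay}) reads
\[
-\dualbra{\Lin h}{h}_{L^2_{p_0}}\leq -c_\delta\norm[H^{s,*}_{p_0}]{h}^2 + C_\delta\norm[L^2]{h}^2,
\]
so the energy estimate carries an extra term $+C_\delta\,\eta_t\,\norm[L^2]{h_t}^2$ on the right. Since $\eta_t\to\infty$, this cannot be absorbed into the dissipation, and the fact that $h_t\in(\ker\Lin)^\perp$ does not help: orthogonality is with respect to the $L^2(\mu^{-1/2})$ inner product, not the polynomial one.

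The paper closes this gap by interpolating $\norm[L^2]{h}\lesssim \norm[L^1]{h}^\theta\norm[\dot H^s]{h}^{1-\theta}$ and then running a \emph{second, independent} estimate for $\norm[L^1_m]{h_t}$ via Duhamel's formula and the semigroup decay of $e^{-t\Lin}$ on $L^1_m$ (Lemma~\ref{lem:LinearDecay}, due to Tristani via the factorization/enlargement framework). The $\mathcal{H}^1_{p_0}$ estimate is thus conditional on an a priori $L^1_m$ bound (Proposition~\ref{pro:PrepLoopInequality2}), the $L^1_m$ bound is in turn closed using the $\mathcal{H}^1_{p_0}$ control of the nonlinear terms (Proposition~\ref{pro:LoopInequality2}), and a coupled continuation argument ties them together. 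A secondary point: in case~(iii) the paper does not absorb the linearly growing $\norm{L_t}$ by making $\eta_t$ large, but instead performs the time change $\tau=(1+t)^2/2-1/2$ to reduce to a bounded matrix $\tilde L_\tau$ before applying Theorem~\ref{thm:MainTheorem}.
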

In fact we can give a more quantitative statement than \eqref{eq:ConvergenceToEqui}, which also verifies the formal Hilbert-type expansion discussed previously.

\begin{thm}\label{thm:HomoenergticSol2}
	Under the assumptions of Theorem \ref{thm:HomoenergticSol1} the following statements hold in addition.
	\begin{enumerate}[(i)]
		\item Simple shear, $ L_t $ given by \eqref{eq:SimpleShear}: We have for all $ t\geq0 $
		\begin{align}\label{eq:FirstSecondOrderHilbertExpDecay1}
			\norm[\mathcal{H}^1_{p_0}]{h_t}\leq C'\left( \dfrac{\varepsilon}{(1+t)^2} +\dfrac{1}{\zeta_t^2}\right), \quad \norm[H^k_p]{\dfrac{\bar{\mu}_t}{\sqrt{\mu}}} \leq \dfrac{C_{k,p}}{\zeta_t}
		\end{align}
		for all $ t\geq0 $, $ k,\, p\in\N $ and 
		\begin{align*}
			\zeta_t:=\beta_0^{-\gamma/2}+t.
		\end{align*}
	
		\item Simple shear with decaying planar dilatation/shear, $ L_t $ given by \eqref{eq:SimpShearDecPlanarDil}: For all $ t\geq0 $ we have
		\begin{align}\label{eq:FirstSecondOrderHilbertExpDecay2}
			\norm[\mathcal{H}^1_{p_0}]{h_t}\leq C'\left( \dfrac{\varepsilon}{(1+t)^2} +\dfrac{1}{\zeta_t^2}\right), \quad \norm[H^k_p]{\dfrac{\bar{\mu}_t}{\sqrt{\mu}}} \leq \dfrac{C_{k,p}}{\zeta_t} 
		\end{align}
		for all $ t\geq0 $, $ k,\, p\in\N $ and
		\begin{align*}
			\zeta_t:=\beta_0^{-\gamma/2}(1+t)^{-1-\gamma/3} + t.
		\end{align*}
		
		\item Combined orthogonal shear, $ L_t $ given by \eqref{eq:CombOrthShear}: For any $ t\geq0 $ it holds
		\begin{align}\label{eq:FirstSecondOrderHilbertExpDecay3}
			\norm[\mathcal{H}^1_{p_0}]{h_t}\leq C'\left( \dfrac{\varepsilon}{(1+t)^4}+\dfrac{1}{\zeta_t^2}\right), \quad \norm[H^k_p]{\dfrac{\bar{\mu}_t}{\sqrt{\mu}}} \leq \dfrac{C_{k,p}}{\zeta_t} 
		\end{align}
		for all $ t\geq0 $, $ k,\, p\in\N $ and
		\begin{align*}
			\zeta_t:=\beta_0^{-\gamma/2}(1+t)^{-1}+t^2.
		\end{align*}
	\end{enumerate}
\end{thm}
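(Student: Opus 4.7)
My plan is to derive an evolution equation for $h_t := f_t - \mu - \bar{\mu}_t$ in which the dominant dissipation is $\eta_t \Lin$ with $\eta_t := \rho_t \beta_t^{-\gamma/2}$, and then to close a bootstrap argument that simultaneously produces the lower bound $\eta_t \gtrsim \zeta_t$ in each of the three geometries and the claimed decay of $\|h_t\|_{\mathcal{H}^1_{p_0}}$. Inserting $f_t = \mu + \bar{\mu}_t + h_t$ into \eqref{eq:IndtrodModelHomoengBE} and using $Q(\mu,\mu) = 0$ together with $\Lin u = -Q(u,\mu) - Q(\mu,u)$, a short calculation gives
\begin{align*}
\partial_t h_t + \eta_t \Lin h_t = \mathcal{R}_t := -\partial_t \bar{\mu}_t + \bigl(\alpha_t - \tfrac{1}{3}\trace L_t\bigr)(|v|^2 - 3)\mu + \div\bigl((L_t - \alpha_t I) v (\bar{\mu}_t + h_t)\bigr) + \eta_t Q(\bar{\mu}_t + h_t, \bar{\mu}_t + h_t).
\end{align*}
The decisive cancellation $\div(A_t v \mu) = -v\cdot A_t v\, \mu = \eta_t \Lin \bar{\mu}_t$, built into the definition \eqref{eq:HilbertExpFirstOrder2} of $\bar{\mu}_t$ via $\trace A_t = 0$ and $\nabla \mu = -v\mu$, removes the $O(1)$ inhomogeneity and leaves $\mathcal{R}_t$ of size $O(1/\eta_t) + O(\|h_t\|) + O(\eta_t\|h_t\|^2)$.

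For the bound on $\bar{\mu}_t$, since $v\cdot A_t v\, \mu$ is a polynomial multiple of $\mu$ lying in $(\ker\Lin)^\perp$, standard regularity of $\Lin^{-1}$ on $(\ker\Lin)^\perp$ gives $\|\Lin^{-1}[v\cdot A_t v\,\mu]/\sqrt\mu\|_{H^k_p} \lesssim |A_t|$ for every $k,p$; combined with $\eta_t/(1+|A_t|) \gtrsim \zeta_t$, this produces the second bound in each of \eqref{eq:FirstSecondOrderHilbertExpDecay1}--\eqref{eq:FirstSecondOrderHilbertExpDecay3}. The lower bound on $\eta_t$ comes from the ODE $\eta_t' = -(\trace L_t + \gamma \alpha_t)\eta_t$ (obtained from \eqref{eq:IndtrodModelHomoengBE} and \eqref{eq:IntrodInvTempEq}), into which I substitute the Hilbert decomposition $\alpha_t = \tfrac{1}{3}\trace L_t - a_t/(3\eta_t) + \delta_t$ with $|\delta_t| \lesssim \|h_t\|_{\mathcal{H}^1_{p_0}} + 1/\eta_t^2$. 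Integrating case by case using the explicit asymptotics of $\trace L_t$ and $a_t$, I would obtain both $\eta_t \gtrsim \zeta_t$ and the limits \eqref{eq:AsymptoticsInversTempSimpleShear}, \eqref{eq:AsymptoticsInversTempSimpleShearDecaying}, \eqref{eq:AsymptoticsInversTempCombOrthogShear} of Theorem \ref{thm:HomoenergticSol1}, conditional on the bootstrap hypothesis.

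For the decay of $h_t$ itself, the normalization \eqref{eq:Normalization} together with $\bar{\mu}_t \in \ran \Lin^{-1}$ forces $h_t \perp \ker \Lin$ at all times, allowing the exponential decay $\|e^{-\tau \Lin}\|_{\mathcal{H}^1_{p_0} \to \mathcal{H}^1_{p_0}} \lesssim e^{-\lambda \tau}$ on $(\ker \Lin)^\perp$ (cf.\ the weighted-$L^1$ framework of \cite{Tristani2014ExponentialConvergenceEquilHomogBoltz, Mouhot2017FactorizationNonSymOper} and Lemma \ref{lem:LinearDecay}). Applying Duhamel with the time-changed semigroup $e^{-(\tau(t)-\tau(s)) \Lin}$, where $\tau(t) := \int_0^t \eta_s\, ds$, and estimating the individual pieces of $\mathcal{R}_s$ in $\mathcal{H}^1_{p_0}$, one obtains an integral inequality that closes by a standard continuity argument, producing the rates $\varepsilon/(1+t)^{2}$, $\varepsilon/(1+t)^{2}$, $\varepsilon/(1+t)^{4}$ (coming from $e^{-\lambda \tau(t)}$ with $\tau(t) \sim t^2, t^2, t^4$ respectively) and $1/\zeta_t^2$ (from integrating the $O(1/\eta_s)$ source against the semigroup). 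The main obstacle will be precisely this self-consistent closure: $\mathcal{R}_t$ depends on $h_t$ both linearly (through $\alpha_t$) and quadratically (through $\eta_t Q(h_t,h_t)$), while the ODE for $\eta_t$ itself involves $\|h_t\|_{\mathcal{H}^1_{p_0}}$. Smallness of $\varepsilon$ controls the nonlinear terms, and smallness of $\beta_0$ (equivalently, largeness of $\eta_0 = \beta_0^{-\gamma/2}$) ensures the collision operator dominates the drift from the start so that $\eta_t \gtrsim \zeta_t$ is never lost; the weight condition $p_0 > 4 + 4s + 3/2$ is dictated by $\mathcal{H}^1_{p_0}$-continuity of $Q$ in the non-cutoff setting and by handling the unbounded drift $\div((L_t - \alpha_t I) v h_t)$ in the energy estimate.
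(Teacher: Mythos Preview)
Your overall architecture---derive the $h$-equation, exploit the cancellation built into $\bar\mu_t$, control $\eta_t$ via its ODE, and close a bootstrap---matches the paper. But your proposed closure mechanism has a genuine gap: you plan to run Duhamel directly in $\mathcal{H}^1_{p_0}$, estimating $\|\mathcal{R}_s\|_{\mathcal{H}^1_{p_0}}$ and invoking semigroup decay there. This fails for the drift piece $\div\bigl((L_t-\alpha_t I)v\, h_t\bigr)$, since its $\mathcal{H}^1_{p_0}$-norm requires control of \emph{second} derivatives of $h_t$ (and one extra moment), which the bootstrap does not provide. Moreover, Lemma~\ref{lem:LinearDecay} gives exponential decay of $e^{-\tau\Lin}$ only in $L^1_m$; the paper never claims the analogous statement in $\mathcal{H}^1_{p_0}$.

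The paper resolves this with a two-tier scheme. First (Proposition~\ref{pro:PrepLoopInequality2}) it runs a differential energy estimate in $\mathcal{H}^1_{p_0}$, where the drift term is integrated by parts and the dissipation is captured via the anisotropic coercivity of Lemma~\ref{lem:RegEstLinearizedCollOp}; this produces the bound \eqref{eq:PropH1Est} \emph{conditional} on an $L^1_m$ bound \eqref{eq:PropAssL1} (the interpolation Lemma~\ref{lem:Interpolation} trades the residual $\|h\|_{L^2}$ for $\|h\|_{L^1}$). Second (Proposition~\ref{pro:LoopInequality2}) it runs Duhamel in $L^1_m$ using Lemma~\ref{lem:LinearDecay}; here the drift term costs only $\|h\|_{W^{1,1}_{m+1}}\lesssim\|h\|_{\mathcal{H}^1_{p_0}}$, which is available from the first step. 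The two propositions feed each other in a single continuation argument. Your proposal collapses these two levels into one and thereby loses the mechanism that absorbs the derivative loss from the drift.

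A smaller point: in case~(iii) the matrix $L_t$ in \eqref{eq:CombOrthShear} grows linearly, so assumption~$(I)$ of Theorem~\ref{thm:MainTheorem} fails as stated. The paper handles this by the time change $\tau=(t+1)^2/2-1/2$, which converts the problem into one with bounded $\tilde L_\tau$ and $\nu_\tau=(2\tau+1)^{-1/2}$; the quartic rate $\varepsilon/(1+t)^4$ and the form of $\zeta_t$ in \eqref{eq:FirstSecondOrderHilbertExpDecay3} then emerge after undoing this substitution. Your plan does not address how to make the estimates uniform when $\|L_t\|\to\infty$.
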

\begin{rem} Let us give a few comments.
	\begin{enumerate}[(i)]
		\item Note that the estimates for $ h_t $ in \eqref{eq:FirstSecondOrderHilbertExpDecay1} respectively \eqref{eq:FirstSecondOrderHilbertExpDecay2} contain a quadratic decay whereas in \eqref{eq:FirstSecondOrderHilbertExpDecay3} the decay is of order four. Similarly, for $ \bar{\mu}_t $ the decay is faster in \eqref{eq:FirstSecondOrderHilbertExpDecay3}. The reason is that in this case the matrix $ L_t $ is given by \eqref{eq:CombOrthShear}, for which the shear is growing linearly in time. Since the shear is the driving mechanism for the temperature to grow, the process is accelerated.
		
		\item As we will see in Section \ref{sec:CollDomBehav}, the equation satisfied by $ h $ contains a source term, see \eqref{eq:ErrorEq}. This term leads to a decay of order $ 1/\zeta_t^2 $. The other terms contain $ h $ and imply a decay of order $ \varepsilon/(1+t)^2 $. This yields the particular form of the above estimates. Note that for $ t $ of order $ \beta_0^{-\gamma/2} $ the term $ 1/\zeta_t^2 $ is larger than $ \varepsilon/(1+t)^2 $ and $ \norm{h_t}\lesssim 1/(1+t)^2 $. For times of order one, it depends on the relative size of $ \varepsilon, \, \beta_0^{\gamma/2} $ to see which term is larger.
		
		\item Our estimates on the perturbation $ h $, in particular the use of the norm $ \norm[\mathcal{H}^1_{p_0}]{\cdot} $, rely on results in \cite{HerauTononTristani2020RegEstCauchyInhomBoltzmann}, where polynomially decaying solutions to the inhomogeneous Boltzmann equation close to equilibrium have been constructed. However, we combine them with the stability of $ \Lin $ in $ L^1_m $, for some $ m>2 $, proved in \cite{Tristani2014ExponentialConvergenceEquilHomogBoltz}.
		
		\item In the main part of the paper, we consider the non-cutoff case. In Section \ref{sec:Cutoff} we discuss a variant of the above theorem in the cutoff case. The proof follows the main arguments in Section~\ref{sec:CollDomBehav}, but uses less technical estimates on the collision operator.
	\end{enumerate}
\end{rem}
The paper is organized in the following way. In Section \ref{sec:WellPosedness}, we show well-posedness and regularity for equation \eqref{eq:homenergBE} with general initial data. Then, we prove the more general Theorem \ref{thm:MainTheorem} based on regularity estimates on the level of the linearization $ h $ in Section \ref{sec:CollDomBehav}. In Section \ref{sec:Application}, we conclude Theorem \ref{thm:HomoenergticSol1} and Theorem \ref{thm:HomoenergticSol2} as an application of Theorem \ref{thm:MainTheorem}. Finally, in Section \ref{sec:Cutoff} we discuss how the above theorems can be proved in the cutoff case.

\section{Well-posedness and regularity for homoenergetic solutions}
\label{sec:WellPosedness}
In this section, we study existence, uniqueness and regularity of solutions $ f $ to
\begin{align}\label{eq:homenergBEWellPosReg}
	\partial_tf = L_tv\cdot \nabla f_t+Q(f,f).
\end{align} 
The matrix $ L_t $ is given, not necessarily satisfying \eqref{eq:homenergBE}.

Let us introduce the notion of weak solutions to \eqref{eq:homenergBEWellPosReg}, which is reminiscent of weak solutions to the homogeneous Boltzmann equation. Recall that the entropy $ H(f) $ of some function $ f\geq0 $ is given by
\begin{align*}
	H(f) = \int_{\R^3} f(v) \log f(v)\, dv.
\end{align*}

\begin{defi}
	Let $ f_0\in L^1_2 $ with $ H(f_0)<\infty $. We say that $ f\in L^\infty_{\mathrm{loc}}([0,\infty);L^1_2) $, $ f\geq0 $ is a weak solution to \eqref{eq:homenergBEWellPosReg} if for all $ T\geq0 $ and all test functions $ \varphi\in C^1_b([0,T]\times \R^3) $ it holds
	\begin{align}\label{eq:WeakForm}
		\begin{split}
			&\int_{\R^3}f_T(v)\varphi_T(v)\, dv -\int_{\R^3}f_0(v)\varphi_0(v)\, dv - \int_0^T \int_{\R^3}f_s\, \partial_s\varphi_s \, dsdv
			\\
			&= - \int_0^T\int_{\R^3} \div \left( L_sv\, \varphi_s \right) \, f_s(v)\, dvds +\int_0^T \dualbra{Q(f_s,f_s)}{\varphi_s}\, ds.
		\end{split}
	\end{align}
	Furthermore, $ f $ satisfies for all $ t\geq0 $
	\begin{align}\label{eq:Entropy}
		H(f_t) &\leq H(f_0)\exp\left( -\int_0^t\trace L_s\, ds\right).
	\end{align}
\end{defi}
Here, we interpret
\begin{align*}
\dualbra{Q(f_t,f_t)}{\varphi_t} = \int_{\R^3}\int_{\R^3}\int_{S^2} |v-v_*|^\gamma b(n\cdot \sigma)\, f_t(v)f_t(v_*)\, (\varphi_t(v')-\varphi_t(v)) \,d\sigma dv_*dv.
\end{align*}
This is motivated by testing $ Q(f_t,f_t) $ with $ \varphi $ and applying the pre-post-collisional change of variables. Note that
\begin{align*}
\int_{S^2}b(n\cdot \sigma)(\varphi(v')-\varphi(v))\, d\sigma\leq \Lambda\left( \sup_{|\xi|\leq \sqrt{|v|^2+|v_*|^2}}|\nabla \varphi(\xi)| \right) \, |v-v_*|,
\end{align*}
which follows from $ |v'-v|= |v-v_*|\sin(\theta/2) $ and our assumptions on $ b $, see \eqref{eq:AssumptCrossSection}. Hence, we have
\begin{align*}
|\dualbra{Q(f_t,f_t)}{\varphi}| \leq C\Lambda\, \norm[\infty]{\nabla\varphi}\norm[L^1_{1+\gamma}]{f_t}^2
\end{align*}
and the weak formulation is well-defined due to $ 1+\gamma\leq 2 $. In order to motivate \eqref{eq:Entropy} it is convenient to integrate \eqref{eq:homenergBEWellPosReg} by characteristics yielding
\begin{align*}
	\partial_t \left[ f_t(P_{0,t}v) \right] = Q(f_t,f_t)(P_{0,t}v),
\end{align*} 
where $ P_{0,t}\in \R^{3\times3} $ satisfies $ P_{0,t}'=-L_tP_{0,t} $, $ P_{0,0}=I $. We formally calculate
\begin{align*}
	\frac{d}{dt}\left[ \det(P_{0,t})^{-1}H(f_t)\right] = \int_{\R^3}Q(f_t,f_t)(P_{0,t}v)\, \log f_t(P_{0,t}v)\, dv \leq 0,
\end{align*}
which implies \eqref{eq:Entropy}.

\begin{pro}\label{pro:Wellposedness}
	Consider \eqref{eq:homenergBEWellPosReg} with $ L\in L^\infty_{\mathrm{loc}}([0,\infty);\R^{3\times3})  $. Then the following statements hold.
	\begin{enumerate}[(i)]
		\item Let $ p>2 $ be arbitrary. For any $ f_0\in L^1_{p} $ with $ H(f_0)<\infty $ there is a weak solution $ f \in L^\infty_{\mathrm{loc}}([0,\infty); L^1_{p}) $ to \eqref{eq:homenergBEWellPosReg}. Furthermore, for any $ t_0>0 $ and any $ q\in\N $, $ T\geq t_0 $
		\begin{align*}
			\sup_{t\in[t_0,T]}\norm[L^1_q]{f_t}  \leq C.
		\end{align*}
		Here, $ C $ depends on $ t_0,\, q,\, \sup_{t\in[0,T]}\norm{L_t}  $ and $ T $.
		
		\item Let $ q\geq 2 $. Then, there is at most one weak solution $ f \in L^\infty_{\mathrm{loc}}([0,\infty); W^{1,1}_{q+1+\gamma}) $ to \eqref{eq:ModelEq}.
	\end{enumerate}
\end{pro}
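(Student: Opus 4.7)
For part (i), the plan is to remove the drift term via characteristics. Let $P_{0,t}\in\R^{3\times 3}$ solve $P_{0,t}'=-L_tP_{0,t}$, $P_{0,0}=I$, and set $\tilde f_t(v):=f_t(P_{0,t}v)$. Then $\tilde f$ formally satisfies a spatially homogeneous Boltzmann equation with a time-dependent kernel $\tilde B_t(v-v_*,\sigma)=|P_{0,t}(v-v_*)|^{\gamma}\,b(n\cdot\sigma)$ (with the angular variable suitably pulled back); since $P_{0,t}$ is invertible with bounds depending only on $\sup_{[0,T]}\norm{L_t}$, the kernel $\tilde B_t$ still satisfies (A-1)--(A-3) up to time-dependent constants. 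I would then invoke the existence theory for the non-cutoff homogeneous Boltzmann equation with hard potentials (e.g.\ based on a cutoff approximation, Povzner-type moment bounds and weak compactness in $L^1_2$ using the entropy dissipation), applied to the equation for $\tilde f$, and transport the solution back. The entropy bound \eqref{eq:Entropy} is preserved by this change of variables, with the factor $\exp(-\int_0^t\trace L_s\,ds)$ coming from the Jacobian $\det P_{0,t}$.

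For the moment estimates, I would combine a Povzner-type estimate for hard potentials with the drift term. Testing \eqref{eq:homenergBEWellPosReg} against $\langle v\rangle^{2k}$, the drift contributes
\[
-\int_{\R^3} \div(L_tv\,\langle v\rangle^{2k})\,f_t\,dv,
\]
which is bounded by $C(k,\norm{L_t})\int f_t\langle v\rangle^{2k}\,dv$. The collision term yields the classical Povzner estimate
\[
\int Q(f,f)\langle v\rangle^{2k}\,dv\leq C_k\norm[L^1_{2k-2+\gamma}]{f}^2-c_k\norm[L^1_{2k+\gamma}]{f},
\]
so the higher $(2k+\gamma)$-moment dominates on the right, yielding instantaneous appearance (for $t\geq t_0>0$) of all polynomial moments, by the standard ODE comparison argument that propagates the $L^1_p$ bound from the initial datum and then bootstraps to $L^1_q$, $q\in\N$.

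For part (ii), let $f,\tilde f$ be two solutions in the stated class, set $d=f-\tilde f$ and observe
\[
\partial_t d = L_t v\cdot \nabla d + Q(d,f)+Q(\tilde f,d).
\]
I would test this against $\psi_\varepsilon=\sgn_\varepsilon(d)\langle v\rangle^{2q}$ with a smooth approximation $\sgn_\varepsilon$ of the sign, and pass to the limit $\varepsilon\to 0$. The drift contribution is bounded by $C\norm{L_t}\norm[L^1_q]{d_t}$. For the collision terms I would write them in the symmetric pre-post form
\[
\int Q(g,h)\psi\,dv=\int\int g\,h_*\Bigl[\int_{S^2} b(n\cdot\sigma)(\psi'-\psi)\,d\sigma\Bigr]\,dv_*\,dv,
\]
and use Taylor expansion $\psi(v')-\psi(v)=\nabla\psi(v)\cdot(v'-v)+O(|v-v_*|^2\sin^2(\theta/2))$, combined with the standard parity cancellation of the $\sigma$-average that kills the first-order term modulo a lower-order piece, and the integrability of $\sin^2(\theta/2)b(\cos\theta)$ due to $s<1/2$. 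The remaining contribution is then controlled by $\norm[W^{1,1}_{q+1+\gamma}]{f_t}\norm[L^1_q]{d_t}$ and $\norm[W^{1,1}_{q+1+\gamma}]{\tilde f_t}\norm[L^1_q]{d_t}$, where the weight exponent $q+1+\gamma$ accounts exactly for the loss $|v-v_*|^\gamma\cdot|v-v_*|$ in the Taylor remainder. Gronwall's lemma then forces $d\equiv 0$.

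The main obstacle is the non-cutoff singularity. In (i) it requires the Povzner estimate adapted to non-cutoff hard potentials, together with a careful treatment of the time-dependent collision kernel after the characteristic change of variables. In (ii) it is precisely the reason the assumption $W^{1,1}_{q+1+\gamma}$ is made: it is exactly the regularity needed so that the first-order Taylor expansion in the symmetrized collision term is admissible and the remainder can be absorbed after sphere-averaging. No macro not already defined in the excerpt is used.
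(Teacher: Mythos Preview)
Your plan contains two substantive gaps.

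\textbf{Part (i), existence.} The change of variables $\tilde f_t(v)=f_t(P_{0,t}v)$ does \emph{not} reduce the problem to a homogeneous Boltzmann equation with a modified kernel of the form $|P_{0,t}(v-v_*)|^\gamma b(\tilde n\cdot\tilde\sigma)$. Since $P_{0,t}$ is in general not orthogonal, the pulled-back post-collisional velocities $P_{0,t}^{-1}w'$, $P_{0,t}^{-1}w'_*$ are not of the form $(v+v_*)/2\pm |v-v_*|\sigma/2$; the collision sphere becomes an ellipsoid and the whole structure of $Q$ changes, so you cannot ``invoke the existence theory for the non-cutoff homogeneous Boltzmann equation''. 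The paper instead keeps the drift term throughout: it approximates by cutoff kernels $B_n=(|v-v_*|\wedge n)^\gamma(b\wedge n)$, uses Cercignani's existence result for the cutoff problem with drift, derives uniform-in-$n$ bounds in $L^1_p$ via the Povzner estimate (Lemma~\ref{lem:Povzner}) combined with the elementary control of the drift term (exactly as you describe for the moment part), and passes to the limit by Dunford--Pettis using the entropy bound. Your argument for the gain of moments is essentially the paper's Step~2 and is correct.

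\textbf{Part (ii), uniqueness.} Your Taylor expansion is applied to $\psi_\varepsilon=\sgn_\varepsilon(d)\langle v\rangle^{2q}$, but this function depends on the unknown $d$, so $\nabla\psi_\varepsilon$ contains $\sgn_\varepsilon'(d)\nabla d$, which blows up as $\varepsilon\to 0$; the expansion $\psi'-\psi=\nabla\psi\cdot(v'-v)+O(\dots)$ is therefore not usable here. Even if you repair this for $Q(S,D)$ via the Kato-type inequality $D'\sgn D\le |D'|$ (so that the Taylor expansion is only applied to the smooth weight $\langle v\rangle^q$), the resulting bound is $c\,\norm[L^1_{q+\gamma}]{S}\norm[L^1_{q+\gamma}]{D}$, and for $Q(D,S)$ the $L^1$ estimate (Lemma~\ref{lem:L1EstimateCollOp}) gives $c\,\norm[L^1_{q+\gamma}]{D}\norm[W^{1,1}_{q+\gamma+1}]{S}$: both carry the higher weight $q+\gamma$ on $D$, not $q$, so Gronwall in $\norm[L^1_q]{D}$ does not close. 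The paper (following Desvillettes--Mouhot) resolves this by splitting $Q=Q_{c,\varepsilon}+Q_{nc,\varepsilon}$: the cutoff part, via a Povzner-type inequality, produces a \emph{negative} term $-K\norm[L^1_{q+\gamma}]{D}$, while the non-cutoff contributions come with coefficients $c_\varepsilon\to 0$ and are absorbed into it for $\varepsilon$ small. Only after this absorption does one obtain an inequality of the form $\frac{d}{dt}\norm[L^1_q]{D}\le C\norm[L^1_q]{D}$ and conclude $D\equiv 0$. This absorption mechanism is the missing idea in your sketch.
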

To prove this proposition we recall the following version of the Povzner estimate proved in \cite[Sect. 2]{MischlerWennberg1999HomogBE}. As was noticed e.g. in \cite[Appendix]{Villani1998NewClassWeakSol}, their calculation also works in the non-cutoff case.
\begin{lem}\label{lem:Povzner}
	Let $ \varphi(v)=|v|^{2+\delta} $ for $ \delta>0 $. Then we have the following decomposition
	\begin{align*}
	\int_{S^2}b(n\cdot \sigma )\left\lbrace \varphi'_*+\varphi'-\varphi_*-\varphi \right\rbrace d\sigma = G(v,v_*)-H(v,v_*)
	\end{align*}
	with $ G,H $ satisfying
	\begin{align}
	\begin{split}\label{eq:PovznerDecomposition}
	G(v,v_*) &\leq C\Lambda(|v||v_*|)^{1+\delta/2},
	\\
	H(v,v_*) &\geq c\Lambda(|v|^{2+\delta}+|v_*|^{2+\delta}) \left( 1-\ind_{\set{|v|/2< |v_*|< 2|v|}} \right).
	\end{split}
	\end{align}
	for some $ c,C>0 $ depending on $ \delta $.
\end{lem}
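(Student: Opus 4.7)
The statement is a Povzner-type inequality and the plan is to follow the argument of \cite{MischlerWennberg1999HomogBE}, with the non-cutoff adaptation from \cite{Villani1998NewClassWeakSol}. Write $\varphi(v)=\psi(|v|^2)$ with $\psi(r)=r^{1+\delta/2}$, which is strictly convex for $\delta>0$, and note that the $\sigma$-representation yields
\begin{align*}
	|v'|^2 = \tfrac{|v|^2+|v_*|^2}{2} + \tfrac{|v-v_*|}{2}(v+v_*)\cdot\sigma, \qquad |v'_*|^2 = \tfrac{|v|^2+|v_*|^2}{2} - \tfrac{|v-v_*|}{2}(v+v_*)\cdot\sigma.
\end{align*}
Thus $|v'|^2,|v'_*|^2$ are symmetric around the midpoint $m := \tfrac{1}{2}(|v|^2+|v_*|^2)$, and so are $|v|^2,|v_*|^2$ (which correspond to $\sigma=n$). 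Taylor expansion of $\psi$ around $m$ will then measure how the collision redistributes the energy and produce the decomposition $G-H$.

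Concretely, I would parametrize $\sigma$ by a polar angle $\theta$ with $\cos\theta=n\cdot\sigma$ and an azimuthal angle $\phi$ around $n$, writing $\sigma = n\cos\theta + \tau\sin\theta$ with $\tau\in S^2\cap n^\perp$, and perform the $\phi$-integration first. The pointwise difference $\varphi'+\varphi'_*-\varphi-\varphi_*$ is only $O(\theta)$ at grazing, but its leading order is odd in $\phi$ because it is driven by the single factor $(v+v_*)\cdot\tau$; hence it cancels after $\phi$-averaging, leaving an $O(\theta^2)$ integrand. Combined with the singularity $\sin\theta\, b(\cos\theta)\sim\theta^{-1-2s}$ for $s\in(0,1/2)$, this residual quadratic gain renders the $\theta$-integral absolutely convergent, with an overall angular weight proportional to $\Lambda$ from \eqref{eq:AssumptCrossSection}. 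The mixed cross terms arising in the expansion, of the form $|v|^a|v_*|^b$ with $a+b=2+\delta$ and $a,b\geq 1$, are collected into $G$; a direct application of Young's inequality bounds them pointwise by $(|v||v_*|)^{1+\delta/2}$, yielding the bound on $G$ in \eqref{eq:PovznerDecomposition}.

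The main obstacle, and the reason for the factor $1-\ind_{\{|v|/2<|v_*|<2|v|\}}$ in the lower bound on $H$, is that when $|v|\approx|v_*|$ the midpoint $m$ is already close to the common squared speed, so the convexity gain from $\psi$ is too small to be useful and its sign cannot be controlled. Outside this comparable regime, however, one of $|v|,|v_*|$ dominates: $m$ is essentially half of its square, and the explicit formulas above show that for any non-grazing $\sigma$ the collision spreads the energy into two pieces genuinely different from $|v|^2,|v_*|^2$. Strict convexity of $\psi$ then produces a pointwise gain of order $\max(|v|,|v_*|)^{2+\delta}\sim|v|^{2+\delta}+|v_*|^{2+\delta}$, and integrating this over the non-grazing part of $S^2$ against $b$ yields a multiplicative constant of size $\Lambda$, giving the lower bound on $H$ and completing the decomposition $G-H$.
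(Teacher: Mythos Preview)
Your proposal is correct and follows exactly the approach the paper intends: the paper does not actually prove this lemma but simply cites \cite[Sect.~2]{MischlerWennberg1999HomogBE} together with the non-cutoff observation in \cite[Appendix]{Villani1998NewClassWeakSol}, and your sketch faithfully reproduces the Mischler--Wennberg convexity argument, including the key point that the $\phi$-averaging kills the $O(\theta)$ contribution so that only the finite angular moment $\Lambda$ in \eqref{eq:AssumptCrossSection} is needed.
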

\begin{proof}[Proof of Proposition \ref{pro:Wellposedness}]
	\textit{(i)} To prove existence we first consider the case of angular cutoff, as in the analysis of the homogeneous Boltzmann equation, see e.g. \cite{Villani1998NewClassWeakSol}.
	
	\textit{Step 1:} Let us consider a cutoff collision kernel, i.e. for $ n\in \N $, $ n\in \N $, $ B_n:=(|v-v_*|\wedge n)^\gamma \, [b(\cos\theta)\wedge n] $. The corresponding collision operator is denoted by $ Q_n $. Solutions $ f^n $ to the corresponding problem with finite entropy were constructed by Cercignani in \cite{Cercignani1989ExistenceHomoenergetic}. The main idea was to study the problem by integrating via characteristics and to adapt arguments in \cite{Arkeryd72OnBoltzmannEq}. Furthermore, the mass, momentum and energy/temperature satisfy the a priori estimates in \eqref{eq:MacroscopicQuantities}. In particular, $ \norm[L^1_2]{f^n_t} $ is bounded uniformly in $ n\in\N $, locally in time.
	
	If $ f_0\in L^1_p $ then we have $ f^n\in L^\infty_{\mathrm{loc}}([0,\infty);L^1_p) $. We use the above Povzner estimates to obtain bounds in $ L^1_p $ uniformly in $ n\in\N $, $ p>2 $. We have with Lemma \ref{lem:Povzner} for $ M_p^n(t):=\int_{\R^3}|v|^pf_t^n\, dv $
	\begin{align*}
		\dfrac{d}{dt}M_p^n &\leq C(p,L)M_p^n+C\Lambda_n M_{\gamma+p/2}^nM_{p/2}^n
		\\
		&-c\Lambda_n\int_{\R^3}\int_{\R^3}(|v-v_*|\wedge n)^\gamma |v|^p\, f_t^n(v)f_t^n(v_*)\, dv_*dv.
	\end{align*}
	Here, we used for the last term in $ H(v,v_*) $ in \eqref{eq:PovznerDecomposition}, $ |v|^{p}\ind_{\set{|v|/2< |v_*|< 2|v|}} \lesssim (|v||v_*|)^{p/2} $. We apply
	\begin{align*}
		(|v-v_*|\wedge n)^\gamma \geq \dfrac{1}{4}(|v|\wedge n)^\gamma-|v_*|^\gamma
	\end{align*}
	to get with $ M_\gamma\leq \norm[L^1_2]{f^n_t} $
	\begin{align*}
		\dfrac{d}{dt}M_p^n\leq CM_p^n + C\Lambda_n M_{\gamma+p/2}^nM_{p/2}^n+c\Lambda_nM_p^n-\dfrac{c\Lambda_n}{4}\int_{\R^3}(|v|\wedge n)^\gamma |v|^p\, f_t^n(v)dv.
	\end{align*}
	If $ p\leq 4 $ we have $ M_{p/2}^n\leq \norm[L^1_2]{f^n_t} $ and we can use ($ \gamma+p/2<p $)
	\begin{align*}
		M_{\gamma+p/2}^n \leq C_\varepsilon M_0^n+\varepsilon \int_{\R^3}(|v|\wedge n)^\gamma |v|^p\, f_t^n(v)dv
	\end{align*}
	for all $ \varepsilon>0 $. Hence, a Gronwall argument applies and $ M_p^n $ is bounded locally in time. This is uniform in $ n\in\N $, since $ \Lambda_n\leq \Lambda $. One can see, using the weak formulation, that $ t\mapsto\int f^n_t\varphi\, dv $ is Lipschitz, uniformly in $ n\in\N $, for any test function. The entropy bound \eqref{eq:Entropy} yields then weak $ L^1 $ compactness by the Dunford-Pettis theorem, $ f^{n_k}\rightharpoonup f $ for a limit $ f\in L^\infty_{\mathrm{loc}}(L^1_p) $ and a subsequence $ n_k\to \infty $. Furthermore, we can pass to the limit in the definition of the weak formulation \eqref{eq:WeakForm}, since $ p>2 $. In particular, $ f $ satisfies \eqref{eq:MacroscopicQuantities}.
	
	In the case $ p> 4 $, one can use the above reasoning inductively, so that the term $ M_{p/2}^n $ can be bounded by the previous inductive step.
	
	\textit{Step 2:} We now prove the gain of moments using the Povzner estimates similarly to the homogeneous Boltzmann equation. One argues again inductively. We obtain the estimate by testing with $ |v|^p $
	\begin{align*}
		\dfrac{d}{dt}M_p \leq CM_p + C\Lambda M_{\gamma+p/2}M_{p/2} - c\Lambda M_{p+\gamma}.
	\end{align*}
	From a previous inductive step we know that $ M_{p/2} $ is bounded locally in time. We can use $ M_p+M_{\gamma+p/2} \leq C_\varepsilon M_0+\varepsilon M_{p+\gamma} $ to get
	\begin{align*}
		\dfrac{d}{dt}M_p \leq C_\varepsilon M_0- \dfrac{c\Lambda}{2} M_{p+\gamma}.
	\end{align*}
	An integration yields
	\begin{align*}
		M_p(T)+\dfrac{c\Lambda}{2} \int_0^TM_{p+\gamma}(s)\, ds \leq M_p(0)+C(T).
	\end{align*} 
	Hence, for any $ t_1>0 $ we can find $ t_0\in(0,t_1) $ such that $ M_{p+\gamma}(t_0) $ is finite. Thus, the solution gained a moment of order $ \gamma>0 $. One can then argue with $ p+\gamma $ instead of $ p $ and starting from the time $ t_0 $. However, the preceding argument was formal, but can be made rigorous when using a cutoff of $ |v|^p $ as a test function.
	
	\textit{(ii)} We prove uniqueness following the arguments in \cite[Theorem 1, Proposition 1]{DesvillettesMouhot2009StabilityUniquenessHomogBoltzmann}, where the homogeneous Boltzmann equation was considered. 
	
	Let $ f,\,\tilde{f} $ be two weak solutions to \eqref{eq:homenergBEWellPosReg}. Due to $ f,\, \tilde{f}\in L^\infty_{\mathrm{loc}}([0,\infty); W^{1,1}_{q+1+\gamma}) $ the right hand side in \eqref{eq:homenergBEWellPosReg} is in $ L^\infty_{\mathrm{loc}}([0,\infty); L^1_{q}) $, see Lemma \ref{lem:L1EstimateCollOp} for the estimate of the collision operator. Thus $ t\mapsto f_t,\, \tilde{f}_t\in L^1_q $ are Lipschitz. This allows to make the following reasoning rigorous. As in \cite{DesvillettesMouhot2009StabilityUniquenessHomogBoltzmann} set $ D=f-\tilde{f} $ and $ S=f+\tilde{f} $ to get the equation
	\begin{align*}
		\partial_t D = L_tv \cdot \nabla D + \dfrac{1}{2}\left( Q(S,D)+ Q(D,S) \right) .
	\end{align*}  
	We then obtain (by formally testing this equation with $ \text{sgn}(D)\left\langle v \right\rangle^q $ and integrating in time)
	\begin{align*}
		\norm[L^1_q]{D_T} &= \int_0^T\int_{\R^3}\left[ -\div \left( L_tv\left\langle v \right\rangle^q \right)\, |D_t| + \dfrac{1}{2}\left( Q(S_t,D_t)+Q(D_t,S_t)\right) \text{sgn}D_t\left\langle v \right\rangle^q \right] \, dvdt
		\\
		&\leq C_q\norm[\infty]{L}\int_0^T\norm[L^1_q]{D_t}\, dt+\int_0^T\int_{\R^3}\dfrac{1}{2}\left( Q(S_t,D_t)+Q(D_t,S_t)\right) \text{sgn}D_t\left\langle v \right\rangle^q \, dvdt.
	\end{align*}
	We use the same estimates as in \cite{DesvillettesMouhot2009StabilityUniquenessHomogBoltzmann} for the collision operator. The idea is to split into a cutoff and non-cutoff part $ Q=Q_{c,\varepsilon}+Q_{nc,\varepsilon} $ with respect to the angular part $ b=b_{c,\varepsilon}+b_{nc,\varepsilon} $ for a parameter $ \varepsilon>0 $. For the cutoff part, a variant of the Povzner estimate is used (see \cite[Lemma 1]{LuWennberg2002SolutionIncEnergy}) to get
	\begin{align*}
		&\dualbra{Q_{c,\varepsilon}(S_t,D_t)+Q_{c,\varepsilon}(D_t,S_t)}{\text{sgn}D_t\left\langle v \right\rangle^q} 
		\\
		&\leq \dualbra{Q_{c,\varepsilon}(S_t,|D_t|)+Q_{c,\varepsilon}(|D_t|,S_t)}{\left\langle v \right\rangle^q}+2\int_{\R^3}\int_{\R^3}\int_{S^2}|v-v_*|^\gamma b_{c,\varepsilon}(n\cdot \sigma)\, |D_*|\, S\left\langle v \right\rangle^q \, d\sigma dv_*dv
		\\
		&\leq C_\varepsilon\norm[L^1_q]{D_t}-K\norm[L^1_{q+\gamma}]{D_t}.
	\end{align*}
	For the non-cutoff part, we have
	\begin{align*}
		\dualbra{Q_{nc,\varepsilon}(S_t,D_t)}{\text{sgn}D_t\left\langle v \right\rangle^q} \leq \int_{\R^3}\int_{\R^3}\int_{S^2} |v-v_*|^\gamma b_{nc,\varepsilon}(n\cdot \sigma) \left( S'_*|D'|-S_*|D| \right) \left\langle v \right\rangle^q\, d\sigma dv_*dv.
	\end{align*}
	We can now us the pre-post collisional change of variables and the fact that $ |\left\langle v' \right\rangle^q-\left\langle v \right\rangle^q|\lesssim \sin(\theta/2)\left\langle v \right\rangle^q\left\langle v_* \right\rangle^q  $ to get
	\begin{align*}
		\dualbra{Q_{nc,\varepsilon}(S_t,D_t)}{\text{sgn}D_t\left\langle v \right\rangle^q} \leq c_\varepsilon \norm[L^1_{q+\gamma}]{S_t}\norm[L^1_{q+\gamma}]{D_t}.
	\end{align*}
	Finally, using Lemma \ref{lem:L1EstimateCollOp} we have
	\begin{align*}
		\norm[L^1_q]{Q_{nc,\varepsilon}(D_t,S_t)}\leq c_\varepsilon\norm[L^1_{q+\gamma}]{D_t}\norm[W^{1,1}_{q+\gamma+1}]{S_t}.
	\end{align*}
	Note that $ c_\varepsilon\to 0 $ as $ \varepsilon\to 0 $. For $ \varepsilon $ small enough we get in total 
	\begin{align*}
		\norm[L^1_q]{D_T} \leq (C_q\norm[\infty]{L}+C_\varepsilon)\int_0^T\norm[L^1_q]{D_t}\, dt
	\end{align*}
	for all $ T\geq0 $. Hence, we conclude $ D\equiv 0 $.
\end{proof}

Now we want to prove that the weak solutions constructed in Proposition \ref{pro:Wellposedness} are smooth for positive times. This is analogous to the homogeneous Boltzmann equation and follows from the singular behavior of the angular part of the collision kernel, see \cite{AlexandreDesvillettesVillaniWennberg2000EntropyDissip}. Here, we follow the treatment in \cite{AlexandreMorimoto2012SmoothingEffectWeakSolHomogBE} and show how to adapt the arguments for equation \eqref{eq:homenergBEWellPosReg} containing also a drift term. To this end, we use two lemmas proved in \cite{AlexandreMorimoto2012SmoothingEffectWeakSolHomogBE}. For the first one see \cite[Proposition 2.1, Proposition 3.8]{AlexandreMorimoto2012SmoothingEffectWeakSolHomogBE}.
\begin{lem}\label{lem:CoercivityUpperBound}
	The following two estimates hold.
	\begin{enumerate}[(i)]
		\item For any $ g\in L^1_2 $ with $ g\geq0 $, $ \norm[L^1_2]{g}+H(g)\leq E_0 $, $ \norm[L^1]{g}\geq D_0 $ for $ D_0,\, E_0>0 $ we have
		\begin{align*}
		-\dualbra{Q(g,f)}{f}_{L^2}\geq c_0\norm[H^s_{\gamma/2}]{f}^2-C\norm[L^2]{f}^2.
		\end{align*}
		Here, the constants $ c_0,\, C $ only depend on $ D_0,\, E_0 $. Recall that $ s\in(0,1/2) $ is given in \eqref{eq:AssumptCorssSectionNonCutoff}.
		
		\item For any $ r\in [2s-1,2s] $, $ \ell\in [0,\gamma+2s] $ we have
		\begin{align*}
		\left| \dualbra{Q(f,g)}{h}_{L^2} \right| \lesssim \norm[L^1_{\gamma+2s}]{f}\norm[H^r_{\gamma+2s-\ell}]{g}\norm[H^{2s-r}_{\ell}]{h}.
		\end{align*}
	\end{enumerate}
\end{lem}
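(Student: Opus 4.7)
Both estimates are taken verbatim from the framework developed in \cite{AlexandreDesvillettesVillaniWennberg2000EntropyDissip} and refined in \cite{AlexandreMorimoto2012SmoothingEffectWeakSolHomogBE}, so the plan is to reduce each item to the results quoted there and to check that our normalization of $B$ (with $\gamma\in(0,1)$ and $s\in(0,1/2)$) meets their hypotheses. In doing so I will separately handle the coercivity (which needs the non-degeneracy of $g$ through the constants $D_0,E_0$) and the trilinear upper bound (which is essentially a pseudo-differential Sobolev estimate).

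For part (i), the starting point is the entropy dissipation identity, obtained after a pre-post-collisional change of variables,
\begin{align*}
-\dualbra{Q(g,f)}{f}_{L^2} = \tfrac{1}{2}\int_{\R^3}\!\!\int_{\R^3}\!\!\int_{S^2} b(n\cdot\sigma)|v-v_*|^\gamma g_*\, (f'-f)^2\, d\sigma\, dv_*\, dv + \mathcal{R}(g,f),
\end{align*}
with $\mathcal R(g,f)$ a remainder controlled by $\|f\|_{L^2}^2$ times $\|g\|_{L^1_{\gamma}}$ via the cancellation lemma. The singular part is rewritten through the Bobylev representation as a quadratic form with symbol behaving like $\min(|\xi|,1)^{2s}$ weighted by $|v-v_*|^\gamma$. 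The assumption $\|g\|_{L^1}\ge D_0$ together with $\|g\|_{L^1_2}+H(g)\le E_0$ rules out concentration of mass and ensures that, after averaging in $v_*$, the symbol stays uniformly bounded below by a constant times $\langle v\rangle^\gamma\, |\xi|^{2s}$ on the complement of a compact frequency ball. This yields the coercive lower bound $c_0\|f\|_{H^s_{\gamma/2}}^2$ modulo an $L^2$ error that is absorbed into $-C\|f\|_{L^2}^2$.

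For part (ii), the approach is a dyadic/angular decomposition $b=b_{c,\delta}+b_{nc,\delta}$ at a threshold $\theta\le\delta$. The cutoff part yields an $L^1_{\gamma+2s}$ factor on $f$ by standard Povzner-type estimates (cf.\ Lemma~\ref{lem:Povzner}). For the singular part, we write $g'-g$ as a Taylor remainder in $\sigma$, use the symmetry $\sigma\mapsto -\sigma$ (made legitimate by the symmetrization of $b$ explained after \eqref{eq:AssumptCrossSection}) to kill the first-order term, and reinterpret the remaining second-order expression as a fractional operator of order $2s$ acting between $g$ and $h$. Distributing the fractional smoothness via interpolation ($r\in[2s-1,2s]$) and the $v_*$-weight $\langle v-v_*\rangle^{\gamma+2s}\le\langle v\rangle^{\gamma+2s-\ell}\langle v_*\rangle^\ell$ yields the claimed trilinear bound. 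The optimization of $\delta$ together with the duality $H^r \leftrightarrow H^{2s-r}$ with matching weights $\gamma+2s-\ell$ and $\ell$ produces precisely the stated inequality.

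The main obstacle is not conceptual but bookkeeping: to get the exact weight distribution $(\gamma+2s-\ell,\ell)$ and the exact smoothness split $(r,2s-r)$ one needs the careful commutator estimates between multiplication by $\langle v\rangle^m$ and the integral operator in \cite{AlexandreMorimoto2012SmoothingEffectWeakSolHomogBE}. Since all the required ingredients (Bobylev identity, cancellation lemma, non-degeneracy through $(D_0,E_0)$, fractional Taylor expansion) are in place there, I would simply invoke \cite[Prop.~2.1, Prop.~3.8]{AlexandreMorimoto2012SmoothingEffectWeakSolHomogBE} after verifying that our assumptions (A-1)--(A-3) specialize their hypotheses.
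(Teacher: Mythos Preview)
Your proposal is correct and matches the paper's treatment exactly: the paper gives no proof of this lemma and simply refers to \cite[Proposition 2.1, Proposition 3.8]{AlexandreMorimoto2012SmoothingEffectWeakSolHomogBE}, which is precisely what you end up invoking. Your additional sketch of the underlying mechanisms (cancellation lemma and Bobylev identity for (i), angular decomposition and fractional Taylor expansion for (ii)) goes beyond what the paper provides but is consistent with how those results are actually proved in the cited reference.
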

Since we consider weak solutions an approximation procedure is necessary. As in \cite{AlexandreMorimoto2012SmoothingEffectWeakSolHomogBE} we define the mollifier in Fourier space via
\begin{align*}
M_\lambda^\delta (\xi) = \dfrac{\left\langle \xi \right\rangle^\lambda }{(1+\delta\left\langle \xi \right\rangle^{N_0} )}
\end{align*}
for $ \delta>0 $, $ \lambda, \, N_0\in \R $. This is a pseudo-differential symbol $ M_\lambda^\delta\in S^{\lambda-N_0}_{1,0} $ and we define accordingly
\begin{align*}
M_\lambda^\delta(D_v)f = \mathscr{F}^{-1}\left[ M_\lambda^\delta \mathscr{F}[f] \right], 
\end{align*}
where $ \mathscr{F}[f] $ denotes the Fourier transform of $ f $. We also abbreviate $ M_\lambda^\delta f $. The next lemma is a commutator estimate, see \cite[Theorem 3.6]{AlexandreMorimoto2012SmoothingEffectWeakSolHomogBE}.
\begin{lem}\label{lem:CommutatorErstimate}
	Let $ s'\in (0,s) $ and assume that $ \lambda,\, N_0 $ satisfies
	\begin{align}\label{eq:CommutatorCondition}
	5+\gamma \geq 2(N_0-\lambda).
	\end{align}
	\begin{enumerate}[(i)]
		\item If $ s'+\lambda<3/2 $ we have
		\begin{align*}
		\left| \dualbra{M_\lambda^\delta Q(f,g)-Q(f,M_\lambda^\delta g)}{h}_{L^2} \right|\lesssim \norm[L^1_\gamma]{f}\norm[H^{s'}_{\gamma/2}]{M_\lambda^\delta g} \norm[H^{s'}_{\gamma/2}]{h}.
		\end{align*}
		\item If $ s'+\lambda\geq3/2 $ we have
		\begin{align*}
		\left| \dualbra{M_\lambda^\delta Q(f,g)-Q(f,M_\lambda^\delta g)}{h}_{L^2} \right|\lesssim \left( \norm[L^1_\gamma]{f} + \norm[H^{(\lambda+s'-3)_+}]{f} \right) \norm[H^{s'}_{\gamma/2}]{M_\lambda^\delta g} \norm[H^{s'}_{\gamma/2}]{h}.
		\end{align*}
	\end{enumerate}
\end{lem}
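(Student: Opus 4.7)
The plan is to treat $M_\lambda^\delta$ as a Fourier multiplier of symbol class $S^{\lambda-N_0}_{1,0}$ and exploit that, under the pre-to-postcollisional change $v\mapsto v'$, the associated dual variable is shifted only by an amount comparable to $|\xi|\sin(\theta/2)$. Consequently $M_\lambda^\delta$ almost commutes with $Q(f,\cdot)$: the commutator is one order smoother than $M_\lambda^\delta$ itself, and the gained factor $\sin(\theta/2)$ tames the non-cutoff angular singularity enough that the remainder can be controlled by the bilinear bound of Lemma \ref{lem:CoercivityUpperBound}(ii).

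Concretely, I would pair $M_\lambda^\delta Q(f,g)-Q(f,M_\lambda^\delta g)$ against a test function $h$, apply the pre-post collisional change of variables to route $M_\lambda^\delta$ onto $h$ in the gain term, and combine the two integrals so that the integrand contains the symbol difference $M_\lambda^\delta(\xi)-M_\lambda^\delta(\xi^+)$ with $\xi^+=(\xi+|\xi|\sigma)/2$, interpreted via a Bobylev-type representation extended to $\gamma\neq 0$ by writing $|v-v_*|^\gamma$ as a tempered convolution kernel. Since $M_\lambda^\delta\in S^{\lambda-N_0}_{1,0}$, the mean value theorem gives
\[
\bigl|M_\lambda^\delta(\xi)-M_\lambda^\delta(\xi^+)\bigr|\lesssim \sin(\theta/2)\,\langle\xi\rangle^{\lambda-1}/(1+\delta\langle\xi\rangle^{N_0}).
\]
Multiplying against $\sin\theta\,b(\cos\theta)\sim\theta^{-1-2s}$ leaves a factor $\theta^{-2s}$, which is integrable once a further half-derivative is distributed between $M_\lambda^\delta g$ and $h$ by Cauchy-Schwarz. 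Applying Lemma \ref{lem:CoercivityUpperBound}(ii) with $r=s'$ and $\ell$ chosen to balance the weight $\gamma/2$ between the two factors then produces the target bilinear estimate, the norm $\norm[L^1_\gamma]{f}$ arising on the $f$-slot from Hausdorff-Young together with the $|v-v_*|^\gamma$ kernel.

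The dichotomy between $s'+\lambda<3/2$ and $s'+\lambda\geq 3/2$ reflects a standard Sobolev embedding threshold: below it, the action of $f$ on an $H^{s'+\lambda}$ factor is controlled purely by $\norm[L^1_\gamma]{f}$; at or above it, this fails and one must pass through an $L^\infty$-type control on the Fourier side, paying the additional norm $\norm[H^{(\lambda+s'-3)_+}]{f}$. The structural condition \eqref{eq:CommutatorCondition}, $5+\gamma\geq 2(N_0-\lambda)$, is exactly what is required to absorb the polynomial weight produced by $|v-v_*|^\gamma$ against the decay built into $M_\lambda^\delta/\langle\xi\rangle$. I expect the main obstacle to be uniform-in-$\delta$ bookkeeping of these weights: the regularization $1/(1+\delta\langle\xi\rangle^{N_0})$ only tames ultraviolet tails and does not simplify the core Taylor expansion of the symbol, so the commutators between multiplication by $\langle v\rangle^{\gamma/2}$ and $M_\lambda^\delta$—which appear when moving weights between the $g$- and $h$-slots—must be handled by symbolic calculus while verifying that all constants are bounded independently of $\delta\to 0$.
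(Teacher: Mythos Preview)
The paper does not prove this lemma: it is quoted verbatim from \cite[Theorem 3.6]{AlexandreMorimoto2012SmoothingEffectWeakSolHomogBE} with no argument supplied, so there is no in-paper proof to compare your proposal against.

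That said, your outline is broadly consistent with how the result is obtained in the cited reference: one works on the Fourier side via a Bobylev-type representation, exploits the mean-value inequality for the symbol $M_\lambda^\delta$ to gain a factor $\sin(\theta/2)$ that compensates the angular singularity, and the threshold $s'+\lambda=3/2$ indeed arises from a Sobolev-embedding issue when controlling the $f$-contribution. Two cautions are in order. First, the actual proof in \cite{AlexandreMorimoto2012SmoothingEffectWeakSolHomogBE} does not literally reduce to an application of Lemma~\ref{lem:CoercivityUpperBound}(ii); rather, the commutator is analyzed directly in Fourier variables and the structural condition \eqref{eq:CommutatorCondition} enters through a second-order Taylor expansion of $M_\lambda^\delta$ combined with the specific decay of $\partial_\xi^2 M_\lambda^\delta$, not merely through absorbing a $|v-v_*|^\gamma$ weight. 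Second, the extension of the Bobylev identity to $\gamma\neq 0$ via writing $|v-v_*|^\gamma$ as a convolution is more delicate than you suggest and in the reference is handled by a dyadic Littlewood--Paley decomposition of the kinetic factor rather than a single tempered kernel. Your sketch captures the mechanism but underestimates the amount of harmonic-analytic machinery needed to make it rigorous uniformly in $\delta$.
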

\begin{pro}\label{pro:Regularity}
	Any weak solution $ f $ to \eqref{eq:homenergBEWellPosReg} with $ f\in L^\infty([0,T];L^1_p) $ for all $ p\geq0 $ satisfies for all $ k,\, p\geq0 $ and any $ t_0>0 $
	\begin{align*}
	f\in L^\infty([t_0,T];H^k_p).
	\end{align*}
\end{pro}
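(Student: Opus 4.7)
The plan is to adapt the smoothing strategy of Alexandre-Morimoto \cite{AlexandreMorimoto2012SmoothingEffectWeakSolHomogBE} for the homogeneous Boltzmann equation so as to accommodate the additional drift term $L_t v\cdot\nabla_v f$. The argument is an induction on the regularity index, using the coercivity of the collision operator provided by Lemma \ref{lem:CoercivityUpperBound}(i) to gain $s$ derivatives at each step, together with the commutator estimate of Lemma \ref{lem:CommutatorErstimate} to make the formal computations rigorous at the level of weak solutions via a Fourier mollifier.

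As a preliminary step I would upgrade $f$ from $L^\infty_{\mathrm{loc}}([0,\infty); L^1_p)$ to $L^\infty([t_0,T]; L^2_p)$ for every $p\geq 0$ and $t_0>0$. This is a standard consequence of the entropy bound \eqref{eq:Entropy}, the propagation of all polynomial moments established in Proposition \ref{pro:Wellposedness}(i), and a de la Vall\'ee-Poussin / Dunford-Pettis type equi-integrability argument combined with a first application of the coercivity in Lemma \ref{lem:CoercivityUpperBound}(i), exactly as in \cite{AlexandreMorimoto2012SmoothingEffectWeakSolHomogBE}. Here the drift term contributes only $\operatorname{tr} L_t \|f\|_{L^2_m}^2$ and a lower-order weighted term, which are controlled by Gronwall.

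Given $f\in L^\infty([t_0,T]; H^\lambda_m)$ for some $\lambda\geq 0$ and all $m\geq 0$, the inductive step is to gain $s'\in(0,s)$ more derivatives. I apply the Fourier multiplier $M^\delta_{\lambda+s'}(D_v)$ to \eqref{eq:homenergBEWellPosReg}, multiply by $\chi(t)^2\langle v\rangle^{2m} M^\delta_{\lambda+s'} f$ where $\chi$ is a smooth time cutoff vanishing at $t=t_0/2$ and equal to $1$ on $[t_0,T]$, and integrate in $(v,t)$. The collision contribution splits as
\begin{align*}
\dualbra{M^\delta_{\lambda+s'} Q(f,f)}{\chi^2\langle v\rangle^{2m} M^\delta_{\lambda+s'} f}_{L^2} = \dualbra{Q(f, M^\delta_{\lambda+s'} f)}{\chi^2\langle v\rangle^{2m} M^\delta_{\lambda+s'} f}_{L^2} + R_\delta,
\end{align*}
where the first term, after redistributing the weight $\langle v\rangle^m$, produces the coercive lower bound $c_0\|\chi M^\delta_{\lambda+s'} f\|_{H^s_{m+\gamma/2}}^2$ by Lemma \ref{lem:CoercivityUpperBound}(i) (up to harmless $L^2_m$ terms), while the remainder $R_\delta$ is bounded by Lemma \ref{lem:CommutatorErstimate} and Lemma \ref{lem:CoercivityUpperBound}(ii) in terms of $\|M^\delta_{\lambda+s'} f\|_{H^{s'}_{m+\gamma/2}}$ and therefore absorbed by the coercive term since $s'<s$. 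Choosing $N_0$ so that \eqref{eq:CommutatorCondition} holds and passing $\delta\to 0$ by Fatou gives $f\in L^\infty([t_0,T]; H^{\lambda+s'}_m)$ for every $m\geq 0$. Iterating yields arbitrary regularity $H^k_p$.

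The main obstacle, which is the only genuinely new piece compared with the homogeneous case, is to control the drift $L_t v\cdot\nabla_v f$ under the mollifier and the polynomial weight. Since multiplication by $v_j$ corresponds to $i\partial_{\xi_j}$ on the Fourier side, the commutator $[M^\delta_{\lambda+s'}, v_j]$ is a Fourier multiplier of order at most $\lambda+s'-1-N_0$, bounded on $L^2$ provided \eqref{eq:CommutatorCondition} is satisfied. Integration by parts against $\chi^2\langle v\rangle^{2m} M^\delta_{\lambda+s'} f$ then produces the expected $\tfrac{1}{2}\operatorname{tr} L_t\,\|M^\delta_{\lambda+s'} f\|_{L^2_m}^2$ together with a term involving $|\nabla\langle v\rangle^{2m}|\lesssim \langle v\rangle^{2m-1}$, which is absorbed by the stronger coercive norm $\|M^\delta_{\lambda+s'} f\|_{H^s_{m+\gamma/2}}^2$ thanks to $\gamma>0$. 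Since $L_t$ is independent of $v$ and locally bounded in $t$, no further difficulty arises, and the rest of the argument is a direct transcription of the Alexandre-Morimoto scheme.
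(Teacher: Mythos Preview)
Your proposal follows the same Alexandre--Morimoto strategy as the paper, and your handling of the drift term is essentially what the paper does: it writes the commutator $C_t(D_v)=L_tv\cdot\nabla M^\delta-M^\delta L_tv\cdot\nabla$ with symbol $-L_t^\top\xi\cdot\nabla_\xi M^\delta$, bounded by $C\|L_t\|M^\delta$ uniformly in $\delta$. Two technical points, however, differ from the paper's execution and would need more work than your sketch suggests.

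First, the paper does \emph{not} use a time cutoff $\chi$ together with a fixed-order mollifier $M^\delta_{\lambda+s'}$; it uses a time-dependent exponent $\lambda(t)=Nt+a$ with $NT_1=1-s$. This is not cosmetic: with your cutoff the term $\int\chi\chi'\,\|M^\delta_{\lambda+s'}f\|_{L^2_m}^2$ is not a priori bounded uniformly as $\delta\to0$, since on $\operatorname{supp}\chi'$ you only know $f\in H^\lambda_m$, not $H^{\lambda+s'}_m$. Closing would require an additional interpolation between $H^\lambda$ and the coercive $H^s$-norm that you did not mention. The time-dependent exponent sidesteps this: at $t=0$ one has $M^\delta_{\lambda(0)}=M^\delta_a$, bounded on $H^a$ uniformly in $\delta$, and the extra $\partial_t$-contribution is only $N\log\langle D_v\rangle$, absorbed by coercivity since $s'<s$. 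Relatedly, the paper runs the coercivity argument in \emph{unweighted} $L^2$ and recovers the polynomial weights afterwards by interpolation in weighted Sobolev spaces (\cite[Lemma~3.9]{AlexandreMorimoto2012SmoothingEffectWeakSolHomogBE}), rather than carrying $\langle v\rangle^{2m}$ through the estimate; this avoids the weight/collision-operator commutators your phrase ``redistributing the weight'' glosses over.

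Second, your description of the $L^1\to L^2$ upgrade via ``de la Vall\'ee-Poussin/Dunford--Pettis'' is not the right mechanism: those arguments give weak $L^1$ compactness, not an $L^2$ bound. The paper instead uses the embedding $L^1_p\subset H^{-3/2-\varepsilon}_p$ and starts the same induction at negative regularity, following \cite[Theorem~5.1]{AlexandreMorimoto2012SmoothingEffectWeakSolHomogBE}; the choice of $\lambda(t),N_0$ is adjusted so that $M^\delta_{\lambda(t)}f_{t'}\in H^{s_1}_p$ for some $s_1>s$, which suffices to justify the computations.
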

Note that due to Proposition \ref{pro:Wellposedness} (i) the above assumptions are satisfied for positive times.
\begin{proof}[Proof of Proposition \ref{pro:Regularity}]
	The proof is similar to the original one in \cite[Theorem 4.1, Theorem~5.1]{AlexandreMorimoto2012SmoothingEffectWeakSolHomogBE}.
	
	\textit{Step 1:} First we prove that $ f\in L^\infty([T_0,T];L^2_\ell) $ for all $ \ell\geq0 $ and some $ T_0\geq0 $ implies the claim. We do this by induction and indicate the induction step. Accordingly, let us assume w.l.o.g. that for some $ a\geq0 $ and any $ \ell\geq0 $ we have
	\begin{align*}
		f\in L^\infty([0,T];H^a_\ell).
	\end{align*}
	Choose $ T_1>0 $ arbitrary. We define $ \lambda(t):= Nt+a $ for $ N>0 $ with $ NT_1=(1-s) $ and $ N_0:=a+(5+\gamma)/2 $. In particular, \eqref{eq:CommutatorCondition} holds. For any $ t\in [0,T_1] $ we have
	\begin{align*}
		\lambda(t)-N_0-a\leq \lambda(T_1)-N_0= 1-s -(5+\gamma)/2 < -3/2.
	\end{align*}
	Hence, we have for all $ p\geq0 $
	\begin{align}\label{eq:Step1ProofApproxRegularity}
		M_{\lambda(t)}^\delta f_{t'}\in L^\infty ([0,T]\times [0,T]; H^{3/2}_p\cap L^\infty).
	\end{align}
	In Step 2, we show that this implies
	\begin{align}\label{eq:ProofL2Continuity}
		M_{\lambda(t)}^\delta f_t\in C([0,T];L^2)
	\end{align}
	and that the following formal argument can be made rigorous. We use $ (M_{\lambda(t)}^\delta)^2f_t $ as a test function to get
	\begin{align*}
		\dfrac{1}{2}\norm[L^2]{M_{\lambda(t)}^\delta f_t}^2 &= \dfrac{1}{2}\norm[H^a]{f_0}^2+ \dfrac{1}{2}\int_0^t\trace L_\tau \, \norm[L^2]{M_{\lambda(\tau)}^\delta f_\tau}^2\, d\tau + \dfrac{1}{2}\int_0^t \int_{\R^3} f_\tau \, \partial_\tau\left[  (M_{\lambda(\tau)}^\delta)^2 \right] f_\tau \, dv\tau
		\\ 
		&- \int_0^t\int_{\R^3} f_\tau \, L_\tau v\cdot \nabla (M_{\lambda(\tau)}^\delta)^2f_\tau\, dvd\tau + \int_0^t\int_{\R^3} Q(f_\tau,f_\tau) (M_{\lambda(\tau)}^\delta)^2f_\tau\, dvd\tau.
	\end{align*}
	The last two terms make sense when we use commutators. This leads to
	\begin{align}\label{eq:ProofWeakForm}
		\begin{split}
			\dfrac{1}{2}\norm[L^2]{M_{\lambda(t)}^\delta f_t}^2 &= \dfrac{1}{2}\norm[H^a]{f_0}^2 + N\int_0^t \int_{\R^3} \left( \sqrt{\log \left\langle D_v \right\rangle} 	M_{\lambda(\tau)}^\delta f_\tau \right)^2 \, dvd\tau
			\\ 
			&+ \int_0^t\int_{\R^3} \left( C_\tau f_\tau\right)  \, \left( M_{\lambda(\tau)}^\delta f_\tau\right) \, dvd\tau + \int_0^t \dualbra{Q(f_\tau,M_{\lambda(\tau)}^\delta f_\tau)}{M_{\lambda(\tau)}^\delta f_\tau} \, d\tau 
			\\
			&+\int_0^t \dualbra{M_{\lambda(\tau)}^\delta Q(f_\tau,f_\tau)-Q(f_\tau,M_{\lambda(\tau)}^\delta f_\tau)}{M_{\lambda(\tau)}^\delta f_\tau}\, d\tau.
		\end{split}
	\end{align}
	Here, we introduced the commutator 
	\begin{align*}
		C_t(D_v):=L_t v\cdot  \nabla M_{\lambda(t)}^\delta(D_v) -  \nabla M_{\lambda(t)}^\delta(D_v) \cdot L_t v.
	\end{align*} 
	For formula \eqref{eq:ProofWeakForm} we used several observations. First of all, we applied
	\begin{align*}
		\partial_\tau M_{\lambda(\tau)}^\delta = N\log\left\langle \xi \right\rangle M_{\lambda(\tau)}^\delta.
	\end{align*}
	For the drift term we used
	\begin{align*}
		&\dfrac{1}{2}\int_0^t\trace L_\tau \, \norm[L^2]{M_{\lambda(\tau)}^\delta f_\tau}^2\, d\tau-\int_0^t\int_{\R^3} f_\tau \, L_\tau v\cdot \nabla (M_{\lambda(\tau)}^\delta)^2f_\tau\, dvd\tau
		\\
		&= \dfrac{1}{2}\int_0^t\trace L_\tau \, \norm[L^2]{M_{\lambda(\tau)}^\delta f_\tau}^2\, d\tau+\int_0^t\int_{\R^3} \left( C_\tau f_\tau\right)  \, \left( M_{\lambda(\tau)}^\delta f_\tau\right) \, dvd\tau 
		\\
		&\qquad + \int_0^t\int_{\R^3} \left[ M_{\lambda(\tau)}^\delta f_\tau\right] \, L_\tau v\cdot \nabla\left[ M_{\lambda(\tau)}^\delta f_\tau \right] \, dvd\tau.
	\end{align*}
	In the last expression we apply partial integration so that the term involving $ \trace L_t $ cancels. 
	
	In formula \eqref{eq:ProofWeakForm} all terms make sense due to \eqref{eq:Step1ProofApproxRegularity}. In fact, using the Fourier transform one can show that the symbol of $ C_t $ is given by $ -L^\top_t\xi\cdot \nabla_\xi M^\delta_{\lambda(t)} $, which is bounded by $ C\norm{L_t} M^\delta_{\lambda(t)} $ uniformly in $ \delta>0 $. The terms involving the collision operator make sense due to Lemma \ref{lem:CoercivityUpperBound} (ii) and Lemma \ref{lem:CommutatorErstimate} in conjunction with \eqref{eq:Step1ProofApproxRegularity}. 
	
	In \eqref{eq:ProofWeakForm} we use the previous observations and Lemma \ref{lem:CoercivityUpperBound} (i) to obtain
	\begin{align*}
		\dfrac{1}{2}\norm[L^2]{M_{\lambda(t)}^\delta f_t}^2 &\leq \dfrac{1}{2}\norm[H^a]{f_0}^2 + C\norm[L^\infty]{L}\int_0^t \norm[L^2]{M_{\lambda(\tau)}^\delta f_\tau}^2\, d\tau  +N\int_0^t\norm[L^2]{\sqrt{\log \left\langle D_v \right\rangle}  M_{\lambda(\tau)}^\delta f_\tau}^2 \, d\tau 
		\\
		&+ C_f\int_0^t\norm[H^{s'}_{\gamma/2}]{M_{\lambda(\tau)}^\delta f_\tau}^2 \, d\tau - c_f \int_0^t \norm[H^s_{\gamma/2}]{M_{\lambda(\tau)}^\delta f_\tau}^2 \, d\tau + C_f\int_0^t\norm[L^2]{M_{\lambda(\tau)}^\delta f_\tau}^2 \, d\tau.
	\end{align*}
	For the second, third, fourth and last term we use interpolation in Sobolev spaces to get
	\begin{align*}
		\sup_{t\in[0,T_1]}	\norm[L^2]{M_{\lambda(t)}^\delta f_t}^2  \leq C(T_1).
	\end{align*}
	This implies, with interpolation in weighted Sobolev spaces, see \cite[Lemma 3.9]{AlexandreMorimoto2012SmoothingEffectWeakSolHomogBE}, for $ s_0:=(1-s)/3 $
	\begin{align*}
		f\in L^\infty([T_1/2,T_1], H^{s_0+a}_\ell),
	\end{align*}
	since $ 0<s_0<\lambda(T_1/2)=(1-s)/2+a $. Since $ T_1>0 $ was arbitrary we conclude for any $ t_0>0 $ and $ \ell\geq0 $
	\begin{align*}
		f\in L^\infty([t_0,T], H^{s_0+a}_\ell).
	\end{align*} 
	The regularity improved by the fixed amount $ s_0>0 $ so that we can repeat the above reasoning inductively, starting at some new time $ t_0>0 $. Let us comment here on the use of Lemma \ref{lem:CommutatorErstimate}. Here, we needed to distinguish two cases. In the second case, when $ \lambda(t)+s'\geq 3/2 $ the constant $ C_f $ above includes the norm $ \norm[H^{(\lambda(t)+s'-3)_+}]{f_t} $. Let us verify that this is bounded in the induction. In the $ k $-th step we have $ \lambda(t):=Nt+ks_0 $ for $ t\leq T_1 $, $ NT_1=1-s $. Hence, we get
	\begin{align*}
		\lambda(t)+s'-3 \leq ks_0-2-s+s' \leq ks_0-2.
	\end{align*}
	Thus, this term is bounded due to the $ (k-1) $-th step.
	
	To prove that $ f\in L^\infty([0,T]; L^1_p) $ for all $ p\geq0 $ implies  $ f\in L^\infty([T_0,T];L^2_p) $ for all $ p\geq0 $ and any $ T_0\geq0 $, one can follow the arguments in \cite[Theorem 5.1]{AlexandreMorimoto2012SmoothingEffectWeakSolHomogBE}. Here, one starts the induction with the regularity
	\begin{align*}
		f\in L^\infty([T_0,T];H^{-3/2-\varepsilon}_p)
	\end{align*}
	for any $ p\geq0 $, $ \varepsilon>0 $ due to the embedding $ L^1_p\subset H^{-3/2-\varepsilon}_p $. Furthermore, one chooses $ \lambda(t) $ and $ N_0 $ such that for any $ p\geq0 $
	\begin{align*}
		M_{\lambda(t)}^\delta f_{t'}\in L^\infty ([0,T]\times [0,T]; H^{s_1}_p)
	\end{align*}
	and some $ s_1>s $. This regularity allows to make the corresponding computations rigorous, see Step 2. For more details see \cite[Theorem 5.1]{AlexandreMorimoto2012SmoothingEffectWeakSolHomogBE}.
	
	\textit{Step 2:} Finally, we show that the above formal computations can be made rigorous. This corresponds to \cite[Lemma 4.3]{AlexandreMorimoto2012SmoothingEffectWeakSolHomogBE}. We prove that, with the notation as in Step 1,
	\begin{align}\label{eq:ProofApproxRegularity}
		M_{\lambda(t)}^\delta f_{t'}\in L^\infty ([0,T]\times [0,T]; H^{s_1}_{\ell})
	\end{align}
	for all $ \ell\geq0 $ and some $ s_1>s $ implies \eqref{eq:ProofL2Continuity} and \eqref{eq:ProofWeakForm}.  We first show \eqref{eq:ProofL2Continuity} and due to the drift term a regularization is necessary. Let us define for $ \kappa>0 $
	\begin{align*}
		M_{\lambda(t)}^{\delta,\kappa}(D_v) = \dfrac{1}{1+\kappa \left\langle D_v \right\rangle }M_{\lambda(t)}^{\delta}(D_v).
	\end{align*}
	For $ t'<t $ we choose $ (M_{\lambda(\bar{t})}^{\delta,\kappa})^2f_{\bar{t}} $ with $ \bar{t}=t',\, t $ as time-independent test function. We can do this by an approximation $ (M_{\lambda(\bar{t})}^{\delta,\kappa})^{-1} \psi_j\to M_{\lambda(\bar{t})}^{\delta,\kappa} f_{\bar{t}} $ in $ H^s_{\ell_0} $ for $ \psi_j\in C_0^\infty $. In the corresponding expressions we use then again commutator estimates. For the collision operator, we apply Lemma \ref{lem:CommutatorErstimate} and Lemma \ref{lem:CoercivityUpperBound} (ii). For the drift term, we use the commutator
	\begin{align*}
		L_t v\cdot  \nabla M_{\lambda(\bar{t})}^{\delta,\kappa}(D_v) -  \nabla M_{\lambda(\bar{t})}^{\delta,\kappa}(D_v) \cdot L_t v.
	\end{align*} 
	As in Step 1, the corresponding symbol is bounded by $ CM_{\lambda(\bar{t})}^{\delta,\kappa} $ uniformly in $ \kappa,\, \delta>0 $. The two results for $ \bar{t}=t',\, t $ are added two yield
	\begin{align*}
		\norm[L^2]{M_{\lambda(t)}^{\delta,\kappa}f_{t}}^2-\norm[L^2]{M_{\lambda(t')}^{\delta,\kappa}f_{t'}}^2 = \int_{\R^3}f_t\, \left( (M_{\lambda(t)}^{\delta,\kappa})^2-(M_{\lambda(t')}^{\delta,\kappa})^2 \right) \, f_{t'}\, dv + \mathcal{O}(|t-t'|).
	\end{align*}
	The last part contains all the remaining terms, which are time-integrals with bounded integrand. The first term on the right-hand is seen to be also of order $ \mathcal{O}(|t-t'|) $. All of the terms are uniformly bounded in $ \kappa>0 $, so we let $ \kappa\to 0 $. This shows 
	\begin{align*}
		\lim_{t'\to t}\norm[L^2]{M_{\lambda(t')}^{\delta}f_{t'}}^2=\norm[L^2]{M_{\lambda(t)}^{\delta}f_{t}}^2.
	\end{align*}
	If we take the differences of the expressions for $ \bar{t}=t',\, t $ we get
	\begin{align*}
		\lim_{t'\to t}\int_{\R^3} \left( M_{\lambda(t')}^{\delta}f_{t'} \right) \left( M_{\lambda(t)}^{\delta}f_{t} \right) \, dv = \norm[L^2]{M_{\lambda(t)}^{\delta}f_{t}}^2.
	\end{align*}
	This implies \eqref{eq:ProofL2Continuity}. 
	
	To prove that \eqref{eq:ProofWeakForm} is rigorous, we divide $ [0,t] $ into time steps $ t_j=jt/k $, $ j=0,\ldots, k $. As above we want to use $ (M_{\lambda(\bar{t})}^{\delta,\kappa})^2f_{\bar{t}} $ with $ \bar{t}=t_{j-1},\, t_j $ as time-independent test function. Subtracting the resulting expressions and adding in $ j=0,\ldots, k $, we aim to letting $ k\to \infty $. This would lead to \eqref{eq:ProofWeakForm}. To this end, the integrands in the time-integrals have to be continuous in $ t $. In fact, we have 
	\begin{align*}
		M_{\lambda(t)}^\delta f_t\in C([0,T];H^s_\ell)
	\end{align*}
	as a consequence of \eqref{eq:ProofL2Continuity} and interpolation with the estimate \eqref{eq:ProofApproxRegularity}. This is enough to conclude.
\end{proof}

\section{Collision dominated behavior for a model equation}
\label{sec:CollDomBehav}

In this section, we study a rescaling of solutions $ g $ to \eqref{eq:homenergBE}. As in the introduction we consider $ f_t(v)=g_t(v\beta_t^{-1/2}+V_t)\beta_t^{-3/2}\rho_t^{-1} $ with $ \rho_t,\,  V_t,\, \beta_t $ given in \eqref{eq:MassMomentum}. This yields a solution to \eqref{eq:IndtrodModelHomoengBE}. However, the matrix $ L_t $ might be unbounded in time, as for instance in the case of combined orthogonal shear \eqref{eq:CombOrthShear}. In the analysis here, it will be convenient to reduce it to the case of bounded matrices $ L $. To this end, we use a change of time, see the proof of Theorem \ref{thm:HomoenergticSol1} in Section \ref{sec:Application}. Such a transformation yields a solution $ f $ to equations of the form (cf. \eqref{eq:IndtrodModelHomoengBE})
\begin{align}\label{eq:ModelEq}
	\begin{split}
		\partial_t f &= \div\left( \left( L_t-\alpha_t\right) v\, f \right) + \nu_t  \beta_t^{-\gamma/2}Q(f,f)
		\\
		\beta_t &= \beta_0 \exp\left( 2\int_0^t\alpha_s\, ds \right) \quad \alpha_t := \dfrac{1}{3}\int_{\R^3}v\cdot L_tv\, f_t(v)\, dv.
	\end{split}
\end{align}
In these equations, the positive function $ \nu $ and the matrix-valued function $ L $ are given. In the sequel, we study solutions to \eqref{eq:ModelEq}. Note that our investigations in the last section yields corresponding well-posedness and regularity results for \eqref{eq:ModelEq}.

Let us introduce the decomposition $ L_t = A_t +b_t \, I $ into the trace-free and trace part, $ b_t := \trace L_t/3 $. We study solutions to \eqref{eq:ModelEq} of the form
\begin{align}\label{eq:AnsatzHilbertExpansion}
	f_t(v)=\mu(v)+\bar{\mu}_t(v)+h_t(v).
\end{align}
The term $ \bar{\mu} $ corresponds to the first order term in the Hilbert-type expansion, cf. \eqref{eq:HilbertExpFirstOrder} and \eqref{eq:HilbertExpFirstOrder2}. Due to the term $ \nu $ it has the form
\begin{align}\label{eq:HilbertExpFirstOrder3}
	\bar{\mu}_t := \dfrac{1}{\eta_t}\Lin^{-1}\left[ -v\cdot A_t v \mu \right], \quad \eta_t:= \nu_t\, \beta_t^{-\gamma/2}.
\end{align} 
As in the introduction, let us determine the asymptotics of the inverse temperature $ \beta_t $, which yields the behavior of $ \eta_t= \nu_t \beta_t^{-\gamma /2} $. The inverse temperature $ \beta_t $ satisfies the equation, see \eqref{eq:ModelEq},
\begin{align*}
	\dfrac{\beta_t'}{2\beta_t} = \alpha_t = \dfrac{1}{3}\int_{\R^3} v\cdot L_t v\cdot f_t(v)\, dv.
\end{align*}
If we consider only the first two terms in \eqref{eq:AnsatzHilbertExpansion} we obtain the equation
\begin{align}\label{eq:AsympEqInverseTemp}
	\left( \beta_t^{-\gamma/2} \right)' = -\gamma\,  b_t\, \beta_t^{-\gamma/2} + \dfrac{\gamma\, a_t}{3 \nu_t},
\end{align}
where we defined 
\begin{align}\label{eq:ConstantFirstOrderHilbertExp}
	a_t:= \dualbra{v\cdot A_tv \, \mu}{ \Lin^{-1} \left[ v\cdot A_tv \, \mu\right] }_{L^2(\mu^{-1/2})}>0.
\end{align}
The solution to \eqref{eq:AsympEqInverseTemp} is given by
\begin{align*}
	B_t(\beta_0):=\beta_0^{-\gamma/2}\, e^{-\gamma\int_0^tb_s \, ds} + \int_0^t\dfrac{\gamma\, a_s  }{3\nu_s}\,e^{-\gamma\int_s^tb_r \, dr} \, ds.
\end{align*}
If $ h $ amounts to an error which is integrable in time, the behavior of $ \eta_t= \nu_t \beta_t^{-\gamma/2} $ is determined by the function
\begin{align}\label{eq:GrowthInversTemp}
	Z_t(\beta_0):= \nu_t B_t(\beta_0) = \beta_0^{-\gamma/2}\,\nu_t e^{-\gamma\int_0^tb_s \, ds} + \dfrac{\gamma}{3}\int_0^t\dfrac{\nu_t\, a_s  }{\nu_s}\,e^{-\gamma\int_s^tb_r \, dr} \, ds.
\end{align}
One crucial assumption in the theorem below is a growth condition on $ Z_t(\beta_0) $. As we will see in Section \ref{sec:Application}, this condition is always satisfied for homoenergetic solutions.

\begin{thm}\label{thm:MainTheorem}
	Consider equations \eqref{eq:ModelEq} under the following structural assumptions.
	\begin{enumerate}[(I)]
		\item The matrix $ L_t\in C^1([0,\infty); \R^{3\times 3}) $ satisfies $ \sup_{t\geq 0}\left\lbrace \norm{L_t}+\norm{L_t'} \right\rbrace <\infty $. Furthermore, for $ \nu\in C^1([0,\infty);(0,\infty)) $ it holds $ \sup_{t\geq 0}|\nu_t'/\nu_t|<\infty $.
		\item We assume that $ Z_t(1) \approx 1+t $ for all $ t\geq 0 $, where $ Z_t $ is given in \eqref{eq:GrowthInversTemp}.
	\end{enumerate}
	Let $ p_0> 4+4s +3/2 $ be arbitrary and $ f_0\in \mathcal{H}^1_{p_0} $ satisfying the normalization \eqref{eq:Normalization}. Consider the unique solution $ f $ to \eqref{eq:ModelEq} and define $ h_t := f_t - \mu- \bar{\mu}_t $ with $ \bar{\mu} $ given in \eqref{eq:HilbertExpFirstOrder3}.
	
	There are $ \varepsilon_0\in(0,1) $ sufficiently small and a constant $ C' $, depending only on $ p_0 $, $ L_t,\, \nu_t $ and the collision kernel $ B $, such that the following holds. 
	
	If $ \norm[\mathcal{H}^1_{p_0}]{h_0}=\varepsilon \leq \varepsilon_0 $ and $ \beta_0\leq \varepsilon_0 $, then we have for all $ t\geq 0 $
	\begin{align}\label{eq:SecondOrderHilbertExpDecay}
		\norm[\mathcal{H}^1_{p_0}]{h_t}\leq C'\left( \dfrac{\varepsilon}{(1+t)^2}+\dfrac{1}{Z_t(\beta_0)^2}\right) .
	\end{align}
	In addition, for any $ k,\, p\in\N $ and $ t\geq0 $ it holds
	\begin{align}\label{eq:FirstOrderHilbertExpDecay}
		\norm[H^k_p]{\dfrac{\bar{\mu}_t}{\sqrt{\mu}}} \leq \dfrac{C_{k,p}}{Z_t(\beta_0)}.
	\end{align}
	Finally, we have
	\begin{align}\label{eq:InvTempAsyBehav}
		\dfrac{1}{4}Z_t(\beta_0)\leq \eta_t=\nu_t \,\beta_t^{-\gamma/2} \leq 4Z_t(\beta_0).
	\end{align}
\end{thm}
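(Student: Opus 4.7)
The strategy is to view $h_t=f_t-\mu-\bar\mu_t$ as a small perturbation driven by a strongly dissipative linearisation around $\mu$ (strongly because $\eta_t=\nu_t\beta_t^{-\gamma/2}\to\infty$) and forced by a source of order $1/\eta_t$ coming from the Hilbert-type correction $\bar\mu_t$. I would begin by substituting the ansatz $f_t=\mu+\bar\mu_t+h_t$ into \eqref{eq:ModelEq}, using $Q(\mu,\mu)=0$ and the defining identity $\eta_t \Lin\bar\mu_t=-v\cdot A_tv\,\mu$, to obtain
\begin{align*}
\partial_t h_t + \eta_t \Lin h_t = \mathcal{S}_t + \mathcal{R}_t[h_t],
\end{align*}
where the source
\begin{align*}
\mathcal{S}_t := -\partial_t\bar\mu_t + \alpha_t^{(1)}(|v|^2-3)\mu + \div\bigl((L_t-\alpha_tI)v\,\bar\mu_t\bigr) + \eta_t Q(\bar\mu_t,\bar\mu_t)
\end{align*}
depends only on $\bar\mu_t$, and $\mathcal{R}_t[h_t]$ collects the drift $\div((L_t-\alpha_tI)v\,h_t)$, the bilinear cross terms $\eta_t(Q(\bar\mu_t,h_t)+Q(h_t,\bar\mu_t))$, the genuinely nonlinear piece $\eta_tQ(h_t,h_t)$, and the correction $\alpha_t^{(h)}(|v|^2-3)\mu$ with $\alpha_t^{(h)}:=\alpha_t-b_t-\alpha_t^{(1)}$. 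Orthogonality of $v\cdot A_tv\,\mu$ to $\ker\Lin$ in $L^2(\mu^{-1/2})$ gives $\bar\mu_t$ zero mass, momentum and energy, so by \eqref{eq:Normalization} the same holds for $h_t$, placing it in the orthogonal complement of $\ker\Lin$ where the spectral gap is available.

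Next, a direct inspection of \eqref{eq:HilbertExpFirstOrder3} combined with the smoothness of $\Lin^{-1}$ applied to the Schwartz function $v\cdot A_tv\,\mu$ (after dividing by $\sqrt\mu$) yields \eqref{eq:FirstOrderHilbertExpDecay}. Using assumption (I) (bounded $L_t'$ and $\nu_t'/\nu_t$) together with $\eta_t'/\eta_t=\nu_t'/\nu_t-\gamma\alpha_t$ (bounded on the bootstrap interval), the same argument bounds $\norm[\mathcal{H}^1_{p_0}]{\partial_t\bar\mu_t}\lesssim 1/\eta_t$; combined with $|\alpha_t^{(1)}|=a_t/(3\eta_t)\lesssim 1/\eta_t$ and the bilinear continuity $\norm[\mathcal{H}^1_{p_0}]{Q(\bar\mu_t,\bar\mu_t)}\lesssim \norm[\mathcal{H}^1_{p_0}]{\bar\mu_t}^2\lesssim 1/\eta_t^2$, one obtains $\norm[\mathcal{H}^1_{p_0}]{\mathcal{S}_t}\lesssim 1/\eta_t$.

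The core of the proof is a continuity/bootstrap argument. Set
\begin{align*}
T^\star := \sup\Bigl\{\,T\geq 0 : \tfrac14 Z_t(\beta_0)\leq \eta_t\leq 4Z_t(\beta_0)\ \text{and}\ \norm[\mathcal{H}^1_{p_0}]{h_t}\leq 2C'\bigl(\tfrac{\varepsilon}{(1+t)^2}+\tfrac{1}{Z_t(\beta_0)^2}\bigr),\ \forall t\in[0,T]\,\Bigr\}.
\end{align*}
Assumption (II) and the linear ODE defining $Z$ imply $Z_t(\beta_0)\approx \beta_0^{-\gamma/2}+(1+t)$ uniformly, and in particular $\eta_t\gtrsim \beta_0^{-\gamma/2}\gg 1$ throughout $[0,T^\star]$. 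Combining the polynomially-weighted $L^1$ stability of $\Lin$ from \cite{Tristani2014ExponentialConvergenceEquilHomogBoltz} with the $\mathcal{H}^1_{p_0}$ regularisation framework of \cite{HerauTononTristani2020RegEstCauchyInhomBoltzmann} (tailored to the derivative weight shift $2s$), a standard energy computation on the equation for $h$ yields
\begin{align*}
\tfrac12\tfrac{d}{dt}\norm[\mathcal{H}^1_{p_0}]{h_t}^2 + c\,\eta_t\,\norm[\mathcal{H}^1_{p_0}]{h_t}^2 \lesssim \norm[\mathcal{H}^1_{p_0}]{h_t}\norm[\mathcal{H}^1_{p_0}]{\mathcal{S}_t} + \bigl(1+\eta_t\norm[\mathcal{H}^1_{p_0}]{\bar\mu_t}+\eta_t\norm[\mathcal{H}^1_{p_0}]{h_t}\bigr)\norm[\mathcal{H}^1_{p_0}]{h_t}^2.
\end{align*}
Since $\eta_t\norm[\mathcal{H}^1_{p_0}]{\bar\mu_t}\lesssim 1$ and $\norm[\mathcal{H}^1_{p_0}]{h_t}\leq \varepsilon_0$, the non-dissipative quadratic and cubic pieces are absorbed into $(c/2)\eta_t\norm[\mathcal{H}^1_{p_0}]{h_t}^2$ for $\varepsilon_0$ small. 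A Duhamel integration then gives
\begin{align*}
\norm[\mathcal{H}^1_{p_0}]{h_t} \leq e^{-c\int_0^t\eta_r\,dr}\varepsilon + C\int_0^t e^{-c\int_s^t\eta_r\,dr}\,\norm[\mathcal{H}^1_{p_0}]{\mathcal{S}_s}\,ds,
\end{align*}
and using $\eta_s\approx Z_s$, $\norm[\mathcal{H}^1_{p_0}]{\mathcal{S}_s}\lesssim 1/Z_s$, and the Laplace-type asymptotic $\int_0^t e^{-c\int_s^t\eta}/Z_s\,ds\lesssim 1/(\eta_tZ_t)\approx 1/Z_t^2$ (main contribution near $s=t$), one obtains the improved bound $\norm[\mathcal{H}^1_{p_0}]{h_t}\leq C'(\varepsilon/(1+t)^2+1/Z_t(\beta_0)^2)$ for $C'$ chosen large enough, closing the bootstrap for $h$.

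Finally, subtracting $Z_t'=(\nu_t'/\nu_t-\gamma b_t)Z_t+\gamma a_t/3$ from the analogous ODE for $\eta_t$ obtained from \eqref{eq:ModelEq} via $\alpha_t=b_t+\alpha_t^{(1)}+\alpha_t^{(h)}$ gives
\begin{align*}
(\eta_t-Z_t(\beta_0))' = \bigl(\tfrac{\nu_t'}{\nu_t}-\gamma b_t\bigr)(\eta_t-Z_t(\beta_0)) - \gamma\eta_t\alpha_t^{(h)},
\end{align*}
with $|\alpha_t^{(h)}|\lesssim \norm[\mathcal{H}^1_{p_0}]{h_t}$. Under the bootstrap the forcing is bounded by $\eta_t\cdot(\varepsilon/(1+t)^2+1/Z_t^2)\lesssim 1/(1+t)$, whose antiderivative is $O(\log(1+t))\ll Z_t(\beta_0)$; Gronwall against a bounded integrating factor therefore improves the bootstrap on $\eta_t$ to $[Z_t/2,2Z_t]$, and together with the previous paragraph forces $T^\star=\infty$, proving \eqref{eq:SecondOrderHilbertExpDecay}, \eqref{eq:FirstOrderHilbertExpDecay} and \eqref{eq:InvTempAsyBehav}. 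The main obstacle is the energy estimate in the preceding paragraph: one has to combine the polynomially-weighted coercivity of $\Lin$ in $\mathcal{H}^1_{p_0}$ with bilinear continuity of $Q$ that is sharp with respect to the weight shift $2s$ between $L^2_{p_0}$ and $L^2_{p_0-2s}$, so that the large prefactor $\eta_t\to\infty$ in front of the bilinear terms is absorbed by the dissipative term of the same size rather than overwhelming it.
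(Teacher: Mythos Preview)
Your overall architecture --- derive the equation for $h$, bound the source by $1/\eta_t$, run a bootstrap coupling the decay of $h$ with the growth of $\eta_t$ --- matches the paper. The gap is in the energy step. The differential inequality you write,
\[
\tfrac12\tfrac{d}{dt}\norm[\mathcal{H}^1_{p_0}]{h_t}^2 + c\,\eta_t\,\norm[\mathcal{H}^1_{p_0}]{h_t}^2 \lesssim \ldots,
\]
is not available: $\Lin$ has its spectral gap in $L^2(\mu^{-1/2})$, not in the polynomially weighted space $L^2_{p_0}$. What one actually gets (Lemma~\ref{lem:RegEstLinearizedCollOp}) is
\[
-\dualbra{\Lin h}{h}_{L^2_{p_0}}\leq -c_\delta\norm[H^{s,*}_{p_0}]{h}^2 + C_\delta\norm[L^2]{h}^2,
\]
and the remainder $C_\delta\norm[L^2]{h}^2$ carries the large prefactor $\eta_t$. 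The paper's device is to interpolate $\norm[L^2]{h}\lesssim \norm[L^1]{h}^\theta\norm[\dot H^s]{h}^{1-\theta}$ and absorb the $\dot H^s$ part, leaving a residual $\eta_t\,C\norm[L^1]{h}^2$ on the right of the $\mathcal{H}^1_{p_0}$ energy inequality (see \eqref{eq:ProofH1pEst}). That residual is then controlled by a \emph{separate} Duhamel argument in $L^1_m$ (Proposition~\ref{pro:LoopInequality2}), using the semigroup decay of $e^{-t\Lin}$ on $L^1_m$ from \cite{Tristani2014ExponentialConvergenceEquilHomogBoltz}. The two estimates feed each other through an inner continuation argument: the $L^1_m$ Duhamel assumes the $\mathcal{H}^1_{p_0}$ bound (to estimate $Q(h,h)$ and the drift), and the $\mathcal{H}^1_{p_0}$ energy estimate assumes the $L^1_m$ bound (to kill the residual). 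You cite Tristani's result but then integrate a single inequality in $\mathcal{H}^1_{p_0}$; that is not where the $L^1$ semigroup enters, and without the two-tier structure the $\eta_t\norm[L^2]{h}^2$ term swamps the dissipation.

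A secondary point: your closing of the $\eta_t$--bootstrap by subtracting the ODEs and applying Gronwall with the linear coefficient $\nu_t'/\nu_t-\gamma b_t$ is delicate, since assumption~(I) only bounds this coefficient, not its sign, and the associated integrating factor may grow like $e^{Ct}$. The paper instead absorbs $-\gamma\alpha_t^2$ into the linear coefficient of the ODE for $\beta_t^{-\gamma/2}$ and compares multiplicatively with $B_t(\beta_0)$ (Lemma~\ref{lem:LoopInequality1}), giving directly $\eta_t/Z_t(\beta_0)\in[\,e^{-c\gamma\Omega R_T},\,e^{c\gamma\Omega R_T}\,]$ with $R_T=\int_0^T(\varepsilon/(1+t)^2+1/\eta_t^2)\,dt$; smallness of $\varepsilon_0$ then makes this ratio lie in $[1/2,2]$.
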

\begin{rem}
	In the above theorem, we merely obtain \eqref{eq:InvTempAsyBehav} concerning the inverse temperature. The precise asymptotics of $ \beta_t $ can be calculated with equation \eqref{eq:AsympEqInverseTemp}, when more information (than just assumption $ (II) $) on the functions $ b_t, \, a_t $ and $ \nu_t $ is available. We do this in the proof of Theorem \ref{thm:HomoenergticSol1} in Section \ref{sec:Application}. As we will see, the equation satisfied by $ h $ contains a source term, see \eqref{eq:ErrorEq}. This source term leads to a decay of order $ 1/Z_t(\beta_0)^2 $. The other terms lead to a decay of order $ \varepsilon/(1+t)^2 $. Hence, the combination of both yields \eqref{eq:SecondOrderHilbertExpDecay}.
\end{rem}
\begin{rem}
	In assumption $ (II) $ one can also assume $ Z_t(1)\approx (1+t)^r $ with $ r>1/2 $. The reason for the latter condition is that $ h_t $ will then be of order $ \mathcal{O}(t^{-2r}) $, which is integrable in time.
\end{rem}

\subsection{Collision-dominated analysis}
\label{subsec:CollDomAnalysis}
In this section we prove Theorem \ref{thm:MainTheorem} and for this reason prove several estimates.

\paragraph{Preparation.} Due to $ f_0\in \mathcal{H}^1_{p_0} $ there is a unique solution $ f $ to \eqref{eq:ModelEq} by Proposition~\ref{pro:Wellposedness}, which is smooth for positive times by Proposition \ref{pro:Regularity}. Moreover, note that the norm $ t\mapsto \norm[H^1_{p_0}]{f_t} $ is continuous for $ t\geq0 $. As in Theorem \ref{thm:MainTheorem} we set $ h_t=f_t-\mu-\bar{\mu}_t $. Correspondingly, let us decompose
\begin{align}\label{eq:DecompositionLagrangeMult}
	\begin{split}
		\alpha_t &= \alpha_t^0+\alpha_t^1+\alpha_t^2,
		\\
		\alpha_t^0&=\dfrac{1}{3}\int_{\R^3}v\cdot L_tv \, \mu \, dv = b_t, \quad \alpha_t^1=\dfrac{1}{3}\int_{\R^3}v\cdot L_tv \, \bar{\mu}_t \, dv, \quad \alpha_t^2=\dfrac{1}{3}\int_{\R^3}v\cdot L_tv \, h_t \, dv.
	\end{split}
\end{align}
In order to obtain the equation solved by $ h $, we plug the expansion $ f_t=\mu+\bar{\mu}_t+h_t $ into \eqref{eq:ModelEq}. Recall that $ \eta_t=\nu_t\beta_t^{-\gamma/2} $. We obtain with
\begin{align*}
	0 = \div\left( (L_t-\alpha_t^0)v \, \mu \right) - \eta_t \Lin \bar{\mu}_t = \div\left( A_tv \, \mu \right) - \eta_t \Lin \bar{\mu}_t,
\end{align*}
by definition of $ \bar{\mu} $, cf. \eqref{eq:HilbertExpFirstOrder3}, the equation
\begin{align}\label{eq:ErrorEq}
	\begin{split}
		\partial_th =& \big[ -\partial_t \bar{\mu} +\div(A_tv \, \bar{\mu}) - \alpha_t^1\, \div \left( v\, (\mu+\mu^1)\right) +\eta_t\, Q(\bar{\mu},\bar{\mu}) \big]
		\\
		&+ \big[ -\alpha_t^2 \, \div \left( v \, (\mu+\bar{\mu}) \right) + \div(A_tv \ h)-\alpha^1_t \, \div(v \, h) + \eta_t\, \left( Q(\bar{\mu},h)+Q(h,\bar{\mu}) \right) \big]
		\\
		&- \alpha^2_t\, \div(v \, h) + \eta_t \, Q(h,h)-\eta_t \,  \Lin h
		\\
		&=: S+\mathscr{R}h- \alpha^2_t\, \div(v \, h)+\eta_t \, Q(h,h)-\eta_t \, \Lin h.
	\end{split}
\end{align} 
Observe that $ h_t\mapsto \alpha^2_t $ is linear in the definition of $ \mathscr{R} $ and that $ S $, $ \mathscr{R} $ are time-dependent. Note also that $ h_t\in(\ker\Lin)^\perp $, since $ \bar{\mu}_t\in(\ker\Lin)^\perp $ and the fact that $ f_t $ has the same mass, momentum and energy as the Maxwellian $ \mu $. In particular, this implies that $ Q(h_t,h_t)\in(\ker\Lin)^\perp $ and
\begin{align*}
	S+\mathscr{R}h_t- \alpha^2_t\, \div(v \, h_t) \in(\ker\Lin)^\perp
\end{align*}
for all $ t\geq0 $.

\paragraph{Strategy.} The proof relies on the following two estimates:
\begin{align}
		\dfrac{1}{4}Z_t(\beta_0) &\leq \eta_t=\nu_t\beta_t^{-\gamma/2} \leq 4Z_t(\beta_0), \label{eq:LoopInequality1}
		\\
		\norm[\mathcal{H}^1_{p_0}]{h_t} &\leq \Omega\left( \dfrac{\varepsilon}{(1+t)^2}+\dfrac{1}{\eta_t^2}\right)  \label{eq:LoopInequality2}
\end{align}
for some constant $ \Omega>0 $. Note that both imply \eqref{eq:SecondOrderHilbertExpDecay}.

Note that $ \eqref{eq:LoopInequality1} $ together with assumption $ (II) $ in Theorem \ref{thm:MainTheorem} implies for $ \beta_0\leq 1 $
\begin{align}\label{eq:GrowthCollisionConstant}
	\eta_t\geq \dfrac{1}{4}Z_t(\beta_0) \geq \dfrac{1}{8}\bar{\eta}(\beta_0) +\dfrac{1}{8}Z_t(1) \geq c_0(\bar{\eta}(\beta_0)+t).
\end{align}
Here, the constant $ c_0>0 $ does not depend on $ \beta_0 $ and we defined
\begin{align*}
	\bar{\eta}(\beta_0):= \min_{t\geq0}Z_t(\beta_0).
\end{align*}
In fact, we assume $ \beta_0\leq \varepsilon_0\in(0,1) $ sufficiently small as in Theorem \ref{thm:MainTheorem}, so that $ \beta_0\leq 1 $ is always satisfied. Furthermore, note that $ \bar{\eta}(\beta_0)\to \infty $ as $ \beta_0\to 0 $. Hence, choosing $ \beta_0 $ small ensures that the factor $ \eta_t $ in front of the collision operator is large for all times. As a consequence the collision operator is always the dominant term.

We will prove that \eqref{eq:LoopInequality1} implies \eqref{eq:LoopInequality2} and the other way round. However, in order to choose the constant $ \Omega $ such that this loop can be closed, we need to take $ \varepsilon_0 $ sufficiently small. Recall that $ \norm[\mathcal{H}^1_{p_0}]{h_0}=\varepsilon\leq \varepsilon_0 $ and $ \beta_0\leq\varepsilon_0 $. 

\subsubsection{Estimates on $\bar{\mu}$ and $\eta_t$.}
Let us first give the following regularity properties for the first order approximation $ \bar{\mu} $ defined in \eqref{eq:HilbertExpFirstOrder}, which implies \eqref{eq:FirstOrderHilbertExpDecay}.
\begin{lem}\label{lem:FirstOrderEstimate}
	The function $ \bar{\mu}_t $ defined above satisfies for all $ q,k\in\N $
	\begin{align}\label{eq:LemFirstOrderEstimate1}
		\norm[H^k_q]{\bar{\mu}_t/\sqrt{\mu}} &\leq C_{q,k}\dfrac{\norm{A}}{\eta_t} \leq \dfrac{C_{q,k}}{\eta_t},
		\\\label{eq:LemFirstOrderEstimate2}
		\norm[H^k_q]{\partial_t \bar{\mu}_t/\sqrt{\mu}} &\leq C_{q,k}\dfrac{1}{\eta_t}\left( \dfrac{\norm{A}|\eta_t'|}{\eta_t}+\norm{\partial_tA} \right)\leq \dfrac{C_{q,k}}{\eta_t}.
	\end{align}
\end{lem}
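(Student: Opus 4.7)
The plan is to reduce both bounds to a Schwartz-class regularity statement for $\Lin^{-1}$ applied to a fixed finite family of Gaussian-weighted polynomials, and then to obtain \eqref{eq:LemFirstOrderEstimate2} by differentiating the explicit representation of $\bar{\mu}_t$ in time.

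First, since $\trace A_t = 0$, I would rewrite the source in the definition of $\bar{\mu}_t$ as
\begin{align*}
	-v\cdot A_t v\, \mu = -\sum_{i,j} A_{t,ij}\,\psi_{ij}, \qquad \psi_{ij}(v) := \Bigl(v_iv_j - \tfrac{1}{3}|v|^2\delta_{ij}\Bigr)\mu(v).
\end{align*}
A direct inner-product computation against each element of $\ker\Lin = \mathrm{span}\{\mu,v_k\mu,|v|^2\mu\}$ shows that every $\psi_{ij}$ lies in $(\ker\Lin)^\perp\cap L^2(\mu^{-1/2})$, so the time-independent profiles $\Phi_{ij} := \Lin^{-1}\psi_{ij}$ are well-defined in $(\ker\Lin)^\perp$. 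Since $\psi_{ij}/\sqrt{\mu}$ is a polynomial times $\sqrt{\mu}$, hence Schwartz, the non-cutoff regularity theory for $\Lin$ recalled in Section 1.2 (anisotropic coercivity combined with a standard bootstrap on the equation $\Lin\Phi_{ij}=\psi_{ij}$) implies that $\Phi_{ij}/\sqrt{\mu}$ is itself Schwartz, so $\norm[H^k_q]{\Phi_{ij}/\sqrt{\mu}}\leq C_{k,q}$ for every $k,q\in\N$.

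From the representation $\bar{\mu}_t = -\eta_t^{-1}\sum_{i,j}A_{t,ij}\Phi_{ij}$, the triangle inequality immediately yields
\begin{align*}
	\norm[H^k_q]{\bar{\mu}_t/\sqrt{\mu}} \leq \dfrac{C_{k,q}}{\eta_t}\norm{A_t},
\end{align*}
which together with assumption $(I)$ of Theorem \ref{thm:MainTheorem} gives \eqref{eq:LemFirstOrderEstimate1}. For \eqref{eq:LemFirstOrderEstimate2} I differentiate the same representation in $t$:
\begin{align*}
	\partial_t\bar{\mu}_t = -\dfrac{\eta_t'}{\eta_t}\,\bar{\mu}_t -\dfrac{1}{\eta_t}\sum_{i,j}(\partial_t A_t)_{ij}\,\Phi_{ij}.
\end{align*}
Applying \eqref{eq:LemFirstOrderEstimate1} to the first term and the uniform Schwartz bound to the second produces
\begin{align*}
	\norm[H^k_q]{\partial_t\bar{\mu}_t/\sqrt{\mu}} \leq \dfrac{C_{k,q}}{\eta_t}\Bigl(\dfrac{\norm{A_t}\,|\eta_t'|}{\eta_t}+\norm{\partial_t A_t}\Bigr),
\end{align*}
which is the first inequality. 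The final bound $\leq C_{k,q}/\eta_t$ follows from the identity $\eta_t'/\eta_t = \nu_t'/\nu_t - \gamma\alpha_t$, combined with assumption $(I)$ (which controls $\norm{A_t}$, $\norm{\partial_t A_t}$ and $|\nu_t'/\nu_t|$) and the a priori bound $|\alpha_t|\leq \norm{L_t}$, itself a consequence of the energy normalization \eqref{eq:Normalization}.

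The single non-trivial ingredient is the Schwartz-class statement for the profiles $\Phi_{ij}$: in the non-cutoff hard-potential regime one must simultaneously propagate Gaussian decay and gain smoothness for solutions of $\Lin\Phi=\psi$ when $\psi$ is Schwartz. This is where the structural properties of $\Lin$ really enter; the rest of the argument is purely algebraic manipulation of the explicit formula for $\bar{\mu}_t$.
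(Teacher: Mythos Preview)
Your proof is correct and follows essentially the same approach as the paper: both reduce the estimate to the fact that $L^{-1}$ (equivalently $\Lin^{-1}$ conjugated by $\sqrt{\mu}$) maps Schwartz data to Schwartz data, invoking the non-cutoff coercivity estimates of Alexandre et al.\ and Gressman--Strain. The only cosmetic difference is that you first decompose $v\cdot A_t v\,\mu$ into the fixed, time-independent basis $\psi_{ij}$ and work with the finitely many profiles $\Phi_{ij}=\Lin^{-1}\psi_{ij}$, so that the time derivative becomes a purely algebraic computation on the coefficients $A_{t,ij}$ and $\eta_t$; the paper instead differentiates the equation $L\tilde{\mu}_t=\eta_t^{-1}v\cdot A_t v\sqrt{\mu}$ in $t$ and applies the coercivity estimates a second time, which amounts to the same thing.
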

\begin{proof}
	First of all, note that $ \tilde{\mu}_t:=\bar{\mu}_t/\sqrt{\mu} $ satisfies the equation
	\begin{align}\label{eq:ProofLemFirstOrderEstimate}
		L\tilde{\mu}_t = \dfrac{1}{\eta_t}\, v\cdot A_tv\, \sqrt{\mu}
	\end{align}
	due to \eqref{eq:HilbertExpFirstOrder3}. Here, we used the notation $ Lg=\mu^{-1/2}\Lin[\sqrt{\mu}g] $. As mentioned in the introduction, the operator $ L $ is non-negative, self-adjoint on $ L^2(\R^3) $ with spectral gap (since here $ \gamma>0 $). It coincides with the operator $ \Lin $ on $ L^2(\mu^{-1/2}) $. Corresponding coercivity estimates are available in \cite{AlexandreEtAl2011GlobalExistenceFullRegBoltzmannEq,GressmanStrain2011GlobalClassicalSolBoltzmannEq}, in particular see \cite[Lemma 2.6]{GressmanStrain2011GlobalClassicalSolBoltzmannEq}. (In fact, in the case of $ \gamma>0 $ the operator $ L $ has a regularizing effect both in terms of weights and Sobolev regularity.) Since $ v\cdot A_tv\, \sqrt{\mu}\in H^k_p $ for all $ k, \, p\in\N $, these coercivity estimates allow to prove that $ \tilde{\mu}_t\in H^k_p $ for all $ k, \, p\in\N $ with the asserted bound in \eqref{eq:LemFirstOrderEstimate1}.
	
	The estimate \eqref{eq:LemFirstOrderEstimate2} follows from differentiating equation \eqref{eq:ProofLemFirstOrderEstimate} with respect to time. Note that $ \partial_t[v\cdot A_tv\, \sqrt{\mu}/\eta_t]\in (\ker L)^\perp $, since for all $ t\geq0 $ we have $ v\cdot A_tv\, \sqrt{\mu}/\eta_t\in (\ker L)^\perp $. Hence, the corresponding equation has a unique solution. The coercivity estimates mentioned before allow to prove the asserted bounds. Finally, note that, due to assumption $ (I) $ in Theorem~\ref{thm:MainTheorem}, we have $ \norm[C^1(0,\infty)]{A}<\infty $ and
	\begin{align}\label{eq:ProofTimeIntegralBound}
		\left| \dfrac{\eta_t'}{\eta_t} \right| \leq \left| \dfrac{\nu_t'}{\nu_t}  \right| + \gamma \left| \dfrac{\beta_t'}{2\beta_t} \right| \leq C(1+|\alpha_t|) \leq C(1+\norm[L^1_2]{f_t})\leq C.
	\end{align}
\end{proof}
Let us now prove that \eqref{eq:LoopInequality2} implies \eqref{eq:LoopInequality1}.
\begin{lem}\label{lem:LoopInequality1}
	Assume that $ h_t $ satisfies \eqref{eq:LoopInequality2} for a constant $ \Omega>0 $ on some interval $ [0,T] $. Then, we have on $ [0,T] $
	\begin{align*}
		\exp\left( -c\gamma \, 
		\Omega\, R_T(\varepsilon,\beta_0)\right)\, Z_t(\beta_0)
		&\leq \eta_{t} \leq \exp\left( c\gamma \, \Omega\, R_T(\varepsilon,\beta_0) \right) \, Z_t(\beta_0),
		\\
		R_T(\varepsilon,\beta_0)&:=\int_0^T\left( \dfrac{\varepsilon}{(1+t)^2}+\dfrac{1}{\eta_t^2} \right) \, dt
	\end{align*}
	for some constant $ c>0 $.
\end{lem}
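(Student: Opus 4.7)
The plan is to introduce $u_t := \log(\eta_t/Z_t(\beta_0))$, derive an ODE for $u_t$ whose nonlinearity has a favourable sign, and then exploit the hypothesis on $h_t$ to bound $u_t$ in terms of $R_T(\varepsilon,\beta_0)$.

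First, differentiating $\log\eta_t = \log\nu_t - \tfrac{\gamma}{2}\log\beta_t$ and using the identity $\beta_t'/(2\beta_t) = \alpha_t$ from \eqref{eq:ModelEq} together with the decomposition \eqref{eq:DecompositionLagrangeMult}, I obtain $(\log\eta_t)' = \nu_t'/\nu_t - \gamma(b_t + \alpha_t^1 + \alpha_t^2)$. A short computation using $\bar{\mu}_t \perp \ker\Lin$ in $L^2(\mu^{-1/2})$ (which makes $\int|v|^2\bar{\mu}_t\,dv$ vanish, so only the trace-free part $A_t$ contributes) gives the identity $\alpha_t^1 = -a_t/(3\eta_t)$ with $a_t$ as in \eqref{eq:ConstantFirstOrderHilbertExp}. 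On the other hand $Z_t = \nu_t B_t$ with $B_t$ solving the ODE preceding \eqref{eq:GrowthInversTemp}, so $(\log Z_t)' = \nu_t'/\nu_t - \gamma b_t + \gamma a_t/(3Z_t)$. Subtracting yields
\[ u_t' = \frac{\gamma a_t}{3 Z_t}\bigl(e^{-u_t} - 1\bigr) - \gamma\alpha_t^2, \qquad u_0 = 0. \]

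The crucial structural observation is that $a_t \geq 0$ (since $\Lin$ is non-negative and self-adjoint on $(\ker\Lin)^\perp$) and $u(e^{-u}-1)\leq 0$ for every $u\in\R$, so multiplication by $u_t$ gives
\[ \tfrac{1}{2}\tfrac{d}{dt}u_t^2 = u_t u_t' \leq -\gamma u_t\alpha_t^2 \leq \gamma|u_t||\alpha_t^2|. \]
This yields the pointwise differential inequality $\tfrac{d}{dt}|u_t| \leq \gamma|\alpha_t^2|$ (made rigorous via the standard trick of differentiating $\sqrt{u_t^2 + \delta}$ and sending $\delta\to 0$), and integrating from $0$ to $t\leq T$ produces $|u_t| \leq \gamma\int_0^T|\alpha_s^2|\,ds$.

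Finally, because $p_0 > 7/2$, the Cauchy--Schwarz inequality with the weight $\langle v\rangle^{2-p_0}$ gives $|\alpha_s^2| \leq \tfrac{1}{3}\|L_s\|\int|v|^2|h_s|\,dv \leq C\|h_s\|_{\mathcal{H}^1_{p_0}}$, so the hypothesis \eqref{eq:LoopInequality2} yields $|u_t| \leq c\gamma\Omega R_T(\varepsilon,\beta_0)$ for a constant $c$ depending only on $\sup_t\|L_t\|$ and $p_0$; exponentiating gives the two-sided bound claimed in the lemma. I expect the main (and essentially only) obstacle to be spotting the dissipative sign structure of the ODE for $u_t$, namely that the nonlinear term $(\gamma a_t/(3Z_t))(e^{-u_t}-1)$ never amplifies $u_t$; once that observation is made, the moment estimate on $\alpha_t^2$ and the time integration are both routine.
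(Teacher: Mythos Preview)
Your argument is correct, but it takes a genuinely different route from the paper's proof. The paper does not pass to the logarithmic variable $u_t=\log(\eta_t/Z_t)$; instead it integrates the linear ODE \eqref{eq:ProofInversTempEq} for $\beta_t^{-\gamma/2}$ directly via the integrating factor, obtaining the explicit representation
\[
\eta_t=\nu_t\beta_t^{-\gamma/2}
=\beta_0^{-\gamma/2}\,\nu_t\,e^{-\gamma\int_0^t(b_s+\alpha_s^2)\,ds}
+\int_0^t\frac{\gamma a_s\,\nu_t}{3\nu_s}\,e^{-\gamma\int_s^t(b_r+\alpha_r^2)\,dr}\,ds,
\]
and then compares this term-by-term with the formula \eqref{eq:GrowthInversTemp} for $Z_t(\beta_0)$. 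Since both summands in $Z_t(\beta_0)$ are non-negative and the only difference is the extra factor $e^{-\gamma\int_s^t\alpha_r^2\,dr}$, the bound $\bigl|\int_s^t\alpha_r^2\,dr\bigr|\le c\,\Omega\,R_T(\varepsilon,\beta_0)$ can be pulled outside to give the two-sided estimate.

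Your approach is slightly more conceptual: by working with the ratio $\eta_t/Z_t$ you convert the problem into a scalar ODE whose nonlinear part has a built-in sign that damps $u_t$, so only the forcing $-\gamma\alpha_t^2$ survives in the estimate. This avoids writing out Duhamel's formula and makes no explicit use of the positivity of the individual summands in $Z_t$. The paper's approach, on the other hand, is more elementary in that it never needs the observation $u(e^{-u}-1)\le 0$; it simply reads off the comparison from two explicit integral formulas. Both proofs ultimately rest on the same moment bound $|\alpha_t^2|\lesssim\|h_t\|_{\mathcal{H}^1_{p_0}}$ and the same time integration.
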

In the final step, we will choose $ \varepsilon,\, \beta_0\leq \varepsilon_0 $ and hence $ R_T(\varepsilon,\beta_0) $ small enough to close the continuation argument.
\begin{proof}[Proof of Lemma \ref{lem:LoopInequality1}]
	Due to the equation
	\begin{align*}
			\dfrac{\beta_t'}{2\beta_t} = \alpha_t = \alpha_t^0+\alpha_t^1+\alpha_t^2
	\end{align*}
	we conclude with \eqref{eq:DecompositionLagrangeMult}
	\begin{align}\label{eq:ProofInversTempEq}
		\dfrac{d}{dt}\beta_t^{-\gamma/2} = -\gamma \beta_t^{-\gamma/2} + \dfrac{\gamma\, a_t}{3\nu_t} -\gamma \,  \alpha_t^2 \, \beta_t^{-\gamma/2}.
	\end{align}
	Recall the definition of $ a_t $ in \eqref{eq:ConstantFirstOrderHilbertExp}. Observe that due to \eqref{eq:LoopInequality2}
	\begin{align*}
		\alpha_t^2= \dfrac{1}{3}\int_{\R^3}v\cdot L_tv\, h_t\, dv \leq \norm[L^\infty]{L}\norm[L^1_2]{h_t}\leq c\norm[\mathcal{H}^1_{p_0}]{h_t}
	\end{align*}
	and hence
	\begin{align*}
		\int_0^T \alpha^2_t\, dt \leq c \, 
		\Omega\, R_T(\varepsilon,\beta_0).
	\end{align*}
	We integrate \eqref{eq:ProofInversTempEq} to obtain the claim.
\end{proof}

\subsubsection{Estimate on the error term}
Here, we prove that \eqref{eq:LoopInequality1} implies \eqref{eq:LoopInequality2}. This is more involved and relies on several estimates and known results. We split the analysis in the estimates in the $ L^2 $-framework and the estimates in the $ L^1 $-framework. To this end, let us define 
\begin{align}\label{eq:Defintionm}
	m:=p_0-2-4s-3/2>2,
\end{align}
which will be used as a weight in the $ L^1 $ estimates. Let us note that $ \norm[L^1_m]{h_0}\leq C_*\varepsilon $ for some constant $ C_* $ by our assumption on $ h_0 $ in Theorem \ref{thm:MainTheorem}.

\paragraph{Estimates in $ L^2 $-framework.} Here we discuss the estimates of solutions $ h $ to \eqref{eq:ErrorEq} in the space $ \mathcal{H}^1_{p_0} $. Due to the angular singularity in the collision operator, we are led to use the following (homogeneous) anisotropic norm 
\begin{align*}
		\norm[\dot{H}^{s,*}]{g}^2 := \int_{\R^3}\int_{\R^3}\int_{S^2} b_\delta(n\cdot \sigma )\mu_* \left\langle v_* \right\rangle^{-\gamma} \left( g' \left\langle v' \right\rangle^{\gamma/2} -  g \left\langle v \right\rangle^{\gamma/2} \right) ^2\, d\sigma dv_*dv.
\end{align*}
Here, we defined $ b_\delta(\cos\theta) = \chi(\theta/\delta)b(\cos\theta) $ for a smooth function $ \chi $ with $ \ind_{[-1,1]}\leq \chi \leq \ind_{[-2,2]} $. The parameter $ \delta>0 $ will be fixed such that Lemma \ref{lem:RegEstLinearizedCollOp} holds. Finally, we also define the following weighted anisotropic norm
\begin{align*}
		\norm[H^{s,*}_p]{g}^2= \norm[L^2_{p+\gamma/2}]{g}^2 + \norm[\dot{H}^{s,*}]{g\left\langle \cdot \right\rangle^p}^2.
\end{align*}
\begin{rem}
	Let us mention that the above anisotropic norm has been introduced in  \cite{HerauTononTristani2020RegEstCauchyInhomBoltzmann} following works by Alexandre et. al. (see \cite{AlexandreEtAl2011GlobalExistenceFullRegBoltzmannEq}). A different anisotropic norm was used in \cite{GressmanStrain2011GlobalClassicalSolBoltzmannEq}. We refer to \cite{He2018SharpBoundsBoltzmannLandau} for a discussion and further estimates of the Boltzmann collision operator in anisotropic spaces.
\end{rem}
The following estimate relates the space $ H^{s,*}_p $ to the standard fractional Sobolev spaces, cf. \cite[Lemma 2.1]{HerauTononTristani2020RegEstCauchyInhomBoltzmann}.
\begin{lem} 
	For any $ k\geq 0 $ and all $ g\in H^s_{k+\gamma/2+s} $ it holds
	\begin{align*}
			\delta^{2-2s}\norm[H^s_{k+\gamma/2}]{g}\lesssim \norm[H^{s,*}_{k}]{g} \lesssim \norm[H^s_{k+\gamma/2+s}]{g}.
	\end{align*}
\end{lem}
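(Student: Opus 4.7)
The plan is first to absorb the polynomial weights into a single auxiliary function. Setting $F(v):=g(v)\left\langle v\right\rangle^{k+\gamma/2}$, one has $\norm[L^2]{F}^2 = \norm[L^2_{k+\gamma/2}]{g}^2$, and a direct computation rewrites the anisotropic seminorm as
\begin{align*}
\norm[\dot{H}^{s,*}]{g\left\langle\cdot\right\rangle^k}^2 = \iiint b_\delta(n\cdot\sigma)\,\mu(v_*)\,\left\langle v_*\right\rangle^{-\gamma}\bigl(F(v')-F(v)\bigr)^2\, d\sigma\, dv_*\, dv.
\end{align*}
Since $\norm[H^s_{k+\gamma/2}]{g} = \norm[H^s]{F}$ and $\norm[H^s_{k+\gamma/2+s}]{g} = \norm[H^s]{F\left\langle\cdot\right\rangle^s}$, the lemma reduces to a two-sided estimate of this double integral (plus $\norm[L^2]{F}^2$) by standard fractional Sobolev norms of $F$ or $F\left\langle\cdot\right\rangle^s$.

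\textbf{Upper bound.} For the right-hand inequality, on the support of $b_\delta$ the deviation angle satisfies $\theta \leq 2\delta$, so $|v'-v|=|v-v_*|\sin(\theta/2)$ is small. For each fixed $v_*$, I would change variables $\sigma\mapsto u := v'-v$, sending the spherical cap around $n$ to a tangent disk of radius $\sim |v-v_*|\delta$; using $|u|\sim |v-v_*|\theta/2$, the singular measure $b_\delta(\cos\theta)\sin\theta\, d\theta\, d\phi \sim \theta^{-1-2s}\ind_{\theta\leq 2\delta}\, d\theta\, d\phi$ becomes, in terms of the tangent-area element $dA(u)$, a kernel of order $|v-v_*|^{2s}|u|^{-2-2s}\ind_{|u|\leq c|v-v_*|\delta}$. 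Bounding the quadratic difference by a Gagliardo density for $F$ and then integrating in $v_*$, the Gaussian $\mu(v_*)$ absorbs the factor $|v-v_*|^{2s}\left\langle v_*\right\rangle^{-\gamma}$ at the cost of an extra weight $\left\langle v\right\rangle^{2s}$ on the $v$-integration, producing the required bound by $\norm[\dot{H}^s]{F\left\langle\cdot\right\rangle^s}^2$.

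\textbf{Lower bound.} For the converse, the same change of variables gives, for each $(v,v_*)$, a kernel supported on the two-dimensional tangent disk at $n=(v-v_*)/|v-v_*|$. The crucial observation is that when $v_*$ sweeps $\R^3$ against $\mu(v_*)\left\langle v_*\right\rangle^{-\gamma}$, the direction $n$ effectively covers $S^2$ uniformly, so the tangent-disk contributions \emph{average} into a genuinely isotropic three-dimensional Gagliardo kernel, yielding
\begin{align*}
\norm[\dot{H}^{s,*}]{g\left\langle\cdot\right\rangle^k}^2 \gtrsim \iint_{|u|\leq C\delta}\dfrac{\bigl(F(v+u)-F(v)\bigr)^2}{|u|^{3+2s}}\, du\, dv.
\end{align*}
This restricted Gagliardo integral controls by Plancherel the high-frequency part $\int_{|\xi|\geq 1/\delta}|\xi|^{2s}|\mathscr{F}[F](\xi)|^2\, d\xi$ of $\norm[\dot{H}^s]{F}^2$, while the low-frequency part is trivially bounded by $\delta^{-2s}\norm[L^2]{F}^2$. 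Combining the two and multiplying by $\delta^{4-4s}$ (using $s<1/2$, so that $\delta^{4-6s}\leq 1$ for $\delta\leq 1$) gives $\delta^{4-4s}\norm[H^s]{F}^2 \lesssim \norm[H^{s,*}_k]{g}^2$, which is the asserted inequality after taking square roots.

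\textbf{Main obstacle.} The hard part is the directional-to-isotropic averaging step in the lower bound: one must quantitatively justify that integration over $v_*$ against the Gaussian converts the tangent-disk kernels into an isotropic fractional Laplacian kernel, with constants independent of $\delta$ aside from the explicit truncation $\ind_{|u|\leq C\delta}$. This is the essence of the coercivity/cancellation machinery pioneered by Alexandre--Desvillettes--Villani--Wennberg, and the most efficient route is to invoke directly the reference cited above for this lemma rather than to reprove it.
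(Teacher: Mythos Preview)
The paper does not prove this lemma; it simply cites \cite[Lemma~2.1]{HerauTononTristani2020RegEstCauchyInhomBoltzmann}. Your final paragraph is therefore exactly right: invoking the reference is what the paper does, and your sketch is an attempt to outline what that proof contains rather than a competing argument.

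Your reduction via $F=g\left\langle \cdot\right\rangle^{k+\gamma/2}$ is correct, and the lower-bound strategy (tangent-disk kernel, ADVW directional averaging to an isotropic restricted Gagliardo integral, Fourier splitting into high and low frequencies to extract the $\delta$-power) is the right skeleton. The bookkeeping you do to land on $\delta^{4-4s}$ is wasteful --- the same argument actually yields $\delta^{2s}$, which is stronger for $s<1/2$ --- but it does recover the stated inequality.

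There is, however, a gap in your upper-bound sketch that you do not flag. After your change of variables you have, for each fixed $v_*$, a \emph{two-dimensional} kernel $|u|^{-2-2s}$ supported on the disk orthogonal to $n=(v-v_*)/|v-v_*|$. The sentence ``bounding the quadratic difference by a Gagliardo density for $F$ and then integrating in $v_*$'' hides the same directional-to-isotropic passage you identify as the obstacle for the lower bound: one still has to explain why averaging these planar slices over $v_*$ (equivalently over $n$ against a Gaussian weight in $r=|v-v_*|$) is controlled by the three-dimensional $H^s$ seminorm of $F\left\langle\cdot\right\rangle^s$. This is not automatic, and in the cited reference the upper bound is typically handled either by an explicit co-area/Carleman-type computation or by Fourier-side arguments rather than by the heuristic you give. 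So your diagnosis that the averaging is ``the hard part'' applies to both inequalities, not only the lower one.
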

We fix $ \delta>0 $ in the definition of the norm of $ \dot{H}^{s,*} $ such that the following lemma holds, cf. \cite[Lemma 4.2]{HerauTononTristani2020RegEstCauchyInhomBoltzmann}.
\begin{lem}\label{lem:RegEstLinearizedCollOp}
	Let $ k\geq \gamma/2+3+2s $, then for sufficiently small $ \delta>0 $ it holds
	\begin{align*}
			-\dualbra{\Lin h}{h}\leq -c_\delta\norm[H^{s,*}_k]{h}^2 + C_\delta \norm[L^2]{h}^2
	\end{align*}
	for some constants $ c_\delta, \, C_\delta>0 $ depending on $ \delta>0 $.
\end{lem}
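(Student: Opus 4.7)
The plan is to derive this coercivity estimate by following the strategy of Alexandre--Morimoto--Ukai--Xu--Yang and adapting it to polynomial weights as in Herau--Tonon--Tristani. The starting point is to split
\[
-\dualbra{\Lin h}{h} = \dualbra{Q(\mu,h)+Q(h,\mu)}{h\left\langle \cdot \right\rangle^{2k}}
\]
(with the weight absorbed into the test function) and decompose the angular kernel as $b = b_\delta + (b-b_\delta)$. The truncated remainder $b-b_\delta$ is bounded, so the contribution of $Q_{b-b_\delta}$ can be absorbed in the lower-order $C_\delta\norm[L^2]{h}^2$ term via classical cutoff estimates. All the work lies in the singular part with kernel $b_\delta$, which is the only piece that can produce anisotropic regularization.

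First, for the $-Q(\mu,h)$ contribution I would apply the pre-post-collisional change of variables and write
\[
\mu'_* h' \left\langle v' \right\rangle^{2k} - \mu_* h\left\langle v \right\rangle^{2k}
= \mu'_*\bigl(h'\left\langle v' \right\rangle^k - h\left\langle v \right\rangle^k\bigr)\left\langle v' \right\rangle^k + h\left\langle v \right\rangle^k\bigl(\mu'_*\left\langle v' \right\rangle^k - \mu_*\left\langle v \right\rangle^k\bigr),
\]
and symmetrize in $v\leftrightarrow v_*$. After the usual algebraic manipulation the leading piece produces precisely the anisotropic quantity $\norm[\dot H^{s,*}]{h\left\langle\cdot\right\rangle^k}^2$ appearing in $\norm[H^{s,*}_k]{h}^2$, while the mismatch between $\left\langle v'\right\rangle^k$ and $\left\langle v\right\rangle^k$ is controlled by $|\left\langle v'\right\rangle^k-\left\langle v\right\rangle^k|\lesssim \sin(\theta/2)\left\langle v\right\rangle^{k-1}\left\langle v_*\right\rangle^k$ and handled by Cauchy--Schwarz, producing a small fraction of the anisotropic norm (absorbable into $c_\delta$) plus a harmless $C_\delta\norm[L^2]{h}^2$.

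Second, for the $-Q(h,\mu)$ contribution I would use the cancellation lemma to replace, modulo good error terms, the loss term by an operator that resembles a Maxwellian convolution, bounded in weighted $L^2$ by the $\gamma$-homogeneity of the kernel and the condition $k\geq \gamma/2+3+2s$. The factor $\left\langle v_*\right\rangle^{-\gamma}$ appearing in the definition of $\norm[\dot H^{s,*}]{\cdot}$ is exactly what is needed to match the weight produced by $|v-v_*|^\gamma$ after the translation-invariant estimate in $v$. Summing both contributions gives the announced bound with constants $c_\delta,C_\delta$ depending on the truncation scale.

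The main obstacle will be the commutator between the polynomial weight $\left\langle v\right\rangle^k$ and the singular angular integration at grazing collisions: writing $\left\langle v'\right\rangle^k - \left\langle v\right\rangle^k = \frac{1}{2}\nabla\left\langle v\right\rangle^k\cdot(v'-v) + \mathcal{O}(|v'-v|^2)$, the first-order term vanishes after integration over $\sigma\in S^2$ by symmetry, and the second-order remainder gains a factor $\sin^2(\theta/2)$ that compensates the $\theta^{-1-2s}$ singularity since $s<1/2$. Verifying that the corresponding moments are bounded by $\norm[L^2]{h}^2$ (plus a small multiple of $\norm[H^{s,*}_k]{h}^2$) requires the weight hypothesis $k\geq \gamma/2+3+2s$, since the Gaussian $\mu$ absorbs only the part of the $\left\langle v_*\right\rangle$-moments that exceeds the $\gamma/2$ gain provided by the anisotropic norm. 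Once these weighted commutator estimates are established, choosing $\delta>0$ small fixes the relative size of $c_\delta$ and $C_\delta$ so that the coercivity inequality holds.
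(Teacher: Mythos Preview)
The paper does not give its own proof of this lemma: it simply cites \cite[Lemma 4.2]{HerauTononTristani2020RegEstCauchyInhomBoltzmann} and fixes $\delta$ so that the cited result applies. What you have written is essentially an outline of the argument in that reference (which in turn adapts the AMUXY machinery to polynomial weights), so your approach is the same as the one underlying the paper's citation.

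Your sketch is correct in spirit. The coercive contribution indeed comes from $\dualbra{Q(\mu,h)}{h}_{L^2_k}$, which after pre--post symmetrization produces exactly the anisotropic piece $\norm[\dot H^{s,*}]{h\left\langle\cdot\right\rangle^k}^2$; the weight commutator $\left\langle v'\right\rangle^k-\left\langle v\right\rangle^k$ is handled by Taylor expansion and the symmetry cancellation you describe, and this is where the threshold $k\geq \gamma/2+3+2s$ is used. The cutoff remainder with kernel $b-b_\delta$ is bounded and contributes only to $C_\delta\norm[L^2]{h}^2$. One small imprecision: for the $Q(h,\mu)$ piece the cancellation lemma is not quite the right tool, since $h$ sits in the $v_*$-slot; instead one exploits directly the smoothness and Gaussian decay of $\mu$ (via $|\mu'-\mu|\lesssim \sin(\theta/2)|v-v_*|\sqrt{\mu'\mu}$) to show this term is of lower order and absorbable. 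With that correction your plan matches the cited proof.
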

Let us also recall the following estimates on the collision operator, cf. \cite[Lemma 2.3]{HerauTononTristani2020RegEstCauchyInhomBoltzmann}.
\begin{lem}\label{lem:L2EstimateCollOp}
	Let $ k>\gamma/2 +2+2s $.
	\begin{enumerate}[(i)]
		\item If $ \ell >\gamma +1+3/2 $ we have
			\begin{align*}
				|\dualbra{Q(f,g)}{h}_{L^2_k}| \lesssim \norm[L^2_\ell]{f}\norm[H^{s_1}_{N_1+k}]{g}\norm[H^{s_2}_{N_2+k}]{h} + \norm[L^2_{\gamma/2+k}]{f}\norm[L^2_{\ell}]{g}\norm[L^2_{\gamma/2+k}]{h},
			\end{align*}
			where $ s_1,\, s_2\in [0,2s] $, $ s_1+s_2=2s $ and $ N_1\geq\gamma/2 $, $ N_2\geq0 $, $ N_1+N_2=\gamma+2s $.
	\item If $ \ell > 4+3/2 $ we have
		\begin{align*}
				|\dualbra{Q(f,g)}{g}_{L^2_k}| \lesssim \norm[L^2_\ell]{f}\norm[H^{s,*}_{k}]{g}^2+ \norm[L^2_{\gamma/2+k}]{f}\norm[L^2_{\ell}]{g}\norm[L^2_{\gamma/2+k}]{g}.
		\end{align*}
	\end{enumerate}
\end{lem}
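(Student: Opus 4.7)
The plan is to work from the symmetrized weak form
\[
\langle Q(f,g),\varphi\rangle_{L^2_k}=\int_{\mathbb{R}^3\times\mathbb{R}^3\times S^2} B\,f_*g\bigl[\varphi(v')\langle v'\rangle^{2k}-\varphi(v)\langle v\rangle^{2k}\bigr]\,d\sigma dv_*dv,
\]
obtained by the pre-post collisional change of variables applied to $\int Q(f,g)\,\varphi\langle v\rangle^{2k}\,dv$, and to split the bracket as $(\varphi'-\varphi)\langle v'\rangle^{2k}+\varphi\bigl(\langle v'\rangle^{2k}-\langle v\rangle^{2k}\bigr)$. This divides the analysis into a \emph{difference term}, carrying the angular singularity, and a \emph{weight-commutator term}. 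Both (i) and (ii) will be handled by this dichotomy, the difference being how the singular difference term is treated.

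The weight commutator is the easier piece. Using $|v'-v|=|v-v_*|\sin(\theta/2)$ and Taylor expansion,
\[
\bigl|\langle v'\rangle^{2k}-\langle v\rangle^{2k}\bigr|\lesssim\sin(\theta/2)|v-v_*|\bigl(\langle v\rangle+\langle v_*\rangle\bigr)^{2k-1},
\]
and the angular integrability $\int b(\cos\theta)\sin(\theta/2)\sin\theta\,d\theta<\infty$ follows from (A-2) since $s<1/2$. Combined with the pre-factor $|v-v_*|^\gamma$ and Cauchy-Schwarz in $(v,v_*)$, this produces the second, non-singular term on the right-hand side of both (i) and (ii). The thresholds on $\ell$ are dictated by the need to absorb the resulting powers of $\langle v_*\rangle$ on $f_*$ together with the margin $3/2$ coming from $L^2$-integration in $v_*\in\mathbb{R}^3$.

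For the difference term in part (i), the angular singularity is distributed between $g$ and $h$ by writing $b=b^{1/2}\cdot b^{1/2}$ and pairing with $\theta^{-s_2}\cdot\theta^{s_2}$ (using $s_1+s_2=2s$), then applying Cauchy-Schwarz in $\sigma$. Via Bobylev's Fourier representation, or equivalently the Carleman change of variables, the two resulting half-integrals are identified with the fractional Sobolev norms $\|g\|_{H^{s_1}_{N_1+k}}$ and $\|h\|_{H^{s_2}_{N_2+k}}$; the moment split $N_1+N_2=\gamma+2s$ arises from redistributing $|v-v_*|^\gamma$ and $\langle v'\rangle^{2k}$ using $\langle v'\rangle\leq\langle v\rangle+\langle v_*\rangle$, and a final Cauchy-Schwarz in $v_*$ extracts $\|f\|_{L^2_\ell}$.

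In part (ii), the diagonal choice $\varphi=g$ is exploited via a symmetrization in $v\leftrightarrow v'$: up to a remainder controlled by the smooth cut-off complement $b-b_\delta$ (which is treated by Lemma \ref{lem:CoercivityUpperBound}-type estimates and absorbed into the non-singular term), the difference piece equals
\[
\tfrac12\int B_\delta\,f_*\bigl(g(v')\langle v'\rangle^k-g(v)\langle v\rangle^k\bigr)^2\,d\sigma dv_*dv,
\]
which, after factoring $|v-v_*|^\gamma\lesssim\langle v_*\rangle^\gamma(\langle v\rangle^\gamma+\langle v'\rangle^\gamma)$ and redistributing the weight $\langle v\rangle^{\gamma/2}$ onto $g$, matches $\|f\|_{L^2_\ell}\|g\|_{\dot H^{s,*}_k}^2$ by the very definition of the anisotropic norm. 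The main obstacle I foresee is the precise weight bookkeeping at each redistribution step: ensuring that every use of $\langle v'\rangle\leq\langle v\rangle+\langle v_*\rangle$ leaves at most $\ell$ powers of $\langle v_*\rangle$ on $f_*$ is what forces the sharp thresholds $\ell>\gamma+1+3/2$ in (i) and $\ell>4+3/2$ in (ii), and making the cancellation in the weight commutator yield the $\sin^2(\theta/2)$ rather than $\sin(\theta/2)$ decay (needed for robustness as $s\uparrow 1/2$) requires an extra odd-symmetry step that complicates the computation.
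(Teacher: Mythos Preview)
The paper does not prove this lemma at all: it is stated as a recollection with a citation, ``cf.\ \cite[Lemma 2.3]{HerauTononTristani2020RegEstCauchyInhomBoltzmann}''. So there is no proof in the paper to compare against; the intended ``proof'' is simply to invoke that reference.

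Your sketch is in the spirit of how such trilinear estimates are actually established in the cited work and its predecessors (Alexandre et al.), namely via the pre-/post-collisional weak form, a split into a singular difference piece and a weight-commutator piece, and then either a Fourier/Carleman identification with fractional Sobolev norms (for (i)) or a direct identification with the anisotropic seminorm (for (ii)). A few cautionary remarks: the ``odd-symmetry step'' you allude to for upgrading $\sin(\theta/2)$ to $\sin^2(\theta/2)$ in the weight commutator is indeed the standard trick (subtracting the $\sigma$-average of the gradient term, which vanishes by symmetry), and it is genuinely needed to get an estimate that does not degenerate as $s\uparrow 1/2$; without it your commutator bound is only valid for $s<1/2$, which happens to suffice here under (A-2) but is not the sharp argument. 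Also, your handling of the difference term in (i) via ``$b^{1/2}\cdot b^{1/2}$ and pairing with $\theta^{-s_2}\cdot\theta^{s_2}$'' is too schematic to be a proof: the actual argument requires a Littlewood--Paley decomposition in the angular variable (or an equivalent dyadic decomposition of $b$) together with the Carleman representation to correctly identify the isotropic $H^{s_1}$, $H^{s_2}$ norms with the right weight exponents $N_1,N_2$. Finally, for (ii) the symmetrization you describe does not quite produce the clean square you wrote, since the weight $\langle v'\rangle^{2k}$ is not symmetric under $v\leftrightarrow v'$; one must first commute the weight past the difference (generating another commutator term) before the square structure appears. These are all standard but nontrivial steps carried out in detail in \cite{HerauTononTristani2020RegEstCauchyInhomBoltzmann}, which is why the present paper simply cites the result.
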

Finally, let us recall the following interpolation estimate. It can be proved using Fourier transform and a splitting in small respectively large frequencies.
\begin{lem}\label{lem:Interpolation}
	For any $ s> r\geq0 $ we have
	\begin{align*}
		\norm[\dot{H}^r]{g}\lesssim \norm[L^1]{g}^\theta\norm[\dot{H}^s]{g}^{1-\theta}
	\end{align*}
	with $ \theta = (2s-2r)/(2s+3) $.
\end{lem}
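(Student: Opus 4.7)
The plan is to use the Plancherel identity and split the frequency integral at an optimal radius. Writing
\begin{align*}
\norm[\dot{H}^r]{g}^2 = \int_{\R^3}|\xi|^{2r}|\F[g](\xi)|^2\,d\xi,
\end{align*}
I would fix a parameter $R>0$ to be chosen later and decompose the integration domain into the low-frequency piece $\{|\xi|\leq R\}$ and the high-frequency piece $\{|\xi|>R\}$. The point is that on the low piece the $L^1$ norm of $g$ provides control of $\F[g]$ in $L^\infty$, while on the high piece the extra negative power of $|\xi|$ can be absorbed into the $\dot H^s$ norm.

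For the low-frequency contribution I would use the standard bound $\norm[L^\infty]{\F[g]}\leq \norm[L^1]{g}$ together with the explicit spherical integration $\int_{|\xi|\leq R}|\xi|^{2r}\,d\xi \approx R^{2r+3}$ in three dimensions, obtaining
\begin{align*}
\int_{|\xi|\leq R}|\xi|^{2r}|\F[g](\xi)|^2\,d\xi \lesssim R^{2r+3}\norm[L^1]{g}^2.
\end{align*}
For the high-frequency contribution, since $r<s$, I would write $|\xi|^{2r}=|\xi|^{2r-2s}|\xi|^{2s}\leq R^{2r-2s}|\xi|^{2s}$, which yields
\begin{align*}
\int_{|\xi|>R}|\xi|^{2r}|\F[g](\xi)|^2\,d\xi \leq R^{2r-2s}\norm[\dot{H}^s]{g}^2.
\end{align*}

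Combining both pieces and optimizing in $R$ by choosing $R \approx \bigl(\norm[\dot{H}^s]{g}/\norm[L^1]{g}\bigr)^{2/(2s+3)}$, both terms balance and a direct computation of the resulting exponents gives the claimed interpolation inequality with $\theta=(2s-2r)/(2s+3)$; one verifies the arithmetic $2-2(2r+3)/(2s+3)=2\theta$ and $2(2r+3)/(2s+3)=2(1-\theta)$, which is consistent with the scaling in three dimensions. The degenerate cases $\norm[L^1]{g}=0$ or $\norm[\dot{H}^s]{g}=0$ are trivial and can be handled separately. There is no serious obstacle here: the argument is essentially the scaling heuristic made rigorous, and the only point requiring care is keeping track of the ambient dimension $N=3$ in the low-frequency volume term, since this is what fixes the specific exponent $\theta$.
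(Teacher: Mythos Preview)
Your argument is correct and is exactly the approach the paper indicates: Fourier transform together with a splitting into small and large frequencies, with the $L^\infty$ bound $\norm[L^\infty]{\F[g]}\leq\norm[L^1]{g}$ on the low-frequency part and the trivial weight estimate on the high-frequency part, optimized in the cutoff radius. The paper does not spell out the details, so your write-up in fact supplies more than what is given there.
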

Let us now give the first conditional estimate on the error term $ h $.
\begin{pro}\label{pro:PrepLoopInequality2}
	Under the assumptions of Theorem \ref{thm:MainTheorem} there is a constant $ C' $ such that the following holds. Assuming that \eqref{eq:LoopInequality1} and
	\begin{align}\label{eq:PropAssL1}
		\norm[L^1_m]{h}\leq \Omega'\left( \dfrac{\varepsilon}{(1+t)^2}+\dfrac{1}{\eta_t^2}\right).
	\end{align}
	hold on some interval $ t\in[0,T] $ for some constant $ \Omega' $, we can find a sufficiently small constant $ \varepsilon_0'\in (0,1) $ so that: if also $ \varepsilon\leq \varepsilon_0' $ and $ \beta_0\leq \varepsilon_0' $ we have the estimate
	\begin{align}\label{eq:PropH1Est}
			\norm[\mathcal{H}^1_{p_0}]{h_t} \leq C'(\Omega'+1)\left( \dfrac{\varepsilon}{(1+t)^2}+\dfrac{1}{\eta_t^2}\right)
	\end{align}
	for all $ t\in [0,T] $. Here, $ \varepsilon_0' $ depends on $ \Omega' $.
\end{pro}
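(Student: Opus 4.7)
The approach is a weighted $L^2$-energy estimate on equation \eqref{eq:ErrorEq}, producing a differential inequality for $\norm[\mathcal{H}^1_{p_0}]{h_t}^2$ that can then be integrated via Gronwall using the growth $\eta_t\gtrsim 1+t$ which follows from the bound \eqref{eq:LoopInequality1} together with \eqref{eq:GrowthCollisionConstant}. I would test \eqref{eq:ErrorEq} against $h\langle v\rangle^{2p_0}$ and, after differentiating in $v$, against $\partial^\alpha h\langle v\rangle^{2(p_0-2s)}$ for $|\alpha|=1$. The only coercive contribution comes from $-\eta_t\mathscr{L}h$: Lemma \ref{lem:RegEstLinearizedCollOp} gives
\begin{align*}
	-\eta_t\dualbra{\mathscr{L}h}{h}_{L^2_{p_0}}\leq -c\eta_t\norm[H^{s,*}_{p_0}]{h}^2+C\eta_t\norm[L^2]{h}^2,
\end{align*}
and the bad $L^2$ piece is absorbed by interpolating the $L^1_m$-hypothesis \eqref{eq:PropAssL1} (with $m$ as in \eqref{eq:Defintionm}) against the anisotropic norm, in the spirit of Lemma \ref{lem:Interpolation} transferred to polynomially weighted spaces. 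The weight shift $-2s$ in the definition of $\mathcal{H}^1_{p_0}$ is calibrated so that the commutator $[\partial^\alpha,\mathscr{L}]$, which loses $2s$ in the velocity weight, together with the commutator of $\partial^\alpha$ with the drift $L_tv\cdot\nabla$, is still controlled by the coercivity applied at the shifted level $p_0-2s$.

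The source $S$ in \eqref{eq:ErrorEq} is estimated via Lemma \ref{lem:FirstOrderEstimate}: the terms $-\partial_t\bar{\mu}$, $\mathrm{div}(A_tv\bar{\mu})$ and $\alpha^1_t\mathrm{div}(v(\mu+\bar{\mu}))$ are each $O(1/\eta_t)$ in $\mathcal{H}^1_{p_0}$, and the quadratic piece $\eta_tQ(\bar{\mu},\bar{\mu})$ is likewise $O(1/\eta_t)$ because one factor $1/\eta_t$ from $\bar{\mu}$ cancels the prefactor while the bilinear form is bounded through Lemma \ref{lem:L2EstimateCollOp}(i) using the Gaussian decay of $\bar{\mu}/\sqrt{\mu}$. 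Hence $\norm[\mathcal{H}^1_{p_0}]{S}\lesssim 1/\eta_t$, and Young's inequality disposes of $\dualbra{S}{h}$ at cost $\delta\eta_t\norm[\mathcal{H}^1_{p_0}]{h}^2+C/\eta_t^3$. For the linear operator $\mathscr{R}h$ the bilinear pieces $\eta_tQ(\bar{\mu},h)$ and $\eta_tQ(h,\bar{\mu})$ are uniformly $O(\norm[H^{s,*}_{p_0}]{h}\,\norm[L^2_{p_0}]{h})$ by Lemma \ref{lem:L2EstimateCollOp} combined with the fact that $\eta_t\bar{\mu}$ is bounded in every weighted Sobolev norm; the drift contributions are bounded via assumption (I); and $\alpha^2_t\mathrm{div}(v(\mu+\bar{\mu}))$ carries a factor $|\alpha^2_t|\lesssim\norm[L^1_2]{h}$, controlled by \eqref{eq:PropAssL1}.

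The nonlinear terms $\eta_tQ(h,h)$ and $\alpha^2_t\mathrm{div}(vh)$ require the smallness of $h$. By Lemma \ref{lem:L2EstimateCollOp}(ii), the dominant contribution is
\begin{align*}
	\eta_t|\dualbra{Q(h,h)}{h}_{L^2_{p_0}}|\lesssim \eta_t\norm[L^2_\ell]{h}\,\norm[H^{s,*}_{p_0}]{h}^2+\text{l.o.t.},\qquad \ell>4+3/2,
\end{align*}
and the key observation is that only $\norm[L^2_\ell]{h}$ (and not $\eta_t\norm[L^2_\ell]{h}$) has to be made small in order to absorb this into $c\eta_t\norm[H^{s,*}_{p_0}]{h}^2$. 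A standard continuity argument on the a priori bound $\norm[\mathcal{H}^1_{p_0}]{h_t}\leq 2C'(\Omega'+1)(\varepsilon+1/\eta_t^2)$ together with $\ell<p_0$ yields $\norm[L^2_\ell]{h}\leq 2C'(\Omega'+1)(\varepsilon_0'+1/\bar{\eta}(\beta_0)^2)$, which is forced to be smaller than $c/2$ by choosing $\varepsilon_0'$ small depending on $\Omega'$ and $\beta_0$ small enough that $\bar{\eta}(\beta_0)$ is large; this is precisely where the $\Omega'$-dependence of $\varepsilon_0'$ enters. Combining all estimates yields a differential inequality of the form
\begin{align*}
	\frac{d}{dt}\norm[\mathcal{H}^1_{p_0}]{h_t}^2+c\eta_t\norm[\mathcal{H}^1_{p_0}]{h_t}^2\leq C(\Omega'+1)^2\left(\frac{\varepsilon^2}{(1+t)^4}+\frac{1}{\eta_t^4}\right),
\end{align*}
which is integrated via the factor $\exp(c\int_0^t\eta_s\,ds)$: the initial-data contribution decays faster than any polynomial, while integration by parts against $\eta_s$ (using $|\eta_s'|\lesssim\eta_s$ as in \eqref{eq:ProofTimeIntegralBound}) converts the $1/\eta_s^4$ source into $1/\eta_t^4$, and the $\varepsilon^2/(1+s)^4$ piece is handled analogously using $\eta_s\geq c_0(1+s)$.

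\textbf{Main obstacle.} The delicate step is the derivative estimate: the commutator $[\partial^\alpha,\mathscr{L}]$ loses weight $2s$ and has to be matched against Lemma \ref{lem:L2EstimateCollOp} at the shifted level $p_0-2s$, while one must simultaneously verify that the differentiated nonlinearities $\partial^\alpha(\eta_tQ(h,h))$ and $\partial^\alpha(\alpha^2_t\mathrm{div}(vh))$ admit an analogous coercive absorption. The interplay of the large collision prefactor $\eta_t$ with the smallness of $h$ (which itself depends on $\eta_t$ through \eqref{eq:PropAssL1}) has to be bookkept with care, and the $L^1_m$-hypothesis is indispensable both for controlling $|\alpha^2_t|$ and for producing, through interpolation, the decay that dresses the $\norm[L^2]{h}^2$ coming out of the coercivity.
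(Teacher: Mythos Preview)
Your plan is essentially the paper's proof: weighted $L^2$ energy estimates at levels $p_0$ and $p_0-2s$, coercivity from Lemma~\ref{lem:RegEstLinearizedCollOp}, interpolation of the bad $\norm[L^2]{h}^2$ against the $L^1_m$-hypothesis via Lemma~\ref{lem:Interpolation}, bilinear control through Lemma~\ref{lem:L2EstimateCollOp}, Gronwall with integration by parts against $\eta_s$, and a continuation argument to close.

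Two points deserve sharpening. First, the cross term you call the commutator $[\partial^\alpha,\mathscr{L}]h=Q(\partial^\alpha\mu,h)+Q(h,\partial^\alpha\mu)$ carries the full prefactor $\eta_t$ with no smallness in $h$; after Young's inequality it contributes a piece $C\eta_t\norm[H^{s,*}_{p_0}]{h}^2$ to the \emph{derivative} estimate, which cannot be absorbed by coercivity at the shifted level $p_0-2s$ alone. The paper handles this by working with the equivalent norm $\norm[L^2_{p_0}]{h}^2+\kappa\norm[L^2_{p_0-2s}]{\nabla h}^2$ for a small fixed $\kappa$, so that this cross term becomes $\kappa C\eta_t\norm[H^{s,*}_{p_0}]{h}^2$ and is then absorbed by the zero-order coercivity $-c_\delta\eta_t\norm[H^{s,*}_{p_0}]{h}^2$. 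Your phrasing ``controlled by the coercivity applied at the shifted level $p_0-2s$'' hides this coupling.

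Second, your final differential inequality omits a factor $\eta_t$: after interpolation the remaining forcing is $C\eta_t\norm[L^1]{h}^2$, which under \eqref{eq:PropAssL1} produces $C(\Omega')^2\big(\eta_t\varepsilon^2(1+t)^{-4}+\eta_t^{-3}\big)$ rather than $\varepsilon^2(1+t)^{-4}+\eta_t^{-4}$. This is harmless --- the $\eta_s$ cancels against $E_s'=c\eta_s$ in the Gronwall integral exactly as in the paper's Step~2(iii) --- but you should track it.
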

\begin{proof}
	We split the proof into several steps. We first derive the necessary a priori estimates. Let us write for brevity $ p $ instead of $ p_0 $. Furthermore, it is convenient to use the following norm, which is equivalent to $ \norm[\mathcal{H}^1_{p}]{\cdot} $,
	\begin{align*}
		\tnorm[\mathcal{H}^1_{p}]{h}^2= \norm[L^2_{p}]{h}^2 + \kappa \sum_{|\alpha|= 1}\norm[L^2_{p-2s}]{\partial^\alpha h}^2.
	\end{align*}
	Here, $ \kappa\in(0,1) $ will be chosen small enough, but fixed.
	
	\textit{Step 1:} Using \eqref{eq:ErrorEq} we have
	\begin{align*}
		\dfrac{1}{2}\dfrac{d}{dt}\norm[L^2_{p}]{h}^2 = \dualbra{S}{h}_{L^2_p}+\dualbra{\mathscr{R}h}{h}_{L^2_p} - \alpha_t^2\dualbra{\div(vh)}{h}_{L^2_p} + \eta_t\dualbra{Q(h,h)}{h}_{L^2_p}- \eta_t \dualbra{\Lin h}{h}_{L^2_p}.
	\end{align*}
	We estimate term by term.
	\begin{enumerate}[(i)]
		\item With Lemma \ref{lem:FirstOrderEstimate} and Lemma \ref{lem:L2EstimateCollOp} we obtain
		\begin{align*}
			\dualbra{S_t}{h_t}_{L^2_p} &= \dualbra{-\partial_t \bar{\mu} +\div(A_tv \, \bar{\mu}) - \alpha_t^1\, \div \left( v\, (\mu+\bar{\mu}_t)\right) +\eta_t\, Q(\bar{\mu}_t,\bar{\mu}_t)}{h_t}_{L^2_p} 
			\\
			&\leq C_1\left( \dfrac{1}{\eta_t}+\dfrac{1}{\eta_t^2}\right)  \norm[L^2]{h} \leq \dfrac{C_1}{\eta_t}\norm[L^2]{h_t}.
		\end{align*}
		Recall that $ \norm{A_t} $ is uniformly bounded in time and $ |\alpha_1|\lesssim 1/\eta_t $. Furthermore, we used $ \eta_t\geq c_0\bar{\eta}(\beta_0)\geq 1 $ for $ \beta_0\leq \varepsilon_0' $ small, see \eqref{eq:GrowthCollisionConstant}. This will be used repeatedly in the sequel.
		
		\item By definition of $ \mathscr{R}h $ in \eqref{eq:ErrorEq} we have
		\begin{align*}
			\dualbra{\mathscr{R}h}{h}_{L^2_p} = \dualbra{-\alpha_t^2 \, \div \left( v \, (\mu+\bar{\mu}) \right) + \div(Av \ h)-\alpha^1_t \, \div(v \, h) + \eta_t\, \left( Q(\bar{\mu},h)+Q(h,\bar{\mu}) \right)}{h}_{L^2_p}.
		\end{align*}
		The first term can be estimated using Lemma \ref{lem:FirstOrderEstimate} and the second respectively third via partial integration. This yields and upper bound of the form $ C_2\norm[L^2_p]{h}^2 $. Note that we used $ |\alpha_t^2|\lesssim \norm[L^2_p]{h} $. Using Lemma~\ref{lem:FirstOrderEstimate} and Lemma \ref{lem:L2EstimateCollOp} (i) with $ s_1=2s $, $ s_2=0 $, $ N_1=\gamma+2s $, $ N_2=0 $ we get
		\begin{align*}
			\eta_t\dualbra{Q(h,\bar{\mu})}{h}_{L^2_p}\lesssim \norm[L^2_{p+\gamma/2}]{h}^2.
		\end{align*}
		Note that by $ s\in(0,1/2) $ and our choice of $ p=p_0 > 4 +4s +3/2 $ the Lemma applies. Furthermore, we can choose $ \ell $ with $ p \geq \ell > \gamma +1+3/2 $. For the last term we use Lemma \ref{lem:FirstOrderEstimate} and Lemma \ref{lem:L2EstimateCollOp} (ii) leading to
		\begin{align*}
			\eta_t\dualbra{Q(\bar{\mu},h)}{h}_{L^2_p}\lesssim \norm[L^2_p]{h}\norm[H^{s,*}_p]{h}.
		\end{align*}
		Estimating $ \alpha_t^2\dualbra{\div(vh)}{h}_{L^2_p} $ via partial integration gives
		\begin{align*}
			\dualbra{\mathscr{R}h}{h}_{L^2_p}-\alpha_t^2\dualbra{\div(vh)}{h}_{L^2_p} \leq C_2\norm[H^{s,*}_p]{h}^2.
		\end{align*}
		We also used that $ |\alpha_t^2|\leq |\alpha_t-\alpha_t^0-\alpha_t^1|\leq C_2 $. 
		
		\item For the collision operator we apply Lemma \ref{lem:L2EstimateCollOp} (ii) to get
		\begin{align*}
			\dualbra{Q(h,h)}{h}_{L^2_p} \leq C_3\norm[L^2_p]{h}\norm[H^{s,*}_p]{h}^2,
		\end{align*}
		where we chose $ \ell $ such that $ p\geq \ell >4+3/2 $.
		
		\item Finally, using Lemma \ref{lem:RegEstLinearizedCollOp} we obtain
		\begin{align*}
			-\dualbra{\Lin h}{h}_{L^2_p} \leq c_\delta \norm[H^{s,*}_p]{h}^2 + C_\delta \norm[L^2]{h}^2.
		\end{align*}
		We now apply Lemma \ref{lem:Interpolation} and Young's inequality for the last term to get
		\begin{align*}
			-\dualbra{\Lin h}{h}_{L^2_p} \leq \dfrac{c_\delta}{2}\norm[H^{s,*}_p]{h}^2 + C_\delta'\norm[L^1]{h}^2.
		\end{align*}
	\end{enumerate}
	We summarize the preceding estimates yielding
	\begin{align}\label{eq:ProofL2Est}
		\dfrac{1}{2}\dfrac{d}{dt}\norm[L^2_{p}]{h}^2 \leq -\left( \dfrac{\eta_t\, c_\delta}{2}-C_2-\eta_t \, C_3\norm[L^2_p]{h}\right) \norm[H^{s,*}_p]{h}^2 + \dfrac{C_1}{\eta_t}\norm[L^2]{h}+\eta_tC_\delta'\norm[L^1]{h}^2.
	\end{align}
	Let us now turn to the estimates for $ g^i:= \partial_{i}h $. We abbreviate $ q:=p-2s $. Differentiating equation \eqref{eq:ErrorEq} yields (we denote by $ A^i_t $ the $ i $-th column of the matrix $ A_t $)
	\begin{align*}
		\partial_t g^i = & \partial_iS - \alpha^2_t\partial_{i}\left[ \div(v(\mu+\bar{\mu})) \right] + A^i_t \cdot \nabla h-\alpha_t^1\, g^i + \div\left( (A_t-\alpha^1_t)v\, g^i \right) 
		\\
		& + \eta_t\left( Q(\partial_{i}\bar{\mu},h)+Q(h,\partial_{i}\bar{\mu}) \right) +\eta_t  \left( Q(\bar{\mu},g^i)+Q(g^i,\bar{\mu}) \right)
		\\
		& -\alpha^2_tg^i -\alpha^2_t\div(v\, g^i) + \eta_t\left( Q(h,g^i)+Q(g^i,h) \right) + \eta_t\left( Q(\partial_{i}\mu,h)+Q(h,\partial_{i}\mu) \right) - \eta_t\Lin g^i.
	\end{align*}
	Here we used the well-known identity $ \partial_{i}Q(u,v)=Q(\partial_{i}u,v)+Q(u,\partial_{i}v) $ for functions $ u,\, v $. We now estimate term by term in
	\begin{align*}
		\dfrac{1}{2}\dfrac{d}{dt}\norm[L^2_q]{g^i}^2 = \dualbra{\partial_tg^i}{g^i}_{L^2_q}.
	\end{align*}
	\begin{enumerate}[(i)]
		\item Using Lemma \ref{lem:FirstOrderEstimate} and either $ |\alpha_t^2|\leq \norm[L^2_p]{h} $ or $ |\alpha_t^2|\leq C_4 $ we obtain 
			\begin{align*}
				\dualbra{\partial_iS - \alpha^2_t\partial_{i}\left[ \div(v(\mu+\bar{\mu})) \right] + A^i_t \cdot \nabla h-\alpha_t^1\, g^i + \div\left( (A_t-\alpha^1_t)v\, g^i \right)}{g^i}_{L^2_q} 
				\\
				\leq \dfrac{C_4}{\eta_t}\norm[L^2]{g^i} + C_4\left( \norm[L^2_p]{h}\norm[L^2_q]{g^i}+\norm[L^2_q]{\nabla h}\norm[L^2_q]{g^i}+\norm[L^2_q]{g^i}^2 \right).
			\end{align*}
		\item We apply Lemma \ref{lem:L2EstimateCollOp} (i) with $ s_1=s_2=s $, $ N_1=\gamma/2+2s $, $ N_2=\gamma/2 $ to obtain
		\begin{align*}
			\eta_t &\dualbra{Q(\partial_{i}\bar{\mu}+\partial_{i}\mu,h)+Q(h,\partial_{i}\bar{\mu}+\partial_{i}\mu) }{g^i}_{L^2_q} 
			\\
			&\leq \eta_tC_5\left( \norm[H^s_{q+\gamma/2+2s}]{h}\norm[H^s_{q+\gamma/2}]{g^i} + \norm[L^2_{q+\gamma/2}]{h}\norm[L^2_{q+\gamma/2}]{g^i}\right)
			\\
			&\leq \eta_tC_5\left( \norm[H^s_{p+\gamma/2}]{h}\norm[H^s_{q+\gamma/2}]{g^i} + \norm[L^2_{p+\gamma/2}]{h}\norm[L^2_{q+\gamma/2}]{g^i}\right).
		\end{align*}
		In the last inequality we used $ q+2s=p $.
		
		\item With Lemma \ref{lem:L2EstimateCollOp} we can estimate
		\begin{align*}
			\eta_t  \dualbra{Q(\bar{\mu},g^i)+Q(g^i,\bar{\mu})}{g^i}_{L^2_q}\leq C_6 \norm[H^{s,*}_{q}]{g^i}^2.
		\end{align*}
		
		\item We also have with $ |\alpha^2_t|\leq C_7 $
		\begin{align*}
			\dualbra{-\alpha^2_tg^i -\alpha^2_t\div(v\, g^i)}{g^i}_{L^2_q}\leq C_7\norm[L^2_q]{g^i}^2.
		\end{align*}
		
		\item For the mixed terms $ Q(h,g^i)+Q(g^i,h) $ we use Lemma \ref{lem:L2EstimateCollOp} (ii) to get
		\begin{align*}
			\dualbra{Q(h,g^i)}{g^i}_{L^2_q} \lesssim \norm[L^2_p]{h}\norm[H^{s,*}_q]{g^i}^2+\norm[L^2_q]{g^i}\norm[L^2_{q+\gamma/2}]{h}\norm[L^2_{q+\gamma/2}]{g^i}.
		\end{align*}
		Here, we chose $ \ell $ such that $ q\geq \ell > 4+3/2 $. Applying Lemma \ref{lem:L2EstimateCollOp} (i) with $ s_1=s_2=s $, $ N_1=\gamma/2+2s $, $ N_2=\gamma/2 $ yields
		\begin{align*}
			\dualbra{Q(g^i,h)}{g^i}_{L^2_q} \lesssim \norm[L^2_q]{g^i}\norm[H^s_{q+\gamma+2s}]{h}\norm[H^s_{q+\gamma/2}]{g^i} + \norm[L^2_p]{h}\norm[L^2_{q+\gamma/2}]{g^i}^2.
		\end{align*}
		In total we get
		\begin{align*}
			\dualbra{Q(h,g^i)+Q(g^i,h)}{g^i}_{L^2_q} \leq C_8\left( \norm[L^2_p]{h}\norm[H^{s,*}_q]{g^i}^2+\norm[L^2_q]{g^i}\norm[H^{s,*}_p]{h}\norm[H^{s,*}_q]{g^i} \right).
		\end{align*}
		
		\item Applying Lemma \ref{lem:RegEstLinearizedCollOp} gives
		\begin{align*}
			-\dualbra{\Lin g^i}{g^i} \leq -c_\delta \norm[H^{s,*}_q]{g^i}^2+C_\delta \norm[L^2]{g^i}^2.
		\end{align*}
		For the last term we use Lemma \ref{lem:Interpolation} to get
		\begin{align*}
			\norm[L^2]{g^i} \leq \norm[\dot{H}^1]{h} \lesssim \norm[L^1]{h}^{\theta}\norm[\dot{H}^s]{\nabla h}^{1-\theta}
		\end{align*}
		with $ \theta\in(0,1) $ and apply Young's inequality yielding
		\begin{align*}
			-\dualbra{\Lin g^i}{g^i} \leq -c_\delta \norm[H^{s,*}_q]{g^i}^2 + \dfrac{c_\delta}{2}\norm[H^{s,*}_q]{\nabla h}^2+C_\delta''\norm[L^1]{h}^2.
		\end{align*}
	\end{enumerate}
	We now sum the estimates for $ i=1,2,3 $ to get
	\begin{align*}
		\dfrac{1}{2}\dfrac{d}{dt}\norm[L^2_q]{\nabla h}^2 \leq& (C_4+C_7)\left( \norm[L^2_q]{\nabla h}^2 + \norm[L^2_p]{h}\norm[L^2_q]{\nabla h} \right)  
		\\&+ \eta_tC_5 \left( \norm[H^s_{p+\gamma/2}]{h}\norm[H^s_{q+\gamma/2}]{\nabla h} + \norm[L^2_{p+\gamma/2}]{h}\norm[L^2_{q+\gamma/2}]{\nabla h} \right) 
		\\
		&+ \eta_t C_8 \norm[L^2_q]{\nabla h}\norm[H^{s,*}_p]{h}\norm[H^{s,*}_q]{\nabla h}
		\\
		&-\left( \dfrac{\eta_t\, c_\delta}{2}-C_6-\eta_tC_8\norm[L^2_p]{h} \right)\norm[H^{s,*}_q]{\nabla h}^2 + \dfrac{C_4}{\eta_t}\norm[L^2]{\nabla h}+ \eta_tC_\delta''\norm[L^1]{h}^2.
	\end{align*}
	For the first line we use rough estimates in $ H^{s,*} $ and for the third line we use Young's inequality. For the second line we use Young's inequality to absorb both norms with $ \nabla h $ in $ c_\delta $. We thus obtain
	\begin{align}\label{eq:ProofH1Est}
		\begin{split}
			\dfrac{1}{2}\dfrac{d}{dt}\norm[L^2_q]{\nabla h}^2 \leq& \left(C_4+C_7+\eta_tC_5'+\eta_tC_8\norm[L^2_q]{\nabla h}\right)\norm[H^{s,*}_p]{h}^2
			\\
			&-\left( \dfrac{\eta_t\, c_\delta}{4}-C_4-C_6-C_7-\eta_tC_8\norm[L^2_p]{h}-\eta_tC_8\norm[L^2_q]{\nabla h} \right)\norm[H^{s,*}_q]{\nabla h}^2 
			\\
			&+\dfrac{C_4}{\eta_t}\norm[L^2]{\nabla h}+ \eta_tC_\delta''\norm[L^1]{h}^2.
		\end{split}
	\end{align}
	Next, due to \eqref{eq:LoopInequality1}, i.e. $ \eta_t\geq c_0\bar{\eta}(\beta_0) $ by \eqref{eq:GrowthCollisionConstant}, and $ \bar{\eta}(\beta_0)\to \infty $ as $ \beta_0\to 0 $, we can find $ \varepsilon_0' $ such that the constants $ C_2 $ resp. $ C_4,\, C_6,\, C_7 $ in \eqref{eq:ProofL2Est} resp. \eqref{eq:ProofH1Est} can be absorbed in the term involving $ c_\delta $. We then obtain
	\begin{align*}
		\dfrac{1}{2}\dfrac{d}{dt}\tnorm[\mathcal{H}^1_p]{h}^2 \leq& -\left( \dfrac{\eta_t\, c_\delta}{4} -\eta_t \, C_3\norm[L^2_p]{h}-\kappa\eta_tC_5'-\eta_tC_8\, \kappa\norm[L^2_q]{\nabla h}\right) \norm[H^{s,*}_p]{h}^2 
		\\
		&-\left( \dfrac{\eta_t\, c_\delta}{8}-\eta_tC_8\norm[L^2_p]{h} -\eta_tC_8\norm[L^2_q]{\nabla h} \right)\, \kappa\norm[H^{s,*}_q]{\nabla h}^2
		\\
		&+\dfrac{C_1+C_4}{\eta_t}\tnorm[\mathcal{H}^1_p]{h}+ \eta_t\left( C_\delta'+ \kappa C_\delta'' \right) \norm[L^1]{h}^2.
	\end{align*}
	Now, let us choose $ \kappa\in(0,1) $ small enough such that $ C_5' $ can be absorbed into the term involving $ c_\delta $, yielding (recall that $ q=p-2s $)
	\begin{align}\label{eq:ProofH1pEst}
		\begin{split}
			\dfrac{1}{2}\dfrac{d}{dt}\tnorm[\mathcal{H}^1_p]{h}^2 &\leq -\eta_t \left( \dfrac{c_\delta}{8}-C_9\tnorm[\mathcal{H}^1_p]{h} \right) \left( \norm[H^{s,*}_p]{h}^2 +\kappa\norm[H^{s,*}_{p-2s}]{\nabla h}^2 \right) 
			\\
			&\dfrac{C_9}{\eta_t}\tnorm[\mathcal{H}^1_p]{h}+ \eta_tC_\delta''' \norm[L^1]{h}^2.
		\end{split}
	\end{align}
	The constants depend on $ \kappa $, which is a fixed numerical constant.
	
	\textit{Step 2:} Let us derive an a priori bound on $ \tnorm[\mathcal{H}^1_p]{h_t} $, which is used in the next step for a continuation argument. For this let us assume that
	\begin{align}\label{eq:ProofAssumpAPrioriEst}
		\tnorm[\mathcal{H}^1_p]{h_t} \leq \Omega\left( \dfrac{\varepsilon}{(1+t)^2}+\dfrac{1}{\eta_t^2} \right) \leq \dfrac{c_\delta}{16C_9}
	\end{align}
	holds on $ [0,T] $ for some $ \Omega>0 $. As a consequence of \eqref{eq:ProofH1pEst} we have
	\begin{align*}
		\dfrac{1}{2}\dfrac{d}{dt}\tnorm[\mathcal{H}^1_p]{h}^2 \leq -\eta_t c_\delta' \tnorm[\mathcal{H}^1_p]{h}^2+ \dfrac{C_9}{\eta_t}\norm[\mathcal{H}^1_p]{h}+ \eta_tC_\delta'''\norm[L^1]{h}^2.
	\end{align*}
	We apply Gronwall's inequality, \eqref{eq:PropAssL1} and \eqref{eq:ProofAssumpAPrioriEst} to get
	\begin{align}\label{eq:ProofAPrioriEstPrep}
		\begin{split}
			\tnorm[\mathcal{H}^1_p]{h_t}^2 &\leq e^{-E_t}\varepsilon^2 + C_9\Omega\,\int_0^te^{-E_t+E_s}\left( \dfrac{\varepsilon}{\eta_s(1+s)^2}+\dfrac{1}{\eta_s^3} \right) \,ds
			\\
			&+ 2C_\delta'''\Omega'\, \int_0^te^{-E_t+E_s}\left( \dfrac{\varepsilon^2\, \eta_s}{(1+s)^4}+\dfrac{1}{\eta_s^3}\right) \,ds,
		\end{split}
	\end{align}
	where $ E_t:=c_\delta'\int_0^t\eta_s\, ds $. By \eqref{eq:LoopInequality1} and hence \eqref{eq:GrowthCollisionConstant} we have
	\begin{align*}
		e^{-E_t} \leq \dfrac{C_{10}}{(1+t)^4}.
	\end{align*}
	Note that this does not depend on $ \beta_0 $, since we can assume $ \bar{\eta}(\beta_0)\geq1 $, due to the smallness $ \beta_0\leq \varepsilon_0' $. Let us now estimate the time integrals. 
	
	\begin{enumerate}[(i)]
		\item We use partial integration to get
		\begin{align*}
			\int_0^te^{-E_t+E_s}\dfrac{1}{\eta_s^3}\, ds \leq \dfrac{1}{c_\delta'\eta_t^4}+\dfrac{4}{c_\delta'}\int_0^te^{-E_t+E_s}\dfrac{\eta_s'}{\eta_s^5}\, ds.
		\end{align*}
		We now use that $ |\eta_t'/\eta_t|\leq C $ as in \eqref{eq:ProofTimeIntegralBound} and $ \eta_t\geq c_0\bar{\eta} $, cf. \eqref{eq:GrowthCollisionConstant}, to get
		\begin{align*}
			\int_0^te^{-E_t+E_s}\dfrac{1}{\eta_s^3}\, ds \leq \dfrac{1}{c_\delta'\,\eta_t^4}+\dfrac{1}{c_0c_\delta'\,\bar{\eta}}\int_0^te^{-E_t+E_s}\dfrac{1}{\eta_s^3}\, ds.
		\end{align*}
		If $ \beta_0\leq \varepsilon_0' $ is small enough, i.e. $ \bar{\eta}(\beta_0) $ large enough, then we can absorb the last term into the left-hand side yielding
		\begin{align*}
			\int_0^te^{-E_t+E_s}\dfrac{1}{\eta_s^3}\, ds \leq \dfrac{C_{11}}{\eta_t^4}.
		\end{align*}
		Note that in this argument the smallness of $ \varepsilon_0' $ depends only on numerical constants.
		
		\item We again use partial integration to get
		\begin{align*}
			\int_0^te^{-E_t+E_s}\dfrac{1}{\eta_s(1+s)^2}\, ds \lesssim \dfrac{1}{\eta_t^2(1+t)^2}+\int_0^te^{-E_t+E_s}\left( \dfrac{1}{\eta_s^2(1+s)^2}+\dfrac{1}{\eta_s^2(1+s)^3}\right) \, ds,
		\end{align*}
		where we used again $ |\eta_t'/\eta_t|\leq C $. As before we can absorb the last term into the left-hand side for $ \bar{\eta}(\beta_0) $ large enough, $ \beta_0\leq \varepsilon_0' $. Hence, we have
		\begin{align*}
			\int_0^te^{-E_t+E_s}\dfrac{1}{\eta_s(1+s)^2}\, ds \leq \dfrac{C_{12}}{\eta_t^2(1+t)^2}.
		\end{align*}
		We then use in the corresponding term in \eqref{eq:ProofAPrioriEstPrep}
		\begin{align*}
			\dfrac{\varepsilon}{\eta_t^2(1+t)^2}\leq \dfrac{1}{\eta_t^4}+\dfrac{\varepsilon^2}{(1+t)^4}.
		\end{align*}
		
		\item Finally, have
		\begin{align*}
			\int_0^te^{-E_t+E_s}\eta_s\dfrac{1}{(1+s)^4}\,ds \lesssim \dfrac{1}{(1+t)^4} + \int_0^te^{-E_t+E_s}\dfrac{1}{(1+s)^5}\,ds.
		\end{align*}
		We estimate the last integral by considering the case $ t\leq 1 $ and $ t\geq1 $. In the second case, we split $ [0,t] $ into $ [0,t/2] $ and $ [t/2,t] $. This yields with \eqref{eq:LoopInequality1} and hence \eqref{eq:GrowthCollisionConstant}
		\begin{align*}
			\int_0^te^{-E_t+E_s}\dfrac{1}{(1+s)^5}\, ds &\leq  \ind_{\set{t\leq 1}}\dfrac{C}{(1+t)^4} + \ind_{\set{t\geq 1}}\left( e^{-c_\delta'\int_{t/2}^t \eta_r\, dr} + \dfrac{C}{(1+t)^4} \right)
			\\
			&\leq \dfrac{C_{13}}{(1+t)^4}.
		\end{align*}
		Note that $ C_{13} $ does not depend on $ \beta_0 $, since $ \bar{\eta}(\beta_0)\geq1 $.
	\end{enumerate}
	Hence, we obtain the estimate
	\begin{align*}
		\tnorm[\mathcal{H}^1_p]{h_t}^2 \leq  C_{14}\left(\Omega'+\sqrt{\Omega}+1\right)  \left( \dfrac{\varepsilon}{(1+t)^2}+\dfrac{1}{\eta_t^2}\right) .
	\end{align*}
	for some constant $ C_{14}\geq 1 $. If we would have $ \Omega=16\,C_{14}^2(\Omega'+1) $ then we would obtain
	\begin{align}\label{eq:ProofAPrioriEst2}
		\tnorm[\mathcal{H}^1_p]{h_t}^2 \leq  \dfrac{\Omega}{2}\left( \dfrac{\varepsilon}{(1+t)^2}+\dfrac{1}{\eta_t^2}\right) .
	\end{align}
	Recall that for this to hold we needed $ \tnorm[\mathcal{H}^1_p]{h_t} \leq c_\delta/16C_9 $ and $ \beta_0\leq \varepsilon_0' $.

	\textit{Step 3: Continuation argument.} We now use a continuation argument to show that the a priori estimates in Step 2 can be justified rigorously, i.e. it holds
	\begin{align}\label{eq:ProofContArgEst}
		\tnorm[\mathcal{H}^1_{p}]{h_t} \leq \Omega\left( \dfrac{\varepsilon}{(1+t)^2}+\dfrac{1}{\eta_t^2}\right) .
	\end{align}
	We define the constant $ \Omega:=16\,C_{14}^2(\Omega'+1)\geq2 $ with the previous notation. First of all, the estimate \eqref{eq:ProofContArgEst} is true on some small interval $ [0,t_0] $, $ t_0>0 $, due to $ \tnorm[\mathcal{H}^1_{p}]{h_0}\leq \varepsilon $, $ \Omega\geq2 $ and the continuity of the norm. Let us now assume it holds on some interval $ [0,t_1] $. Without loss of generality this interval is closed by the continuity of the norm. If we further reduce $ \varepsilon_0'=\varepsilon_0'(\Omega')>0 $ such that 
	\begin{align*}
		\Omega\left( \dfrac{\varepsilon}{(1+t)^2}+\dfrac{1}{\eta_t^2}\right) \leq \dfrac{c_\delta}{16C_9}
	\end{align*}
	for $ \varepsilon\leq \varepsilon_0' $ and $ \beta_0\leq \varepsilon_0' $, the estimate \eqref{eq:ProofAPrioriEst2} is also true on $ [0,t_1] $ by Step 2. By continuity the estimate \eqref{eq:ProofContArgEst} is then also valid on some larger interval $ [0,t_2] $, $ t_2>t_1 $. This shows that the set of times for which \eqref{eq:ProofContArgEst} is valid, is both open and closed. Hence, \eqref{eq:ProofContArgEst} holds for all $ t\geq0 $. Note that the bound \eqref{eq:ProofContArgEst} implies \eqref{eq:PropH1Est} up to the numerical factor $ \kappa\in(0,1) $. This concludes the proof with $ C'=16\,C_{14}^2 $.
\end{proof}

\paragraph{Estimates in $ L^1 $-framework.} Based on the estimates in Proposition \ref{pro:PrepLoopInequality2} we prove that \eqref{eq:LoopInequality1} implies \eqref{eq:LoopInequality2}. For this we need the following result from \cite{Tristani2014ExponentialConvergenceEquilHomogBoltz} for the linearized collision operator.
\begin{lem}\label{lem:LinearDecay} 
	For any $ m>2 $ the semigroup generated by $ -\Lin $, $ e^{-t \Lin}:L^1_m\to L^1_m $, has the following property: there is $ C_m, \lambda_m>0 $ such that for all $ t\geq0 $
	\begin{align*}
		\norm[L^1(\left\langle  v \right\rangle^m\, dv)]{e^{-t \Lin} g-\Pi_0g}\leq C_me^{-\lambda_mt}\norm[L^1_m]{g}.
	\end{align*}
	Here, $ \Pi_0 $ denotes the projection onto $ \ker\Lin $ in $ L^1_m $.
\end{lem}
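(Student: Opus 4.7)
The plan is to apply the enlargement-of-functional-spaces framework of Gualdani--Mischler--Mouhot in order to transfer the known spectral gap of $\Lin$ from the Gaussian-weighted Hilbert space $\mathcal{E}:=L^2(\mu^{-1/2})$ to the polynomially weighted Lebesgue space $\mathcal{F}:=L^1_m$, $m>2$. The starting point is the classical result cited earlier in the excerpt (Baranger--Mouhot, Mouhot--Strain, Gressman--Strain and others): on $\mathcal{E}$ the operator $\Lin$ is non-negative self-adjoint with a spectral gap $\lambda^\ast>0$ above the five-dimensional kernel $\ker\Lin$, so that $\|e^{-t\Lin}g - \Pi_0 g\|_{\mathcal{E}} \leq C e^{-\lambda^\ast t}\|g\|_{\mathcal{E}}$ for every $g \in \mathcal{E}$.

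The key structural step is a splitting $\Lin=\mathcal{B}+\mathcal{A}$ in which $\mathcal{B}$ is dissipative on $L^1_m$ with rate strictly greater than $\lambda^\ast$, while $\mathcal{A}$ maps $L^1_m$ boundedly into $\mathcal{E}$ (or into some intermediate Sobolev-type space on which the spectral gap already holds). For non-cutoff hard potentials the natural construction truncates the angular cross-section $b$ at a small angle $\delta>0$ and further cuts the gain contribution off in the velocity variable; the dominant surviving part of $\mathcal{B}$ is then a multiplication operator by a collision frequency $\nu(v)\sim\langle v\rangle^\gamma$, which is super-dissipative in $L^1_m$ thanks to $\gamma>0$ and $m>2$. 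The gain-type piece entering $\mathcal{A}$ has compactness-like and regularization properties through a Grad-type decomposition supplemented by Alexandre-type commutator estimates in the spirit of Lemma \ref{lem:CommutatorErstimate}.

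With such a splitting in hand I would iterate the Duhamel identity
$$e^{-t\Lin} = e^{-t\mathcal{B}} + \int_0^t e^{-(t-s)\Lin}\,\mathcal{A}\, e^{-s\mathcal{B}}\,ds$$
sufficiently many times so that the final convolution factor takes values in the small space $\mathcal{E}$, where the spectral gap can be applied directly. Each factor $e^{-\cdot\,\mathcal{B}}$ contributes exponential decay in $L^1_m$ at rate strictly larger than $\lambda^\ast$, and on the last factor the spectral gap on $\mathcal{E}$ (after subtracting the projection $\Pi_0$, which is intrinsic to $\Lin$ and coincides on $\mathcal{E}$ and on $L^1_m$) together with the embedding $\mathcal{E}\hookrightarrow L^1_m$ closes the estimate. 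Collecting terms produces the required inequality with explicit constants $C_m>0$ and $\lambda_m>0$.

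The main technical obstacle will be verifying the $\mathcal{B}$-dissipativity and the $\mathcal{A}$-regularization under the non-cutoff singularity, since Grad's splitting is not available as a black box. The angular truncation at scale $\delta$ isolates the grazing singular piece, which must be absorbed into $\mathcal{B}$ by exploiting the anisotropic coercivity of Lemma \ref{lem:RegEstLinearizedCollOp}; the remaining $\delta$-regular operator then behaves essentially like a cutoff operator and can be placed in $\mathcal{A}$ with standard moment bookkeeping. Once these building blocks are in place the abstract enlargement theorem applies and the argument reduces to the one carried out in Tristani's original work.
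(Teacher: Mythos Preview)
The paper does not give its own proof of this lemma; it simply quotes the result from Tristani's work \cite{Tristani2014ExponentialConvergenceEquilHomogBoltz} (and, behind that, the factorization framework of Gualdani--Mischler--Mouhot \cite{Mouhot2017FactorizationNonSymOper}). Your outline is precisely the strategy used there: the splitting $\Lin=\mathscr{A}_\varepsilon+\mathscr{B}_\varepsilon$ with $\mathscr{B}_\varepsilon$ dissipative on $L^1_m$ and $\mathscr{A}_\varepsilon$ regularizing from $L^1_m$ into the small Gaussian-weighted space, followed by iterated Duhamel to import the spectral gap. The paper even spells out this very decomposition later in Section~\ref{sec:Cutoff} (see the proof of Lemma~\ref{lem:CutoffL1Estimate}) when discussing the cutoff variant. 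So your proposal is correct and matches the cited proof; there is nothing further to compare.
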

The next lemma gives an estimate of the collision operator in $ L^1 $. It can be proved in a similar way as \cite[Proposition 3.1]{Tristani2014ExponentialConvergenceEquilHomogBoltz}.
\begin{lem}\label{lem:L1EstimateCollOp}
	For any two functions $ f,\, g $ we have
	\begin{align*}
		\norm[L^1_m]{Q(f,g)}\leq C\left( \norm[L^1_{m+\gamma}]{f}\norm[L^1_{m+\gamma}]{g}+\norm[L^1_{\gamma+1}]{f}\norm[W^{1,1}_{m+\gamma+1}]{g} \right).
	\end{align*}
\end{lem}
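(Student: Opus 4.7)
\textbf{Proof plan for Lemma \ref{lem:L1EstimateCollOp}.} The strategy is to decompose the collision operator so that one piece is handled by the smoothness of $g$ (yielding the $W^{1,1}$-term) while the other is handled by a cancellation in the angular singularity (yielding the purely $L^1$-term). Concretely, I would start by writing
\begin{align*}
	Q(f,g)(v) = \int_{\R^3}\int_{S^2} B\, f'_*\, (g'-g)\, d\sigma dv_* + g(v)\int_{\R^3}\int_{S^2} B\, (f'_*-f_*)\, d\sigma dv_* =: I_1(v)+I_2(v).
\end{align*}

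For $I_1$, I would use a first-order Taylor expansion $g(v')-g(v) = \int_0^1 \nabla g(v_\tau)\cdot (v'-v)\, d\tau$ with $v_\tau := v+\tau(v'-v)$, combined with the identity $|v'-v|=|v-v_*|\sin(\theta/2)$. This yields
\begin{align*}
	|I_1(v)| \lesssim \int_{\R^3}\int_{S^2} |v-v_*|^{\gamma+1}\, b(n\cdot\sigma)\sin(\theta/2)\, |f'_*|\int_0^1 |\nabla g|(v_\tau)\, d\tau\, d\sigma dv_*,
\end{align*}
where the angular integral $\int b(n\cdot\sigma)\sin(\theta/2)\, d\sigma$ converges, because $b(\cos\theta)\sin\theta \cdot \sin(\theta/2) \sim \theta^{-2s}$ near $\theta=0$ and $s<1/2$. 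After multiplying by $\langle v\rangle^m$, integrating in $v$, and changing variables $v\mapsto v_\tau$ (which has bounded Jacobian uniformly in $\sigma$ and $\tau$), and estimating $\langle v\rangle^m \lesssim \langle v_\tau\rangle^m\langle v_*\rangle^m$, one obtains a bound of the form $\norm[L^1_{\gamma+1}]{f}\norm[W^{1,1}_{m+\gamma+1}]{g}$.

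For $I_2$, I would invoke the cancellation lemma of Alexandre--Desvillettes--Villani--Wennberg, which states that the operator $R(f)(v):=\int\int B(v-v_*,\sigma)(f'_*-f_*)\, d\sigma dv_*$ can, after the change of variables $v_*\mapsto v'_*$ at fixed $(v,\sigma)$, be rewritten as a convolution $(|\cdot|^\gamma * f)(v)$ times an integrable kernel, giving the pointwise bound $|R(f)(v)| \lesssim \langle v\rangle^\gamma\, \norm[L^1_\gamma]{f}$. Consequently,
\begin{align*}
	\norm[L^1_m]{I_2} \leq \int_{\R^3} |g(v)|\langle v\rangle^m\, |R(f)(v)|\, dv \lesssim \norm[L^1_\gamma]{f}\,\norm[L^1_{m+\gamma}]{g} \leq \norm[L^1_{m+\gamma}]{f}\,\norm[L^1_{m+\gamma}]{g}.
\end{align*}
Summing the two estimates yields the claim. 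The main technical obstacle is the cancellation argument for $I_2$: the change of variables $v_*\mapsto v'_*$ has a non-trivial Jacobian, and one must carefully exploit the oddness of $b(\cos\theta)-b(-\cos\theta)\cdot \mathrm{Jac}$ in $\sigma$ over a half-sphere to extract the integrable kernel; with the symmetrization convention $b\subset [0,\pi/2]$ adopted earlier, this is exactly the setup of the cancellation lemma used in Tristani's paper, so the argument transports directly.
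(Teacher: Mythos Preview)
Your decomposition $Q(f,g)=I_1+I_2$, using a first-order Taylor expansion of $g$ on $I_1$ together with the integrability of $b(\cos\theta)\sin\theta\sin(\theta/2)$ for $s<1/2$, and the Alexandre--Desvillettes--Villani--Wennberg cancellation lemma on $I_2$, is exactly the route the paper points to via \cite[Proposition~3.1]{Tristani2014ExponentialConvergenceEquilHomogBoltz}; there is no alternative argument in the paper.

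One point in the $I_1$ step deserves tightening. After the single change $v\mapsto v_\tau$ at fixed $(v_*,\sigma,\tau)$, the factor $|f(v'_*)|$ still sits at $v'_*$, which now depends nontrivially on $(v_\tau,v_*,\sigma,\tau)$; so integrating $|f(v'_*)|\langle v_*\rangle^{m}$ over $v_*$ does not directly produce $\norm[L^1_{\gamma+1}]{f}$, and your weight placement $\langle v\rangle^m\lesssim\langle v_\tau\rangle^m\langle v_*\rangle^m$ leaves the $\langle v_*\rangle^m$ factor on the wrong variable. The standard cure is to first perform the pre-post change $(v,v_*,\sigma)\to(v',v'_*,n)$ (Jacobian one, but note that $\sigma$ must be changed as well), which brings $f$ to the unprimed variable $v_*$, and only then Taylor-expand $g-g'$ and carry out a change of the remaining velocity variable; alternatively one can argue via the Carleman-type change $v_*\mapsto v'_*$ at fixed $(v,\sigma)$, whose Jacobian $\tfrac14\cos^2(\theta/2)$ is bounded below on $\theta\in[0,\pi/2]$. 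With either route the weights fall as stated, so the gap is purely bookkeeping rather than conceptual.
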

\begin{pro}\label{pro:LoopInequality2}
	Under the assumptions of Theorem \ref{thm:MainTheorem} there are a constant $ \Omega $ and a sufficiently small constant $ \varepsilon_0''\in(0,1) $ such that the following holds. Assuming that \eqref{eq:LoopInequality1} is true on some interval $ [0,T] $ and $ \varepsilon\leq \varepsilon_0'' $ and $ \beta_0\leq \varepsilon_0'' $ we have for all $ t\in [0,T] $
	\begin{align}\label{eq:PropLoopInequality2}
		\norm[\mathcal{H}^1_{p_0}]{h_t}\leq \Omega\left( \dfrac{\varepsilon}{(1+t)^2}+\dfrac{1}{\eta_t^2}\right) .
	\end{align}
\end{pro}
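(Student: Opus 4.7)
The plan is to prove an $L^1_m$ bound on $h$ directly from hypothesis \eqref{eq:LoopInequality1} alone, and then invoke Proposition \ref{pro:PrepLoopInequality2} one final time to promote it to the desired $\mathcal{H}^1_{p_0}$ estimate \eqref{eq:PropLoopInequality2}. Since $m = p_0 - 2 - 4s - 3/2 > 2$, the linearized semigroup $e^{-t\Lin}$ decays exponentially on $L^1_m$ by Lemma \ref{lem:LinearDecay}; this is the key decay mechanism at the $L^1$ level, where no self-adjoint spectral-gap argument is available.

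\textbf{Duhamel formula and source bounds.} I rewrite \eqref{eq:ErrorEq} using the time-changed semigroup generated by $-\eta_t\Lin$. Setting $M(t,s) := \int_s^t \eta_r\,dr$ and $U(t,s) := e^{-M(t,s)\Lin}$,
\begin{align*}
h_t = U(t,0)h_0 + \int_0^t U(t,s)\bigl[S_s + \mathscr{R}h_s - \alpha_s^2\div(v h_s) + \eta_s Q(h_s,h_s)\bigr]ds.
\end{align*}
Since $h_s$ and each source lie in $(\ker\Lin)^\perp$, Lemma \ref{lem:LinearDecay} yields $\norm[L^1_m]{U(t,s)g} \leq C_m e^{-\lambda_m M(t,s)}\norm[L^1_m]{g}$. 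Using Lemma \ref{lem:FirstOrderEstimate} together with the embedding $H^k_p \hookrightarrow L^1_m$ for large $k,p$ gives $\norm[L^1_m]{S_s} \lesssim 1/\eta_s$. For the linear perturbation I combine Lemma \ref{lem:L1EstimateCollOp}, Lemma \ref{lem:FirstOrderEstimate}, the bound $|\alpha_s^2|\lesssim \norm[\mathcal{H}^1_{p_0}]{h_s}$, and the continuous embedding $\mathcal{H}^1_{p_0}\hookrightarrow W^{1,1}_{m+\gamma+1}$ (valid since $p_0 > m+\gamma+1+3/2$, using $\gamma<1$ and $s<1/2$), to obtain $\norm[L^1_m]{\mathscr{R}h_s - \alpha_s^2\div(vh_s)} \lesssim \norm[\mathcal{H}^1_{p_0}]{h_s}$, and similarly $\eta_s\norm[L^1_m]{Q(h_s,h_s)} \lesssim \eta_s\norm[\mathcal{H}^1_{p_0}]{h_s}^2$.

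\textbf{Bootstrap.} I run a continuation argument modeled on Step~3 of the proof of Proposition \ref{pro:PrepLoopInequality2}. Assume the tentative bound $\norm[L^1_m]{h_t} \leq \Omega'\bigl(\varepsilon/(1+t)^2 + 1/\eta_t^2\bigr)$ on a maximal subinterval $[0,T^*]$. Proposition \ref{pro:PrepLoopInequality2} immediately promotes it to $\norm[\mathcal{H}^1_{p_0}]{h_t} \leq C'(\Omega'+1)\bigl(\varepsilon/(1+t)^2 + 1/\eta_t^2\bigr)$, which I feed back into the Duhamel estimate. The remaining task is to bound time integrals $\int_0^t e^{-\lambda_m M(t,s)}F(s)\,ds$ for the profiles $F(s) = 1/\eta_s$, $\varepsilon/(1+s)^2 + 1/\eta_s^2$, and $\eta_s\bigl(\varepsilon/(1+s)^2 + 1/\eta_s^2\bigr)^2$. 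These are all controlled by the same integration-by-parts technique employed in Step~2 of the proof of Proposition \ref{pro:PrepLoopInequality2}, exploiting $|\eta_t'/\eta_t|\leq C$ together with the lower bound $\eta_t\geq c_0(\bar\eta(\beta_0)+t)$ from \eqref{eq:GrowthCollisionConstant} to absorb the error terms. Each integral is bounded by a constant multiple of $\varepsilon/(1+t)^2 + 1/\eta_t^2$, yielding
\begin{align*}
\norm[L^1_m]{h_t} \leq \tilde{C}\bigl(1 + (\Omega'+1)(\varepsilon+\bar\eta(\beta_0)^{-1}) + (\Omega'+1)^2(\varepsilon+\bar\eta(\beta_0)^{-1})\bigr)\Bigl(\frac{\varepsilon}{(1+t)^2}+\frac{1}{\eta_t^2}\Bigr).
\end{align*}
Fixing $\Omega' := 4\tilde{C}$ and choosing $\varepsilon_0''$ small enough that the prefactor is at most $\Omega'/2$ closes the bootstrap, so the $L^1_m$ bound holds on $[0,T]$ with this universal $\Omega'$. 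A final application of Proposition \ref{pro:PrepLoopInequality2} delivers \eqref{eq:PropLoopInequality2} with $\Omega := C'(\Omega'+1)$.

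\textbf{Main obstacle.} The delicate step is the quadratic term $\eta_s Q(h_s,h_s)$: the large prefactor $\eta_s$ would at first sight destroy any decay, but it is compensated exactly because $\norm[\mathcal{H}^1_{p_0}]{h_s}^2 \sim 1/\eta_s^4$, so that $\eta_s\norm[\mathcal{H}^1_{p_0}]{h_s}^2 \sim 1/\eta_s^3$ remains integrable against the exponential semigroup kernel after integration by parts. Tracking the $\Omega'$-dependence through the two nested applications of Proposition \ref{pro:PrepLoopInequality2} and ensuring that the smallness of $\varepsilon$ and of $\bar\eta(\beta_0)^{-1}$ (equivalently of $\beta_0$) absorbs the $(\Omega'+1)^2$ factor is what ultimately forces $\varepsilon_0''$ to be a universal constant.
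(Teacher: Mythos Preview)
Your proposal is correct and follows essentially the same approach as the paper's proof: Duhamel's formula in $L^1_m$ using Lemma~\ref{lem:LinearDecay} for the semigroup decay on $(\ker\Lin)^\perp$, the same source bounds via Lemma~\ref{lem:FirstOrderEstimate} and Lemma~\ref{lem:L1EstimateCollOp}, the same integration-by-parts treatment of the time integrals exploiting $|\eta_t'/\eta_t|\leq C$ and \eqref{eq:GrowthCollisionConstant}, and the identical bootstrap (assume the $L^1_m$ bound with constant $\Omega'$, invoke Proposition~\ref{pro:PrepLoopInequality2} to get the $\mathcal{H}^1_{p_0}$ bound with constant $C'(\Omega'+1)$, close the loop). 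Your final prefactor and the paper's $C''[1+\Omega'/\bar\eta+(\Omega')^2(\varepsilon+1/\bar\eta)]$ differ only cosmetically.
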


\begin{proof}
	We use Duhamel's formula together with the properties of the semigroup $ P_{0,t} $ generated by $ \eta_t\Lin $ to obtain from \eqref{eq:ErrorEq}
	\begin{align}\label{eq:ProofDuhamelEstimate}
		\begin{split}
			\norm[L^1_m]{h_t}\leq \norm[L^1_m]{P_{0,t}h_0} + \int_{0}^{t} &\bigg[ \norm[L^1_m]{P_{r,t}(S_r+(\mathscr{R}h)_r-\alpha_r^2\div(vh_r))}
			\\
			&+ \eta_r\norm[L^1_m]{P_{r,t}Q(h_r,h_r)} \bigg] dr.
		\end{split}
	\end{align}
	
	\textit{Step 1:} Let us first estimate \eqref{eq:ProofDuhamelEstimate}. As can be checked by the time-change $ \tau(t)=\int_0^t\eta_s\,ds $, Lemma~\ref{lem:LinearDecay} yields
	\begin{align*}
		\norm[L^1_m]{P_{s,t}g-\Pi_0g}\leq C_me^{-E_t+E_s}\norm[L^1_m]{g},
	\end{align*}
	where $ E_t:=\lambda_m \int_{0}^{t}\eta_sds $ and $ \lambda_m>0 $ is defined by $ \Lin $, see Lemma \ref{lem:LinearDecay}. Since $ S_r+(\mathscr{R}h)_r-\alpha_r^2\div(vh_r) $ and $ Q(h_r,h_r) $ are in $ (\ker\Lin)^\perp $ we obtain
	\begin{align*}
		\norm[L^1_m]{h_t}\leq e^{-E_t}\norm[L^1_m]{h_0} +\int_0^t e^{-E_t+E_r} &\bigg[ \norm[L^1_m]{S_r}+\norm[L^1_m]{(\mathscr{R}h)_r}+\left| \alpha_r^2 \right| \norm[L^1_m]{\div(vh_r)} 
		\\
		&+ \eta_r\norm[L^1_m]{Q(h_r,h_r)} \bigg]\, dr. 
	\end{align*}
	We recall that with $ m>2 $
	\begin{align}\label{eq:ProofLagrangeMultEstimate}
		|\alpha_t^1|\leq \dfrac{C}{\eta_t}, \quad |\alpha_t^2|\leq C\norm[L^1_m]{h_t}.
	\end{align}
	Let us now estimate term by term. 
	\begin{enumerate}[(i)]
		\item We obtain from Lemma \ref{lem:FirstOrderEstimate} and Lemma \ref{lem:L1EstimateCollOp}
		\begin{align*}
		\norm[L^1_m]{S}\lesssim  \dfrac{1}{\eta_t}+\dfrac{1}{\eta_t^2}\lesssim \dfrac{1}{\eta_t}.
		\end{align*}
		
		\item For the term $ \mathscr{R}h $ we obtain similarly
		\begin{align*}
			\norm[L^1_m]{\mathscr{R}h}&\lesssim \left( |\alpha_t^2| + \norm[W^{1,1}_{m+1}]{h_t} + |\alpha_t^1|\norm[W^{1,1}_{m+1}]{h_t} + \eta_t \norm[W^{1,1}_{\gamma+m+1}]{\bar{\mu}_t}\norm[W^{1,1}_{\gamma+m+1}]{h_t} \right)
			\\
			&\lesssim \norm[W^{1,1}_{\gamma+m+1}]{h_t} \lesssim \norm[H^1_{\gamma+m+1+2s+3/2}]{h_t} \lesssim  \norm[\mathcal{H}^1_{p_0}]{h_t},
		\end{align*}
		where we used $ p_0-2s\geq m+2+2s+3/2\geq m+\gamma+1+2s+3/2 $, $ \gamma\leq1 $. 
		
		\item In addition, we have with \eqref{eq:ProofLagrangeMultEstimate}
		\begin{align*}
			|\alpha_t^2|\norm[L^1_m]{\div(vh_t)} \lesssim \norm[W^{1,1}_{m+1}]{h_t}\norm[L^1_m]{h_t}\lesssim \norm[\mathcal{H}^1_{p_0}]{h_t}\norm[L^1_m]{h_t}\lesssim \norm[\mathcal{H}^1_{p_0}]{h_t}^2.
		\end{align*}
		
		\item Finally, we apply Lemma \ref{lem:L1EstimateCollOp} to get
		\begin{align*}
			\norm[L^1_m]{Q(h,h)}\lesssim \left( \norm[L^1_{m+\gamma}]{h}^2+\norm[L^1_{\gamma+1}]{h}\norm[W^{1,1}_{m+\gamma+1}]{h} \right) \lesssim \norm[\mathcal{H}^1_{p_0}]{h_t}^2.
		\end{align*}
	\end{enumerate}
	Putting all bounds together yields
	\begin{align}\label{eq:ProofDuhamelEstimate2}
		\norm[L^1_m]{h_t}\leq C_me^{-E_t}\norm[L^1_m]{h_0} + C\int_0^te^{-E_t+E_s} \left( \dfrac{1}{\eta_s} + \norm[\mathcal{H}^1_{p_0}]{h_s} + \norm[\mathcal{H}^1_{p_0}]{h_t}^2+\eta_s\norm[\mathcal{H}^1_{p_0}]{h_s}^2 \right) \, ds.
	\end{align}
	
	\textit{Step 2:} Let us now give an a priori estimate assuming
	\begin{align}\label{eq:ProofProp2APrioriEst}
		\norm[L^1_m]{h_t}\leq \Omega'\left( \dfrac{\varepsilon}{(1+t)^2}+\dfrac{1}{\eta_t^2}\right), \quad \norm[\mathcal{H}^1_{p_0}]{h_t}\leq C'(\Omega'+1)\left( \dfrac{\varepsilon}{(1+t)^2}+\dfrac{1}{\eta_t^2}\right).
	\end{align}
	Here, $ C' $ is the constant determined in Proposition \ref{pro:PrepLoopInequality2} in \eqref{eq:PropH1Est}. Let us define for brevity $ \Omega:=C'(\Omega'+1) $.
	We estimate the time integrals in \eqref{eq:ProofDuhamelEstimate2} using \eqref{eq:ProofProp2APrioriEst} and obtain
	\begin{align}\label{eq:ProofDuhamelEstimate3}
		\norm[L^1_m]{h_t}&\leq C_me^{-E_t}\varepsilon + C\int_0^te^{-E_t+E_s} \left( \dfrac{1}{\eta_s} + \dfrac{\Omega\varepsilon}{(1+s)^2} + \dfrac{\Omega}{\eta_s^2}+\dfrac{\eta_s\Omega^2\varepsilon^2}{(1+s)^4}+\dfrac{\Omega^2}{\eta_s^3} \right) \, ds.
	\end{align}
	Let us estimate term by term.
	\begin{enumerate}[(i)]
		\item As in the proof of Proposition \ref{pro:PrepLoopInequality2} (Step 2, (i)) we can use partial integration and $ \bar{\eta}(\beta_0) $ large enough to get
		\begin{align*}
			\int_0^te^{-E_t+E_s}\dfrac{1}{\eta_s} \, ds \leq\dfrac{C_1}{\eta_t^2}.
		\end{align*}
		
		\item For the second and third term in \eqref{eq:ProofDuhamelEstimate3} we estimate
		\begin{align*}
			\int_0^te^{-E_t+E_s}\left( \dfrac{\varepsilon}{(1+s)^2}+\dfrac{1}{\eta_s^2} \right)  \, ds \leq \dfrac{C_2\varepsilon}{\eta_t(1+t)^2}+\dfrac{C_2}{\eta_t^3}+C_2\int_0^te^{-E_t+E_s}\left( \dfrac{\varepsilon}{\eta_s(1+s)^2}+\dfrac{1}{\eta_s^3} \right)  \, ds.
		\end{align*}
		For $ c_0\bar{\eta}(\beta_0)\leq \eta_t $ large enough, i.e. $ \beta_0\leq \varepsilon_0'' $ small enough, we can absorb the last term into the left-hand side.
	
		\item Finally, we treat the last two terms in \eqref{eq:ProofDuhamelEstimate3} at once. We obtain as in the proof of Proposition~\ref{pro:PrepLoopInequality2} in Step 2, (i) and (ii)
		\begin{align*}
			\int_0^te^{-E_t+E_s}\left( \dfrac{\varepsilon^2\, \eta_s}{(1+s)^4}+\dfrac{1}{\eta_s^3} \right) \, ds \leq \dfrac{C_3\varepsilon^2}{(1+t)^4}+\dfrac{C_3}{\eta_t^4}.
		\end{align*}
	\end{enumerate}
	Combining the above estimates leads to
	\begin{align*}
		\norm[L^1_m]{h_t}\leq \left[ C_m+C_1+\dfrac{C_2\Omega}{\bar{\eta}}+C_3\Omega^2\left( \varepsilon+\dfrac{1}{\bar{\eta}} \right) \right]\left( \dfrac{\varepsilon}{(1+t)^2}+\dfrac{1}{\eta_t^2} \right).
	\end{align*}
	By the definition of $ \Omega=C'(\Omega'+1) $ we get for some $ C'' $
	\begin{align*}
		\norm[L^1_m]{h_t}\leq C''\,\left[ 1+\dfrac{\Omega'}{\bar{\eta}}+(\Omega')^2\left( \varepsilon+\dfrac{1}{\bar{\eta}} \right) \right]\left( \dfrac{\varepsilon}{(1+t)^2}+\dfrac{1}{\eta_t^2} \right).
	\end{align*}
	If we would have $ \Omega'= 6\max\left\lbrace C'',1\right\rbrace $ and  $ \varepsilon $, $ \beta_0\leq \varepsilon_0'' $ are sufficiently small, such that
	\begin{align}\label{eq:ProofConditionConstants}
		\dfrac{\Omega'}{\bar{\eta}(\beta_0)} \leq 1, \quad (\Omega')^2\left( \varepsilon+\dfrac{1}{\bar{\eta}(\beta_0)} \right) \leq 1
	\end{align}
	holds, then we would obtain
	\begin{align}\label{eq:ProofAPrioriL1Est}
		\norm[L^1_m]{h_t}\leq \dfrac{\Omega'}{2}\left( \dfrac{\varepsilon}{(1+t)^2}+\dfrac{1}{\eta_t^2}\right).
	\end{align}
	In the next step, we show that this particular choice enables us to conclude the proof.

	\textit{Step 3: Continuation argument.} We now prove that \eqref{eq:ProofProp2APrioriEst} holds using a continuation argument. Here, we define $ \Omega'= 6\max\left\lbrace C'',1\right\rbrace $ as in the end of the last step. We can assume without loss of generality that $ \Omega'> C_* $, where $ C_* $ satisfies
	\begin{align*}
		\norm[L^1_m]{g}\leq C_*\norm[\mathcal{H}^1_{p_0}]{g}.
	\end{align*} 
	Let us also recall that $ C' $ is the constant in Proposition \ref{pro:PrepLoopInequality2}. Furthermore, we choose $ \varepsilon_0''\leq \varepsilon_0' $ such that \eqref{eq:ProofConditionConstants} is valid for $ \varepsilon\leq \varepsilon_0'' $, $ \beta_0\leq \varepsilon_0'' $. Note that here $ \varepsilon_0' $ is the constant in Proposition \ref{pro:PrepLoopInequality2}, which depends our choice of $ \Omega' $. However, $ \Omega' $ is now fixed. Let us now proceed with the continuation argument.
	
	First, the estimates \eqref{eq:ProofProp2APrioriEst} are true on some small interval $ [0,t_0] $, since by assumption $ \norm[\mathcal{H}^1_{p_0}]{h_0}\leq \varepsilon $ and $ \norm[L^1_{m}]{h_0}\leq C_*\varepsilon $. Let us now assume that \eqref{eq:ProofProp2APrioriEst} hold on some interval $ [0,t_1] $. We want to extend it by continuity to some larger interval. For $ t\in [0,t_1] $ we obtain from the previous step that \eqref{eq:ProofAPrioriL1Est} is valid. Hence, we can extend this bound on some larger interval $ [0,t_2] $, $ t_2>t_1 $. Using now Proposition \ref{pro:PrepLoopInequality2} (noting that all the assumptions are satisfied) on the interval $ [0,t_2] $ we get the second estimate in \eqref{eq:ProofProp2APrioriEst}. As a consequence \eqref{eq:ProofProp2APrioriEst} is valid on the whole interval $ [0,T] $. This yields \eqref{eq:PropLoopInequality2} by defining the numerical constant $ \Omega:=C'(\Omega'+1) $ and concludes the proof.
\end{proof}

\subsubsection{Conclusion of proof}
With this preparation we can give the proof of Theorem \ref{thm:MainTheorem}.
\begin{proof}[Proof of Theorem \ref{thm:MainTheorem}]
	We select the constant $ \Omega $ as well as $ \varepsilon_0 $ to ensure that \eqref{eq:LoopInequality1}, \eqref{eq:LoopInequality2} hold for all times. We make the following choices.
	\begin{enumerate}[(i)]
		\item Define $ \Omega $ as in Proposition \ref{pro:LoopInequality2}.
		
		\item Select $ \varepsilon_0\in(0,1) $ such that $ \varepsilon_0\leq \varepsilon_0'' $, the constant $ \varepsilon_0'' $ given in Proposition \ref{pro:LoopInequality2}, and 
		\begin{align}\label{eq:ProofEpsilonCond}
			\exp\left( c\gamma\, \Omega\,\int_0^\infty\left( \dfrac{\varepsilon}{(1+t)^2}+\dfrac{1}{c_0^2(\bar{\eta}(\beta_0)+t)^2} \right) \, dt \right)\leq 2
		\end{align}
		holds for all $ \varepsilon,\, \beta_0\leq \varepsilon_0 $. The integral on the left-hand side is motivated by the term $ R_T(\varepsilon,\beta_0) $ in Lemma \ref{lem:LoopInequality1}. Using formally $ \eta_t\geq c_0(\bar{\eta}+t) $, according to \eqref{eq:GrowthCollisionConstant}, gives the above integral for $ T=\infty $.
	\end{enumerate}
	Finally, we use again a continuation argument to prove \eqref{eq:LoopInequality1} and \eqref{eq:LoopInequality2}. First of all, by continuity the estimate \eqref{eq:LoopInequality1} holds on some small interval $ [0,t_0] $, $ t_0>0 $, since $ \eta_0=\nu_0\beta_0^{-\gamma/2}=Z_0(\beta_0) $. Recall the definition of $ Z_t(\beta_0) $ in \eqref{eq:GrowthInversTemp}. The assumptions of Proposition \ref{pro:LoopInequality2} are satisfied on $ [0,t_0] $, so that \eqref{eq:LoopInequality2} is valid on $ [0,t_0] $ as well.
	
	Let us assume that \eqref{eq:LoopInequality1} and \eqref{eq:LoopInequality2} hold on some interval $ [0,t_1] $. This interval can be assumed to be closed by continuity. Lemma \ref{lem:LoopInequality1} yields for $ t\in[0,t_1] $
	\begin{align*}
		\exp\left( -c\gamma \, 
		\Omega_2\, R_{t_1}(\varepsilon,\beta_0)\right)\, Z_t(\beta_0)\leq \eta_t \leq \exp\left( c\gamma \, 
		\Omega_2\, R_{t_1}(\varepsilon,\beta_0)\right)\, Z_t(\beta_0).
	\end{align*} 
	Using $ \eta_t\geq c_0(\bar{\eta}+t) $ for $ t\in[0,t_1] $, we get with \eqref{eq:ProofEpsilonCond}
	\begin{align*}
		\dfrac{1}{2}Z_t(\beta_0)\leq \eta_t \leq 2Z_t(\beta_0).
	\end{align*}
	Hence, we can extend \eqref{eq:LoopInequality1} on some larger interval $ [0,t_2] $, $ t_2>t_1 $. On this interval we can apply Proposition \ref{pro:LoopInequality2}, which yields also \eqref{eq:LoopInequality2} on $ [0,t_2] $. Thus, both \eqref{eq:LoopInequality1} and \eqref{eq:LoopInequality2} hold for all times. These estimates imply \eqref{eq:SecondOrderHilbertExpDecay} and \eqref{eq:InvTempAsyBehav}. Finally, let us note that \eqref{eq:FirstOrderHilbertExpDecay} is a consequence of Lemma~\ref{lem:FirstOrderEstimate}.
\end{proof}

\section{Application to homoenergetic solutions}
\label{sec:Application}
In this section, we apply Theorem \ref{thm:MainTheorem} to homoenergetic solutions in the case of \textit{simple shear}, \textit{simple shear with decaying planar dilatation/shear} and \textit{combined orthogonal shear} to conclude Theorem \ref{thm:HomoenergticSol1} and Theorem \ref{thm:HomoenergticSol2}. To this end, let us first give a lemma that allows to verify assumption $ (II) $ in Theorem \ref{thm:MainTheorem}.
\begin{lem}\label{lem:AssumptionII}
	Let $ L \in C^1([0,\infty);\R^{3\times3}) $ and $ \nu\in C^1([0,\infty);(0,\infty)) $ satisfy assumption $ (I) $ in Theorem \ref{thm:MainTheorem}. Consider the decomposition of $ L_t $ into its trace-free and trace part $ L_t= A_t+ b_t\, I $. Assume that
	\begin{enumerate}[(i)]
		\item $ A_t = A^0 + A^1_t $ with $ \trace A^0=0 $, $ A^0\neq 0 $ and $ A^1_t\to 0 $ as $ t\to \infty $;
		\item $ b_t = b_t^0+b^1_t $ with $ b_t^0\geq0 $ and $ |b^1_t|\leq C/(1+t)^2 $ for all $ t\geq0 $ with some constant $ C>0 $;
		\item $ t\mapsto \nu_t $ is bounded on $ [0,\infty) $;
		\item we have for all $ t\geq 1 $
		\begin{align*}
			N_t:=\int_0^t \dfrac{\nu_t}{\nu_s}e^{-\int_s^t b_r\, dr}\, ds \approx t.
		\end{align*}
	\end{enumerate}
	Then, assumption $ (II) $ in Theorem \ref{thm:MainTheorem} is satisfied.
\end{lem}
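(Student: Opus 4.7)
The goal is to show $Z_t(1)\approx 1+t$ for all $t\geq 0$, where
\[
Z_t(1) = \nu_t e^{-\gamma\int_0^t b_r\, dr} + \frac{\gamma}{3}\int_0^t \frac{\nu_t\, a_s}{\nu_s}\, e^{-\gamma\int_s^t b_r\, dr}\, ds.
\]
The plan is first to collect basic bounds. Since $\Lin^{-1}$ is positive on $(\ker\Lin)^\perp$ and $v\cdot A_t v\,\mu$ lies in that complement, $a_t$ is a continuous, positive quadratic form in $A_t$; by (i) we have $A_t\to A^0\neq 0$, so $a_t\to \bar a = \dualbra{v\cdot A^0v\,\mu}{\Lin^{-1}[v\cdot A^0v\,\mu]}_{L^2(\mu^{-1/2})}>0$. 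Hence $a_t$ is uniformly bounded above and satisfies $a_t\geq \bar a/2$ for $t\geq T_0$. Assumption (ii) gives $\sup_{s,t\geq 0}\bigl|\int_s^t b_r^1\, dr\bigr|\leq C$, so in every exponential one may replace $b_r$ by $b_r^0$ up to multiplicative constants; and since $b_r^0\geq 0$ and $\nu$ is bounded by (iii), the initial term of $Z_t(1)$ is $\lesssim 1$, while $Z_0(1)=\nu_0>0$ gives $Z_t(1)\approx 1$ on any compact interval by continuity.

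For the lower bound, exploit $\gamma\in(0,1)$ and $b_r^0\geq 0$, which yield $e^{-\gamma\int_s^t b_r^0\, dr}\geq e^{-\int_s^t b_r^0\, dr}$. Restricting to $s\geq T_0$ where $a_s\gtrsim 1$, I obtain
\[
\int_0^t \frac{\nu_t a_s}{\nu_s}\, e^{-\gamma\int_s^t b_r\, dr}\, ds \gtrsim \int_{T_0}^t \frac{\nu_t}{\nu_s}\, e^{-\int_s^t b_r\, dr}\, ds = N_t - \int_0^{T_0}\frac{\nu_t}{\nu_s}\, e^{-\int_s^t b_r\, dr}\, ds,
\]
the subtracted piece being uniformly bounded by a multiple of $T_0$ (using that $\nu_s$ is bounded below on $[0,T_0]$ by continuity, and $\nu_t$ bounded). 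Assumption (iv) then gives the bound $\gtrsim t$ for $t$ large, so $Z_t(1)\gtrsim 1+t$.

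The upper bound is the heart of the argument. I would apply H\"older's inequality with conjugate exponents $1/\gamma$ and $1/(1-\gamma)$ to the factorization
\[
\frac{\nu_t}{\nu_s}\, e^{-\gamma\int_s^t b_r^0\, dr} = \Bigl(\frac{\nu_t}{\nu_s}\, e^{-\int_s^t b_r^0\, dr}\Bigr)^{\!\gamma}\, \Bigl(\frac{\nu_t}{\nu_s}\Bigr)^{\!1-\gamma},
\]
obtaining $\int_0^t (\nu_t a_s/\nu_s)\, e^{-\gamma\int_s^t b_r\, dr}\, ds \leq C\, N_t^\gamma\, M_t^{1-\gamma}$, where $M_t := \int_0^t (\nu_t/\nu_s)\, ds$. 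By (iv), $N_t\lesssim t$, so it remains to show $M_t\lesssim 1+t$. For this I would exploit the identity
\[
M_t - N_t = \int_0^t \frac{\nu_t}{\nu_s}\, b_s\, N_s\, ds,
\]
derived by comparing the linear ODEs $M_t' = (\nu_t'/\nu_t)M_t + 1$ and $N_t' = (\nu_t'/\nu_t - b_t)N_t + 1$, subtracting, and solving by variation of parameters with common integrating factor $\nu_0/\nu_t$. Substituting $N_s\lesssim 1+s$ from (iv), splitting $b=b^0+b^1$, controlling the $b^1$-part via the pointwise decay $|b_r^1|\leq C/(1+r)^2$, and treating the $b^0$-piece by a Gr\"onwall iteration of the same H\"older estimate, one arrives at $M_t\lesssim 1+t$, which gives $Z_t(1)\lesssim 1+t$ and completes the proof.

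The main obstacle is the bound $M_t\lesssim 1+t$: it is not a direct consequence of the hypotheses, since $\nu$ is only assumed bounded above in (iii), so $\nu_t/\nu_s$ need not be uniformly bounded and $M_t\lesssim t$ genuinely uses the coupling between $\nu$ and $b$ provided by (iv) via the identity above. In the three cases of Theorem \ref{thm:HomoenergticSol1} (after change of time in the combined orthogonal shear case) the function $\nu_t$ is in fact of polynomial form $(1+t)^{-\beta}$ with $\beta\geq 0$, so that $\nu_t/\nu_s\leq 1$ for $s\leq t$ and $M_t\leq t$ is trivial; this is the regime in which the lemma is to be applied.
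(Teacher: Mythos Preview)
Your lower bound and the control of $a_t$ are correct and match the paper's. The upper bound, however, is where your argument remains incomplete: the H\"older step reduces to bounding $M_t=\int_0^t(\nu_t/\nu_s)\,ds\lesssim 1+t$, and the proposed route via the identity $M_t-N_t=\int_0^t(\nu_t/\nu_s)\,b_s N_s\,ds$ together with a ``Gr\"onwall iteration of the same H\"older estimate'' is not spelled out and does not obviously close --- after substituting $N_s\lesssim 1+s$ you are left with $\int_0^t(\nu_t/\nu_s)\,b_s^0(1+s)\,ds$, which you have not bounded in terms of $M_t$ or $N_t$. Your final remark that in all the applications $\nu$ is nonincreasing (so $M_t\leq t$ trivially) is correct and sufficient for the theorems, but it does not prove the lemma in the stated generality.

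The paper's proof takes a much shorter route: once $a_t\approx 1$, it simply identifies the second term of $Z_t(1)$ with the integral in assumption~(iv). This works because the intended exponent in (iv) is $e^{-\gamma\int_s^t b_r\,dr}$, not $e^{-\int_s^t b_r\,dr}$ --- the applications in Section~\ref{sec:Application} confirm this (for simple shear with decaying planar dilatation the paper computes $N_t$ with exponent $\gamma/3+1$, which only arises from $e^{-\gamma\int_s^t b_r\,dr}$ with $b_r\approx 1/(3(1+r))$). With the $\gamma$ in place, the second term of $Z_t(1)$ is literally $\tfrac{\gamma}{3}$ times the (iv)-integral weighted by $a_s$, and the proof is immediate. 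Your H\"older machinery is a genuine attempt to bridge the missing $\gamma$, but the gap you try to close is an artifact of a typo in the lemma rather than a real obstacle.
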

Note that for the matrix $ L $ given by \eqref{eq:SimpleShear}, \eqref{eq:SimpShearDecPlanarDil} or \eqref{eq:CombOrthShear} we have $ \trace L_t= 3b_t\geq 0 $ up to terms of order $ \mathcal{O}(1/t^2) $. This motivates assumption $ (ii) $ in the lemma.
\begin{proof}[Proof of Lemma \ref{lem:AssumptionII}]
	First of all, let us note that 
	\begin{align*}
		a_t=\dualbra{v\cdot A_tv \, \mu}{\Lin^{-1}\left[ v\cdot A_tv \, \mu \right] }_{L^2(\mu^{-1/2})} \to \dualbra{v\cdot A^0v \, \mu}{ \Lin^{-1}\left[ v\cdot A^0v \, \mu \right] }_{L^2(\mu^{-1/2})}>0.
	\end{align*}
	Recall that $ v\cdot A_tv \, \mu\in(\ker\Lin)^\perp $ and that $ \Lin $ is a positive operator on $ (\ker\Lin)^\perp $. Since $ t\mapsto a_t>0 $ is continuous, we have $ 0<c_0\leq a_t \leq C_0 $ for all $ t\geq0 $ and some constants $ c_0,\, C_0>0 $.
	
	Due to assumption $ (ii) $ and $ (iii) $ the first term in $ Z_t(1) $ in \eqref{eq:GrowthInversTemp} is bounded. With the above observation, the second term is equivalent to 
	\begin{align*}
		\int_0^t \dfrac{\nu_t}{\nu_s}e^{-\int_s^t b_r\, dr}\, ds \approx t.
	\end{align*}
	This implies $ Z_t(1)\approx 1+t $ for $ t\geq0 $.
\end{proof}
	Let us now give the proof of Theorem \ref{thm:HomoenergticSol1} and Theorem \ref{thm:HomoenergticSol2}.
	\begin{proof}[Proof of Theorem \ref{thm:HomoenergticSol1} and Theorem \ref{thm:HomoenergticSol2}] First of all, let us recall that a solution $ g $ to \eqref{eq:homenergBE} is related to a solution $ f $ to, cf.\eqref{eq:IndtrodModelHomoengBE},
	\begin{align}\label{eq:IndtrodModelHomoengBEProof}
		\begin{split}
			\partial_t f &= \div \left( \left( L_t-\alpha_t\right) v \, f\right)  + \rho_t\, \beta_t^{-\gamma/2}\, Q(f,f), \quad f(0,\cdot)=f_0(\cdot),
			\\
			\beta_t &= \beta_0\exp \left( 2\int_{0}^t\alpha_s\, ds \right),
			\quad
			\alpha_t:=\dfrac{1}{3}\int v\cdot L_tv \, f_t(v)\,dv,
			\\
			\rho_t &= \exp\left( -\int_0^t \trace L_s \, ds \right)
		\end{split}
	\end{align}	
	via the scaling $ f_t(v)=g_t(v\beta_t^{-1/2}+V_t)\beta_t^{-3/2}\rho_t^{-1} $, cf. Theorem \ref{thm:HomoenergticSol1}. Also recall that with this scaling $ f $ satisfies the normalization \eqref{eq:Normalization}. As already indicated earlier we use equation \eqref{eq:ModelEq} for a specific choice of $ \nu $ to draw conclusions for solutions to \eqref{eq:IndtrodModelHomoengBEProof}. We discuss this reduction in each case of \textit{simple shear}, \textit{simple shear with decaying planar dilatation/shear} and \textit{combined orthogonal shear} separately. Let us recall that the inverse temperature $ \beta_t $ in \eqref{eq:ModelEq} satisfies the following equation
	\begin{align}\label{eq:ProofEqInverseTemp}
		\dfrac{\beta_t'}{2\beta_t} = \alpha_t = \dfrac{1}{3}\int v\cdot L_tv \,  f_t\, dv.
	\end{align}

	\textit{Simple Shear:} The matrix $ L_t $ is given by \eqref{eq:SimpleShear}, in particular it is constant in time and $ L=A $ is trace-free. This implies that the density is constant $ \rho_t=\rho_0 $. We set $ \nu_t\equiv \rho_0 $ and equation \eqref{eq:IndtrodModelHomoengBEProof} reduces to \eqref{eq:ModelEq}. Let us now check the structural conditions of Theorem \ref{thm:MainTheorem}. Assumption $ (I) $ is satisfied. Furthermore, Lemma \ref{lem:AssumptionII} applies with $ N_t=t $, yielding assumption $ (II) $. Note that the definition of $ \bar{\mu} $ in \eqref{eq:HilbertExpFirstOrder3} is the same as in Theorem \ref{thm:HomoenergticSol1}, formula \eqref{eq:HilbertExpFirstOrder2}. The smallness assumptions on $ h_0 $, $ \beta_0 $ as well as the considered spaces coincide with the conditions in Theorem \ref{thm:HomoenergticSol1}. Hence, Theorem \ref{thm:MainTheorem} applies.
	
	As a consequence of \eqref{eq:SecondOrderHilbertExpDecay} and \eqref{eq:InvTempAsyBehav} we get $ \norm[L^1_2]{h_t}=\mathcal{O}((1+t)^{-2}) $. In addition, \eqref{eq:InvTempAsyBehav} implies $ \eta_t=\beta_t^{-\gamma/2} \approx 1+t $. Let us now compute the asymptotics of the inverse temperature \eqref{eq:AsymptoticsInversTempCombOrthogShear}. We plug the decomposition $ f_t=\mu+\bar{\mu}_t+h_t $ into \eqref{eq:IndtrodModelHomoengBEProof}, yielding
	\begin{align*}
		\dfrac{d}{dt}\left( \beta_t^{-\gamma/2} \right)  = \dfrac{\gamma \bar{a}}{3} - \gamma\,  \alpha_t^2 \, \beta_t^{-\gamma/2}, \quad \alpha_t^2:= \dfrac{1}{3}\int v\cdot A v \, h_t(v) \, dv,
	\end{align*}
	where $ \bar{a} $ is given in Theorem \ref{thm:HomoenergticSol1} in \eqref{eq:AsymptoticsSolCorollary1Constant}. We know that $ \beta_t^{-\gamma/2} = \mathcal{O}(t) $ and $ \alpha_t^{(2)} = \mathcal{O}((1+t)^{-2}) $ as $ t\to \infty $. Hence, we have
	\begin{align*}
			\dfrac{d}{dt}\left( \beta_t^{-\gamma/2} \right) = \dfrac{\gamma \bar{a}}{3} +R_t, \quad |R_t| \leq \dfrac{C}{1+t}.
	\end{align*}
	We integrate this in time to get \eqref{eq:AsymptoticsInversTempSimpleShear}. Finally, \eqref{eq:AsymptoticsInversTempSimpleShear} implies $ \eta_t=\beta_t\approx \beta_0^{-\gamma/2}+t=:\zeta_t $. As a consequence, we obtain \eqref{eq:FirstSecondOrderHilbertExpDecay1} from \eqref{eq:SecondOrderHilbertExpDecay} and \eqref{eq:FirstOrderHilbertExpDecay}.
	
	\textit{Simple shear with decaying planar dilatation/shear:} In this case, $ L_t $ is given by \eqref{eq:SimpShearDecPlanarDil}. We define also $ \nu_t:=\rho_t $, where the density is given in \eqref{eq:IndtrodModelHomoengBEProof}. We again check the structural conditions in Theorem \ref{thm:MainTheorem}. Assumption $ (I) $ is satisfied, since $ \sup_{t\geq 0}\norm{L_t}<\infty $, $ \nu_t'/\nu_t=\trace L_t $ and $ L_t'=-L_t^2 $. The latter equation is part of the ansatz of homoenergetic solutions in \eqref{eq:homenergBE}. For assumption $ (II) $ we apply Lemma \ref{lem:AssumptionII}. To this end, we use the decomposition of $ L_t=A_t+b_tI $ into its trace-free and trace part. Here, we have 
	\begin{align*}
		A_t&= A^0+A^1_t=\left( \begin{array}{ccc}
		0 & K_2 & 0 \\ 
		0 & 0 & 0 \\ 
		0 & 0 & 0
		\end{array} \right)+\dfrac{1}{1+t} \left( \begin{array}{ccc}
		0 & K_1K_3 & K_1 \\ 
		0 & 0 & 0 \\ 
		0 & K_3 & 0
		\end{array} \right) + \mathcal{O}\left( \dfrac{1}{(1+t)^2} \right),
		\\
		b_t&=\trace L_t = \dfrac{1}{1+t}+b^2_t, \quad b^2_t = \mathcal{O}\left( \dfrac{1}{(1+t)^2} \right).
	\end{align*}
	Note that $ A^0\neq 0 $ due to $ K_2\neq 0 $. Furthermore, we have
	\begin{align*}
		\nu_t&=\rho_t = \exp\left( -\int_0^t\trace L_s\, ds \right) \approx (1+t)^{-1},
		\\
		N_t &\approx \int_0^t\left( \dfrac{1+s}{1+t}\right)^{\gamma/3+1} \, ds =\dfrac{3}{\gamma+6}\dfrac{(1+t)^{\gamma/3+2}-1}{(1+t)^{\gamma/3+1}}\approx t.
	\end{align*}
	Hence, Lemma \ref{lem:AssumptionII} implies assumption $ (II) $ in Theorem \ref{thm:MainTheorem}. Note again that the definition of $ \bar{\mu} $ in \eqref{eq:HilbertExpFirstOrder3} is the same as in Theorem \ref{thm:HomoenergticSol1}, formula \eqref{eq:HilbertExpFirstOrder2}. 
	
	We can now apply Theorem \ref{thm:MainTheorem}. Again, \eqref{eq:SecondOrderHilbertExpDecay} and \eqref{eq:InvTempAsyBehav} yields $ \norm[L^1_2]{h_t}=\mathcal{O}((1+t)^{-2}) $. We now calculate the asymptotics for $ \beta_t $. We again have with \eqref{eq:ProofEqInverseTemp} and the decomposition $ f_t= \mu+\bar{\mu}_t+h_t $
	\begin{align*}
		\dfrac{d}{dt}\left( \beta_t^{-\gamma/2} \right)  = -\dfrac{\gamma\,  \trace L_t}{3}\beta_t^{-\gamma/2} + \dfrac{\gamma\, a_t}{3\rho_t} - \gamma \alpha_t^2\,  \beta_t^{-\gamma/2}, \quad \alpha_t^2:= \int v\cdot L_t v \, h_t(v) \, dv.
	\end{align*}
	Here, we used
	\begin{align*}
		a_t := \dualbra{v\cdot A_tv \, \mu}{\Lin^{-1}[v\cdot A_tv \, \mu]}_{L^2(\mu^{-1/2})}.
	\end{align*}
	One can see that
	\begin{align*}
		\rho_t^{-1} &= (1+t)\exp\left( \int_0^t r_s \, ds \right) = (1+t)\exp\left( \int_0^\infty r_s \, ds \right) + \mathcal{O}(1),
		\\
		a_t &= \bar{a} + \mathcal{O}\left( \dfrac{1}{1+t} \right).
	\end{align*}
	Here, $ \bar{a} $ and $ r_t $ are given in Theorem \ref{thm:HomoenergticSol1}, see \eqref{eq:AsymptoticsSolCorollary2Constant} and \eqref{eq:AsymptoticsSolCorollary22Constant}. From \eqref{eq:AsymptoticsInversTempSimpleShearDecaying} and $ \nu_t^{-1}=\rho_t^{-1} $ we get $ \beta^{-\gamma/2}=\mathcal{O}(t^2) $. Using this and $ |\alpha_t^2|\lesssim\norm[L^1_2]{h_t} = \mathcal{O}((1+t)^{-2}) $ we obtain
	\begin{align*}
		\dfrac{d}{dt}\left( \beta_t^{-\gamma/2} \right)  = -\dfrac{\gamma }{3(1+t)}\beta_t^{-\gamma/2} + \dfrac{\gamma \bar{a}}{3}(1+t)\exp\left( \int_0^\infty r_s \, ds \right)  + R_t, \quad |R_t|\leq C.
	\end{align*}
	We integrate this ODE yielding
	\begin{align}\label{eq:ProofAsymptotics}
		\beta_t^{-\gamma/2} = \beta_0^{-\gamma/2}(1+t)^{-\gamma/3} + \dfrac{\gamma \bar{a}}{\gamma+6}\exp\left( \int_0^\infty r_s \, ds \right) \left( (1+t)^{2}-1\right) + \tilde{R}_t.
	\end{align}
	Here, we have the lower order term
	\begin{align*}
		|\tilde{R}_t| \leq C \int_0^t \left( \dfrac{1+t}{1+s} \right)^{-\gamma/3}\, ds \leq C (1+t).
	\end{align*}
	Hence, we get \eqref{eq:AsymptoticsInversTempSimpleShearDecaying}. The formula \eqref{eq:ProofAsymptotics}, $ \nu_t = \rho_t\approx (1+t)^{-1} $ and \eqref{eq:AsymptoticsInversTempSimpleShearDecaying} yields
	\begin{align*}
		\eta_t=\nu_t\beta_t^{-\gamma/2} \approx \beta_0^{-\gamma/2}(1+t)^{-1-\gamma/3} + t=:\zeta_t.
	\end{align*}
	Thus, by \eqref{eq:SecondOrderHilbertExpDecay} and \eqref{eq:FirstOrderHilbertExpDecay} we obtain \eqref{eq:FirstSecondOrderHilbertExpDecay2}.
	
	\textit{Combined orthogonal shear:} Here, $ L_t $ is given by \eqref{eq:CombOrthShear}. In this case, the matrix $ L_t $ is not the matrix in equation \eqref{eq:ModelEq}. The reason is that we need to take care of the linear growth of $ L_t $, so that assumption $ (I) $ in Theorem \ref{thm:MainTheorem} is not satisfied.
	
	We have first of all $ \trace L_t=0 $ so that $ \rho_t\equiv \rho_0=1 $. In order to apply Theorem \ref{thm:MainTheorem}, let us introduce the time-change $ \tau=(t+1)^2/2-1/2 $, i.e. $ 1+t= \sqrt{2\tau+1} $ in equation \eqref{eq:IndtrodModelHomoengBEProof}. Hence, $ F(\tau,v):=f(t(\tau),v) $ solves the equation
	\begin{align*}
		\partial_\tau F &= \div\left( \left( \tilde{L}_\tau-\alpha_\tau\right) v\, F \right) + \nu_\tau  \beta_\tau^{-\gamma/2}Q(F,F)
	\end{align*}
	where we defined
	\begin{align*}
		\tilde{L}_\tau &= \dfrac{1}{\sqrt{2\tau+1}}L(t(\tau))=A^0+A^1_\tau= \left( \begin{array}{ccc}
		0 & 0 & -K_1K_3 \\ 
		0 & 0 & 0 \\ 
		0 & 0 & 0
		\end{array} \right)+ \dfrac{1}{\sqrt{2\tau+1}}\left( \begin{array}{ccc}
		0 & K_3 & K_2+K_1K_2 \\ 
		0 & 0 & K_1 \\ 
		0 & 0 & 0
		\end{array} \right),
		\\
		\nu_\tau &= \dfrac{1}{\sqrt{2\tau+1}}.
	\end{align*}
	The above equation is of the form \eqref{eq:ModelEq} and we aim to apply Theorem \ref{thm:MainTheorem}. Note that the first order approximation $ \bar{\mu}_t $ in \eqref{eq:HilbertExpFirstOrder2} can be written as 
	\begin{align*}
		\bar{\mu}_{t(\tau)} = \dfrac{1}{\beta_{t(\tau)}^{-\gamma/2}}\Lin^{-1}\left[ -v\cdot L_{t(\tau)} v \mu \right] = \dfrac{1}{\nu_{\tau}\beta_{t(\tau)}^{-\gamma/2}}\Lin^{-1}\left[ -v\cdot A_\tau v \mu \right].
	\end{align*} 
	Hence, we have \eqref{eq:HilbertExpFirstOrder3} in terms of the time $ \tau $, which is used in Theorem \ref{thm:MainTheorem}.
	
	Let us check now the assumptions in Theorem \ref{thm:MainTheorem}. First of all, $ \tilde{L} $ and $ \nu $ satisfy assumption $ (I) $ in Theorem \ref{thm:MainTheorem}. Moreover, $ \trace \tilde{L}_\tau=3b_\tau=0 $ and the formula for $ \tilde{L}_\tau=A_\tau $ yields the decomposition $ A^0+A^1_\tau $ as in Lemma \ref{lem:AssumptionII} $ (i) $. Furthermore, $ \nu $ satisfies $ (iii) $ in Lemma \ref{lem:AssumptionII}. With 
	\begin{align*}
		N_\tau = \int_0^\tau \dfrac{\nu_{\tau}}{\nu_{\sigma}}\, d\sigma =  \dfrac{2\tau+1}{3}-\dfrac{1}{3\sqrt{2\tau+1}} \approx \tau
	\end{align*}
	we can apply Lemma \ref{lem:AssumptionII} and assumption $ (II) $ in Theorem \ref{thm:MainTheorem} holds.
	
	We have the decomposition $ F_\tau= f_{t(\tau)}= \mu + \bar{\mu}_{t(\tau)}+h_{t(\tau)} $. From \eqref{eq:SecondOrderHilbertExpDecay} and \eqref{eq:InvTempAsyBehav} we have $ \beta_{t(\tau)}^{-\gamma/2} = \mathcal{O}(\tau^{3/2}) $ and $ \norm[L^1_2]{h_{t(\tau)}} =\mathcal{O}((1+\tau)^{-2}) $. Hence, with respect to the original time $ 1+t= \sqrt{1+2\tau} $ we get $ \norm[L^1_2]{h_t}=\mathcal{O}((1+t)^{-4}) $ and $ \beta_t^{-\gamma/2} = \mathcal{O}(t^3) $. Let us now compute the asymptotics of the inverse temperature $ \beta_t $. We use \eqref{eq:IndtrodModelHomoengBEProof} and the decomposition $ f_t=\mu+\bar{\mu}_t+h_t $. This leads to
	\begin{align*}
		\dfrac{d}{dt}\left( \beta_t^{-\gamma/2} \right) = \dfrac{\gamma \, a_t}{3} - \gamma \, \alpha_t^2\, \beta_t^{-\gamma/2}.
	\end{align*}
	Here, we abbreviated
	\begin{align*}
		\alpha^2_t &= \dfrac{1}{3} \int v\cdot L_t v \, 	h_t\,  dv = \mathcal{O}((1+t)^{-3}),
		\\
		a_t &= \dualbra{v\cdot L_t v \mu }{\Lin^{-1}\left[v\cdot L_t v \mu \right]}_{L^2(\mu^{-1/2})} = \bar{a}\, t^2+ \mathcal{O}\left( t \right). 
	\end{align*}
	Note that we used the form of the matrix $ L_t $ in \eqref{eq:CombOrthShear} for the last equality. The constant $ \bar{a} $ is given in \eqref{eq:AsymptoticsSolCorollary3Constant}. Together with $ \beta_t^{-\gamma/2} = \mathcal{O}(t^3) $ we obtain the equation
	\begin{align*}
		\dfrac{d}{dt}\left( \beta_t^{-\gamma/2} \right) = \dfrac{\gamma \, \bar{a} \, t^2}{3} + R_t, \quad |R_t|\leq C (1+t).
	\end{align*}
	Integrating this equation yields \eqref{eq:AsymptoticsInversTempCombOrthogShear}. Also we obtain $ \beta_t^{-\gamma/2}\approx \beta_0^{-\gamma/2}+t^3 $. Thus, we get
	\begin{align*}
		\eta_{t(\tau)}=\nu_\tau \,\beta_{t(\tau)}^{-\gamma/2}\approx \left( \beta_0^{-\gamma/2}+t(\tau)^3\right) (1+t(\tau))^{-1}
	\end{align*}
	and with respect to the original time $ t $
	\begin{align*}
		\eta_{t}\approx \beta_0^{-\gamma/2}(1+t)^{-1}+t^2=:\zeta_t.
	\end{align*}
	Finally, as a consequence of \eqref{eq:SecondOrderHilbertExpDecay} ad \eqref{eq:InvTempAsyBehav} we have
	\begin{align*}
		\norm[\mathcal{H}^1_{p_0}]{h_{t(\tau)}}\leq C'\left(  \dfrac{\varepsilon}{(1+\tau)^2}+\dfrac{16}{\eta_{t(\tau)}^2}\right).
	\end{align*}
	Using the previous estimate for $ \eta_t $ and writing this with respect to the original time $ 1+t= \sqrt{1+2\tau} $ yields \eqref{eq:FirstSecondOrderHilbertExpDecay3}. The same can be done using \eqref{eq:FirstOrderHilbertExpDecay} to get the second estimate in \eqref{eq:FirstSecondOrderHilbertExpDecay3}.
\end{proof}

\section{Collision dominated behavior for cutoff kernels}
\label{sec:Cutoff}
In this final section, we indicate an extension of the previous analysis to cutoff kernels. In particular, we consider the following assumptions.
\paragraph{Assumptions on the kernel.} The collision kernel has the product form $ B(v-v_*,\sigma)=b(n\cdot \sigma)|v-v_*|^\gamma $, where $ b:[-1,1]\rightarrow[0,\infty) $ is a smooth function and $ \gamma $ satisfies $ \gamma\in(0,1] $.

The most prominent application is the case of hard spheres interactions $ B(v-v_*,\sigma)=|v-v_*| $. Let us now give the following $ L^1 $-variant of Theorem \ref{thm:HomoenergticSol1} and Theorem \ref{thm:HomoenergticSol2}.

\begin{thm}\label{thm:HomoenergticSol3}
	Consider equation \eqref{eq:homenergBE} with matrix $ L_t =L_0(I+tL_0)^{-1}  $ having the asymptotic form \eqref{eq:SimpleShear}, \eqref{eq:SimpShearDecPlanarDil} or \eqref{eq:CombOrthShear}. Let $ p_0>3 $ be arbitrary and $ g_0\in \mathcal{W}^{1,1}_{p_0} $. Consider the unique solution $ g $ to \eqref{eq:homenergBE}. Define $ f,\, \bar{\mu} $ as in \eqref{eq:HilbertExpFirstOrder2} and $ h_t(v) := f_t(v)-\mu(v)-\bar{\mu}_t(v) $. 
	
	There are $ \varepsilon_0\in(0,1) $ sufficiently small and a constant $ C'>0 $, depending only on $ p_0 $, $ L_t $ and the collision kernel $ B $, such that: If $ \norm[W^{1,1}_{p_0}]{h_0}=\varepsilon\leq \varepsilon_0 $ and $ \beta_0\leq \varepsilon_0 $, we have in each case the asymptotics \eqref{eq:AsymptoticsInversTempCombOrthogShear}, \eqref{eq:AsymptoticsInversTempSimpleShear} and \eqref{eq:AsymptoticsInversTempSimpleShearDecaying}. Finally, the bounds in \eqref{eq:FirstSecondOrderHilbertExpDecay1}, \eqref{eq:FirstSecondOrderHilbertExpDecay2} and \eqref{eq:FirstSecondOrderHilbertExpDecay3} are true when replacing $ \norm[\mathcal{H}^1_{p_0}]{h_t} $ by $ \norm[W^{1,1}_{p_0}]{h_t} $.
\end{thm}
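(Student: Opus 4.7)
The plan is to carry out the same strategy as in Sections \ref{sec:CollDomBehav}--\ref{sec:Application}, but work in the weighted Sobolev space $W^{1,1}_{p_0}$ instead of $\mathcal{H}^1_{p_0}$. Since the cutoff kernel is regular, the coercivity estimate (Lemma \ref{lem:RegEstLinearizedCollOp}), the anisotropic norm $H^{s,*}_p$, and the delicate $L^2$ commutator estimates (Lemma \ref{lem:CommutatorErstimate}) are no longer needed: the collision operator is bounded on weighted $L^1$-spaces by Lemma \ref{lem:L1EstimateCollOp} (which holds verbatim in the cutoff case with $s=0$), and the whole argument simplifies to a Duhamel estimate together with a continuation scheme.

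First, I would establish a cutoff analogue of Theorem \ref{thm:MainTheorem}: assuming the structural conditions (I)--(II), the solution $f$ to \eqref{eq:ModelEq} with $f_0\in W^{1,1}_{p_0}$ satisfying the normalization \eqref{eq:Normalization} and the smallness $\norm[W^{1,1}_{p_0}]{h_0}=\varepsilon\leq\varepsilon_0$, $\beta_0\leq\varepsilon_0$, obeys
\begin{align*}
\norm[W^{1,1}_{p_0}]{h_t}\leq C'\Bigl(\tfrac{\varepsilon}{(1+t)^2}+\tfrac{1}{Z_t(\beta_0)^2}\Bigr),\qquad \tfrac14 Z_t(\beta_0)\leq\eta_t\leq 4Z_t(\beta_0).
\end{align*}
The proof would mirror subsection \ref{subsec:CollDomAnalysis}: the same decomposition $f=\mu+\bar\mu+h$ yields the same equation \eqref{eq:ErrorEq} for $h$, and the same ansatz \eqref{eq:LoopInequality1}--\eqref{eq:LoopInequality2} is closed by a continuation argument. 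However, there is no need for a separate $L^1$-proposition in addition to an $L^2$-proposition; one single estimate in $W^{1,1}_{p_0}$ suffices. I would apply Duhamel's formula to \eqref{eq:ErrorEq} against the semigroup generated by $-\eta_t\Lin$, bound $\norm[W^{1,1}_{p_0}]{S_r}$, $\norm[W^{1,1}_{p_0}]{\mathscr{R}h_r}$, $|\alpha^2_r|\,\norm[W^{1,1}_{p_0}]{\div(vh_r)}$, and $\eta_r\norm[W^{1,1}_{p_0}]{Q(h_r,h_r)}$ using Lemmas \ref{lem:FirstOrderEstimate} and \ref{lem:L1EstimateCollOp}, and then control the time integrals by partial integration exactly as in Step~2 of the proof of Proposition \ref{pro:PrepLoopInequality2}. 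The derivative estimate is obtained by differentiating \eqref{eq:ErrorEq} and using the product rule $\partial_iQ(u,v)=Q(\partial_iu,v)+Q(u,\partial_iv)$, together with Lemma \ref{lem:L1EstimateCollOp}; the resulting system for $(h,\nabla h)$ closes in $W^{1,1}_{p_0}$ provided $p_0>3$ (needed in Lemma \ref{lem:L1EstimateCollOp} to absorb $\gamma\leq 1$).

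The main obstacle is the extension of the exponential decay estimate of Lemma \ref{lem:LinearDecay} from $L^1_m$ to $W^{1,1}_{p_0}$, i.e.~$\norm[W^{1,1}_{p_0}]{e^{-t\Lin}g-\Pi_0 g}\lesssim e^{-\lambda t}\norm[W^{1,1}_{p_0}]{g}$. In the cutoff case with hard potentials this is available by the enlargement/factorization method of \cite{Mouhot2017FactorizationNonSymOper}, exactly as exploited in \cite{Tristani2014ExponentialConvergenceEquilHomogBoltz} to produce Lemma \ref{lem:LinearDecay}: one writes $\Lin=\mathcal{A}+\mathcal{B}$ with $\mathcal{B}$ hypo-dissipative in $W^{1,1}_{p_0}$ and $\mathcal{A}$ regularizing, so that the known $L^2(\mu^{-1/2})$-spectral gap transfers to $W^{1,1}_{p_0}$. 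Alternatively, since in the cutoff case the gain term $Q^+(f,\mu)+Q^+(\mu,f)$ has a regularizing effect and the loss term is multiplicative $f\mapsto -\nu(v)f$ with $\nu(v)\approx \langle v\rangle^\gamma$, one can directly differentiate and use the commutator $[\partial_i,\Lin]$, which is lower order; this yields the spectral gap in $W^{1,1}_{p_0}$ by perturbation of the $L^1_m$ result.

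Finally, having the $W^{1,1}_{p_0}$-version of Theorem \ref{thm:MainTheorem}, the application to the three cases \eqref{eq:SimpleShear}, \eqref{eq:SimpShearDecPlanarDil}, \eqref{eq:CombOrthShear} is identical to the proof in Section \ref{sec:Application}: verification of (I)--(II) through Lemma \ref{lem:AssumptionII} is unchanged (it only uses properties of $L_t$, $\nu_t$, not of $h$), the same time change $\tau=(t+1)^2/2-1/2$ handles the combined orthogonal shear, and the asymptotic ODE for $\beta_t^{-\gamma/2}$ is integrated verbatim to give \eqref{eq:AsymptoticsInversTempSimpleShear}, \eqref{eq:AsymptoticsInversTempSimpleShearDecaying} and \eqref{eq:AsymptoticsInversTempCombOrthogShear}. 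The decay bounds then follow from the $W^{1,1}_{p_0}$-estimate together with the asymptotics of $\eta_t$, giving the analogues of \eqref{eq:FirstSecondOrderHilbertExpDecay1}--\eqref{eq:FirstSecondOrderHilbertExpDecay3}.
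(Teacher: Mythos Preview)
Your high-level plan is right: replace the $\mathcal{H}^1_{p_0}$ analysis by a $W^{1,1}_{p_0}$ one and then repeat Section~\ref{sec:Application} verbatim. But the specific route you propose --- a \emph{single} Duhamel estimate in $W^{1,1}_{p_0}$ against $e^{-\eta_t\Lin}$ --- does not close. The obstruction is not the semigroup on $W^{1,1}_{p_0}$ (which you correctly identify as obtainable from the factorization method), but the drift term. After writing the equation for $\partial_i h$ and applying Duhamel, the term $\partial_i\big[\div((L_t-\alpha_t)v\,h)\big]$ contains $v\cdot\nabla\partial_i h$, so $\norm[W^{1,1}_{p_0}]{\div((L_t-\alpha_t)v\,h)}$ requires \emph{second} derivatives of $h$ with one extra moment, and the loop in $(h,\nabla h)$ cannot be closed at first order. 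A secondary loss occurs in the bilinear term: even with the simpler cutoff bound $\norm[L^1_{p_0}]{Q(f,g)}\lesssim\norm[L^1_{p_0}]{f}\norm[L^1_{p_0+\gamma}]{g}+\norm[L^1_{p_0+\gamma}]{f}\norm[L^1_{p_0}]{g}$, estimating $\norm[W^{1,1}_{p_0}]{Q(h,h)}$ costs $\gamma$ extra moments on $\nabla h$, which you do not control.

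The paper avoids both losses by \emph{keeping} the two-tier structure rather than collapsing it. The analogue of Proposition~\ref{pro:PrepLoopInequality2} is proved by a direct $L^1$ energy estimate, testing \eqref{eq:ErrorEq} and its $\partial_i$-version against $\sgn(h)\langle v\rangle^{p_0}$ and $\sgn(\partial_i h)\langle v\rangle^{p_0}$. The drift term is then handled by integration by parts (no derivative loss), and the dissipative estimates for $\Lin$ at the level of $h$ and $\partial_i h$ (the latter using the splitting $\Lin=\mathscr{B}_\varepsilon+\mathscr{A}_\varepsilon$ with $\mathscr{A}_\varepsilon:L^1_1\to H^1$ regularizing) produce a gain $-c_0\norm[L^1_{p_0+\gamma}]{\cdot}$ which absorbs the extra $\gamma$ moments from $Q$. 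This leaves a residual $\eta_t\,\norm[L^1]{h}$ on the right, exactly as in the non-cutoff case. That residual is then controlled by the analogue of Proposition~\ref{pro:LoopInequality2}: Duhamel in $L^1_m$ with $m=p_0-1$, using only Lemma~\ref{lem:LinearDecay} in $L^1_m$ (no $W^{1,1}$ semigroup needed). At this lower weight the drift costs $W^{1,1}_{m+1}=W^{1,1}_{p_0}$, which is supplied by the first step, and $Q(h,h)$ costs $L^1_{m+\gamma}\subset L^1_{p_0}$, which closes. In short: your ``commutator $[\partial_i,\Lin]$ is lower order'' observation is used, but inside an energy inequality, not to upgrade the semigroup; and the semigroup argument stays at the $L^1_m$ level as before.
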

Let us mention that existence of (weak) solutions to \eqref{eq:homenergBE} are known by the work of Cercignani \cite{Cercignani1989ExistenceHomoenergetic}. Uniqueness can be proved as in Proposition \ref{pro:Wellposedness}, which amounts to an application of a Povzner estimate. Furthermore, Povzner estimates allow to show the gain of moments as in Proposition \ref{pro:Wellposedness} (i). The propagation of regularity estimates can be proved by arguments used for the homogeneous Boltzmann equation, see e.g. \cite{MouhotVillani2004RegularityHomogBoltzEq}.

In order to prove this theorem, we show that the corresponding $ L^1 $-variant of Theorem \ref{thm:MainTheorem} is valid. To this end, we follow the strategy in Subsection \ref{subsec:CollDomAnalysis}.

\subsection{Collision-dominated analysis for cutoff kernels}
The proof is analogous to the one discussed in Subsection \ref{subsec:CollDomAnalysis} and the goal is to prove \eqref{eq:LoopInequality1} and the variant of \eqref{eq:LoopInequality2}, namely,
\begin{align*}
	\norm[W^{1,1}_{p_0}]{h_t}\leq \Omega\left( \dfrac{\varepsilon}{(1+t)^2}+\dfrac{1}{\eta_t^2}\right).
\end{align*}
The four main ingredients in Subsection \ref{subsec:CollDomAnalysis} are Lemma \ref{lem:LoopInequality1}, Lemma \ref{lem:FirstOrderEstimate} and Proposition \ref{pro:PrepLoopInequality2}, Proposition \ref{pro:LoopInequality2}. The first lemma extends without any changes. Let us discuss the other three preparatory results. 

\subsubsection{Estimate on $ \bar{\mu} $ for cutoff kernels}
The result in Lemma \ref{lem:FirstOrderEstimate} relies on corresponding coercivity estimates for the operator $ Lg=\mu^{-1/2}\Lin[\sqrt{\mu}g] $ in the cutoff case. Such estimates in Sobolev spaces $ H^k $ were discussed in \cite{MouhotNeumann2006QuantitativePertStudyConvEquilib}. (They considered general operators $ L $ which satisfy the hypothesis H1' and H2' therein. These assumptions are exactly the needed coercivity estimates.) To prove bounds including higher moments, it suffices by interpolation to prove corresponding coercivity estimates in $ L^2_p $, i.e. for all $ p\geq0 $
\begin{align}\label{eq:CutoffCoercivity}
	\dualbra{Lg}{g}_{L^2_p}\geq c_0\norm[L^2_{p+\gamma/2}]{g}^2-C\norm[L^2]{g}^2.
\end{align}
For this one uses the commutator estimate
\begin{align*}
	\left| \dualbra{\left\langle v \right\rangle ^p\, Lg}{\left\langle v \right\rangle^p\, g}_{L^2}-\dualbra{L\left[ \left\langle v \right\rangle ^p\,g \right] }{\left\langle v \right\rangle^p\,g}_{L^2} \right| \leq \varepsilon \norm[L^2_{p+\gamma/2}]{g}^2+C_\varepsilon \norm[L^2_{m-1+\gamma/2}]{g}^2
\end{align*}
holding for all $ \varepsilon>0 $. Furthermore, we have by the spectral gap, see e.g. \cite{MouhotNeumann2006QuantitativePertStudyConvEquilib},
\begin{align*}
	\dualbra{L\left[ \left\langle v \right\rangle ^p\,g \right] }{\left\langle v \right\rangle^p\,g}_{L^2} \geq c_0\norm[L^2_{\gamma/2}]{\left( I-\Pi_0 \right)\left\langle v \right\rangle^p\,g }^2 \geq \dfrac{c_0}{2}\norm[L^2_{p+\gamma/2}]{g}^2-C\norm[L^2]{g}^2.
\end{align*}
Recall that $ \Pi_0 $ is the projection onto $ \ker L $, which is spanned by the functions $ \varphi\, \sqrt{\mu} $ with $ \varphi(v)=1,\, v_1,\, v_2,\, v_3,\, |v|^2 $. Combining the previous estimates, using interpolation and choosing $ \varepsilon $ small yields \eqref{eq:CutoffCoercivity}.

\subsubsection{Estimate on the error term for cutoff kernels}
Instead of the estimates in $ L^2 $ leading to Proposition \ref{pro:PrepLoopInequality2} we use the following bounds, which replaces Lemma \ref{lem:RegEstLinearizedCollOp} and Lemma \ref{lem:L2EstimateCollOp}. However, due to the fact that there is no regularizing effect, we need a second estimate for the linearized collision operator, which takes into account first order derivatives.

\begin{lem}\label{lem:CutoffL1Estimate}
	We have the following estimates.
	\begin{enumerate}[(i)]
		\item For $ p\geq\gamma $ it holds
			\begin{align*}
				\norm[L^1_p]{Q(f,g)}\lesssim \norm[L^1_p]{f}\norm[L^1_{p+\gamma}]{g}+\norm[L^1_{p+\gamma}]{f}\norm[L^1_p]{g}.
			\end{align*}
		\item For $ p>2 $ we have
			\begin{align*}
				-\int_{\R^3}\Lin h\, \sgn(h)\, \left\langle v \right\rangle^p\, dv \leq -c_0 \norm[L^1_{p+\gamma}]{h}+C\norm[L^1]{h}.
			\end{align*}
		\item Let $ i=1,\, 2,\, 3 $ and $ p>2 $, then we have
		\begin{align*}
					-\int_{\R^3}\partial_{i}\left[ \Lin h \right]  \, \sgn(\partial_{i} h)\, \left\langle v \right\rangle^p\, dv \leq -c_0 \norm[L^1_{p+\gamma}]{\partial_{i}h}+C\norm[L^1_{p+\gamma}]{h}.
		\end{align*}
	\end{enumerate}
\end{lem}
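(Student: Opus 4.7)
My plan is to treat the three parts by classical cutoff-Boltzmann tools: bilinear estimates via the pre-post change of variables, a multiplicative-compact decomposition for the linearized operator, and a commutator identity for the derivative.

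For \textit{(i)} I would split $Q=Q^+-Q^-$. The loss part $Q^-(f,g)(v)=g(v)\int b(n\cdot\sigma)|v-v_*|^\gamma f(v_*)\,d\sigma dv_*$ is bounded directly using $|v-v_*|^\gamma\lesssim \langle v\rangle^\gamma+\langle v_*\rangle^\gamma$ together with Fubini. For the gain part, the pre-post change of variables converts the $L^1_p$-norm into
\begin{equation*}
	\int\!\!\int\!\!\int B(v-v_*,\sigma)|f(v_*)||g(v)|\langle v'\rangle^p\,d\sigma dv_*dv,
\end{equation*}
and the energy conservation $|v'|^2+|v'_*|^2=|v|^2+|v_*|^2$ gives $\langle v'\rangle^p\leq C_p(\langle v\rangle^p+\langle v_*\rangle^p)$. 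Together with the finiteness of $\int b\,d\sigma$ in the cutoff setting, this produces cross-products of weighted $L^1$-norms; the hypothesis $p\geq\gamma$ then lets us collect them into the two terms written.

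For \textit{(ii)} I would decompose $\Lin h=\nu(v)h-Kh$ with $\nu(v)=\int b(n\cdot\sigma)|v-v_*|^\gamma\mu(v_*)\,d\sigma dv_*$ and $Kh=Q^+(h,\mu)+Q^+(\mu,h)-\mu L[h]$, where $L[h](v):=\int b|v-v_*|^\gamma h(v_*)\,d\sigma dv_*$. For hard potentials one has the classical lower bound $\nu(v)\geq c_0\langle v\rangle^\gamma$, so $-\int \nu|h|\langle v\rangle^p\,dv\leq -c_0\norm[L^1_{p+\gamma}]{h}$. The compact perturbation is then handled as in \cite{Tristani2014ExponentialConvergenceEquilHomogBoltz}: truncate the $v_*$-integration at $|v_*|=R$. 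The truncated piece $K_R$ is bounded $L^1\to L^1_p$ (contributing $C_R\norm[L^1]{h}$), while the tail $K-K_R$ has operator norm $\eta(R)$ from $L^1_{p+\gamma}\to L^1_p$ with $\eta(R)\to 0$, so for $R$ large it is absorbed into the coercive term, yielding the stated estimate.

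For \textit{(iii)} the translation invariance $\partial_iQ(f,g)=Q(\partial_if,g)+Q(f,\partial_ig)$ of the collision operator, a direct consequence of $B$ depending only on $v-v_*$ and $n\cdot\sigma$, gives the commutator identity
\begin{equation*}
	\partial_i\Lin h=\Lin(\partial_i h)-Q(h,\partial_i\mu)-Q(\partial_i\mu,h).
\end{equation*}
Applying (ii) to $\Lin(\partial_i h)$ yields the coercive term $-c_0\norm[L^1_{p+\gamma}]{\partial_i h}$ together with a residual $C\norm[L^1]{\partial_i h}$, while applying (i) to the commutator terms (noting $\partial_i\mu\in\bigcap_q L^1_q$) contributes $C\norm[L^1_{p+\gamma}]{h}$. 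The main technical obstacle is absorbing the residual $C\norm[L^1]{\partial_i h}$ into the coercive term; I would do this by refining the truncation in the proof of (ii) so that the tail constant $\eta(R)$ in front of $\norm[L^1_{p+\gamma}]{\cdot}$ is chosen strictly smaller than the multiplicative coercivity $c_0$. Combined with the trivial bound $\norm[L^1]{\partial_i h}\leq \norm[L^1_{p+\gamma}]{\partial_i h}$, this leaves a positive (possibly smaller) coefficient in front of $\norm[L^1_{p+\gamma}]{\partial_i h}$ and establishes (iii) with this new $c_0$.
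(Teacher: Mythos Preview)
Your treatment of (i) is fine and matches the paper's ``rough estimates''. For (ii) the paper invokes a Povzner-type argument rather than the simple $|v_*|\leq R$ truncation you describe; the latter is not quite enough, because the tail of $Q^+(h,\mu)$ in your splitting still contributes a term of size $\norm[L^1_{p+\gamma}]{h}$ with constant comparable to the coercivity constant of $\nu$, so there is nothing left to absorb. You need the angular averaging (Povzner) to make the gain strictly smaller than the loss at large velocities; this is essentially the content of the $\mathscr{B}_\varepsilon$-dissipativity in \cite{Tristani2014ExponentialConvergenceEquilHomogBoltz}.

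The real gap is in (iii). Applying (ii) to $\partial_i h$ leaves a residual $C\norm[L^1]{\partial_i h}$ where $C=C_R$ is the \emph{large} constant coming from the truncated compact part $K_R$, not the small tail constant $\eta(R)$. Your proposed fix confuses the two: making $\eta(R)$ small forces $C_R$ large, so one cannot bound $C_R\norm[L^1]{\partial_i h}\leq C_R\norm[L^1_{p+\gamma}]{\partial_i h}$ and still absorb it into $c_0\norm[L^1_{p+\gamma}]{\partial_i h}$. The commutator identity alone does not close.

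The paper's remedy is to use the decomposition $\Lin=\mathscr{B}_\varepsilon+\mathscr{A}_\varepsilon$ and the \emph{regularizing effect of the gain term} (Mouhot--Villani): $\mathscr{A}_\varepsilon$ maps $L^1_1$ into compactly supported $H^1$ functions, so $\norm[L^1_p]{\partial_i[\mathscr{A}_\varepsilon h]}\lesssim \norm[H^1]{\mathscr{A}_\varepsilon h}\leq C_\varepsilon\norm[L^1_1]{h}$. The point is that the derivative lands on $h$, not on $\partial_i h$, which converts the dangerous residual into the harmless lower-order term $C\norm[L^1_{p+\gamma}]{h}$ that the statement allows. The dissipative part $\mathscr{B}_\varepsilon\partial_i h$ then supplies the full coercivity $-c_0\norm[L^1_{p+\gamma}]{\partial_i h}$ via Povzner. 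This regularization step is the missing ingredient in your argument.
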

\begin{proof}
	The first bound can be proved via rough estimates and the second one uses a variant of the Povzner estimate. For the last inequality, we use the decomposition of $ \Lin = \mathscr{B}_\varepsilon+\mathscr{A}_\varepsilon $ in \cite{Tristani2014ExponentialConvergenceEquilHomogBoltz}, which is defined as follows. Let $ 0\leq \Theta_\varepsilon \leq 1 $ be smooth, equal to one on the set
	\begin{align*}
			\set{|v|\leq 1/\varepsilon,\; 2\varepsilon\leq |v-v_*|\leq 1/\varepsilon,\; |\cos\theta|\leq 1-2\varepsilon}
	\end{align*}
	and supported in the set
	\begin{align*}
		\set{|v|\leq 1/2\varepsilon,\; \varepsilon\leq |v-v_*|\leq 2/\varepsilon,\; |\cos\theta|\leq 1-\varepsilon}.
	\end{align*}
	One can choose it in the form $ \Theta_\varepsilon (v,v_*,\theta)=\Theta_\varepsilon^1(v)\Theta_\varepsilon^2(v-v_*)\Theta_\varepsilon^3(\theta) $. Then, we define
	\begin{align*}
			\mathscr{B}_\varepsilon h &= \int_{\R^3}\int_{S^2}(1-\Theta_\varepsilon)\, |v-v_*|^\gamma \, b(\cos\theta)\left( \mu'h'_*+\mu'_*h' - \mu h_*\right) \, d\sigma dv_*-\norm[L^1(S^2)]{b}\left( |\cdot|^\gamma * \mu \right)\, h,
			\\
			\mathscr{A}_\varepsilon h &= \int_{\R^3}\int_{S^2}\Theta_\varepsilon\, |v-v_*|^\gamma \, b(\cos\theta)\left( \mu'h'_*+\mu'_*h' - \mu h_*\right) \, d\sigma dv_*.
	\end{align*}
	To calculate the derivative with respect to $ v_i $ we use the change of variables $ w=v-v_* $ in the $ v_* $-integration, perform the differentiation and undo the transformation again. We obtain in this way 
	\begin{align*}
			\partial_{i}\left[ \Lin h \right] &= \partial_{i}\left[ \mathscr{B}_\varepsilon h\right] + \partial_{i}\left[ \mathscr{A}_\varepsilon h\right]
			=  \mathscr{B}_\varepsilon \partial_{i}h+ \partial_{i}\left[ \mathscr{A}_\varepsilon h\right]-\norm[L^1(S^2)]{b}\left( |\cdot|^\gamma * \partial_{i}\mu \right)\, h
			\\
			&+\int_{\R^3}\int_{S^2}(1-\Theta_\varepsilon)\, |v-v_*|^\gamma \, b(\cos\theta)\left( (\partial_{i}\mu)'h'_*+(\partial_{i}\mu)'_*h' - (\partial_{i}\mu) h_*\right) \, d\sigma dv_*
			\\
			&-\int_{\R^3}\int_{S^2}\left[ \partial_{i}\Theta_\varepsilon^1\right] \Theta_\varepsilon^2\Theta_\varepsilon^3\, |v-v_*|^\gamma \, b(\cos\theta)\left( (\partial_{i}\mu)'h'_*+(\partial_{i}\mu)'_*h' - \partial_{i}\mu h_*\right) \, d\sigma dv_*.
	\end{align*}
	The last three terms can be estimated in $ L^1_p $ by $ C_\varepsilon\norm[L^1_{p+\gamma}]{h} $. The first term is strongly dissipative in $ L^1_p $, $ p>2 $, for $ \varepsilon>0 $ sufficiently small. For this see the proof of \cite[Lemma 2.6]{Tristani2014ExponentialConvergenceEquilHomogBoltz}, which covers the non-cutoff case and uses a variant of the Povzner estimate. In the cutoff case, there is no need to split $ b $ into a cutoff and non-cutoff part. We then obtain
	\begin{align*}
			\int_{\R^3}\, \mathscr{B}_\varepsilon \partial_{i} h \, \sgn(\partial_{i} h)\, \left\langle v \right\rangle^p\, dv \leq -c_0 \norm[L^1_{p+\gamma}]{\partial_{i} h}.
	\end{align*}
	Finally, the operator $ \mathscr{A}_\varepsilon $ is regularizing, in the sense that it maps $ L^1_1 $ to compactly supported functions and
	\begin{align*}
			\norm[H^1]{\mathscr{A}_\varepsilon h}\leq C_\varepsilon \norm[L^1_1]{h}.
	\end{align*}
	For this result see \cite[Lemma 4.16]{Mouhot2017FactorizationNonSymOper}. This relies on the regularizing effect of the gain term, see \cite[Theorem 3.1]{MouhotVillani2004RegularityHomogBoltzEq} and references therein.
	Putting all estimates together yields the result.
\end{proof}
\paragraph{Sketch of estimates.} We state here the estimates for the corresponding $ L^1 $-variant of Proposition \ref{pro:PrepLoopInequality2} and Proposition \ref{pro:LoopInequality2}. Let us mention that compared to Subsection \ref{subsec:CollDomAnalysis}, cf. \eqref{eq:Defintionm}, we define here $ m:=p_0-1>2 $. Furthermore, we give here only a priori estimates. In particular, when it comes to the continuation argument the continuity of  $ t\mapsto\norm[W^{1,1}_{p_0}]{h_t} $ is crucial. A way to ensure this is to regularize the initial data $ h^n_0 $ with $ \norm[W^{1,1}_{p_0}]{h^n_0}\leq 2\varepsilon $. For the corresponding solution we can prove the $ L^1 $-variant of \eqref{eq:SecondOrderHilbertExpDecay} as well as \eqref{eq:InvTempAsyBehav} and pass to the limit.

Concerning the proof of Proposition \ref{pro:PrepLoopInequality2} one uses again an equivalent norm (we again abbreviate $ p=p_0 $)
\begin{align*}
	\tnorm[W^{1,1}_p]{h} := \norm[L^1_p]{h}+\kappa\sum_{|\alpha|= 1}\norm[L^1_p]{\partial^\alpha h}
\end{align*}
for $ \kappa\in (0,1) $. We estimate 
\begin{align*}
		\dfrac{d}{dt}\tnorm[W^{1,1}_p]{h} = \int_{\R^3} \partial_th\; \sgn(h) \, \left\langle  v \right\rangle^p\, dv + \kappa \sum_{i=1}^3\int_{\R^3} \partial_i\partial_th\; \sgn(\partial_i h) \, \left\langle  v \right\rangle^p\, dv.
\end{align*}
The first term can be treated similar as in the proof of Proposition \ref{pro:PrepLoopInequality2} using Lemma \ref{lem:CutoffL1Estimate} (i), (ii). For the derivatives, the most important change appears for the linearized collision operator  (compare Step 1 (vi) in the proof of Proposition \ref{pro:PrepLoopInequality2}), where we use Lemma \ref{lem:CutoffL1Estimate} (iii). Here, the lower order term $ \kappa\eta_tC \norm[L^1_{p+\gamma}]{h} $  can be absorbed into the term $ -c_0\eta_t\norm[L^1_{p+\gamma}]{h} $, resulting from Lemma \ref{lem:CutoffL1Estimate} (ii), for $ \kappa>0 $ small enough. All in all, one obtains
\begin{align*}
	\dfrac{d}{dt}\tnorm[W^{1,1}_p]{h} = -\left(\dfrac{c_0\eta_t}{2}-C-\eta_tC\tnorm[W^{1,1}_p]{h}  \right) \tnorm[W^{1,1}_p]{h} + \dfrac{C'}{\eta_t} + \eta_t C'\norm[L^1]{h}.
\end{align*}
The second term is a consequence of the source and the last term is the remaining term in the estimate of Lemma \ref{lem:CutoffL1Estimate} (ii). We can again choose $ \beta_0\leq \varepsilon_0 $ small enough, so that $ \eta_t\gtrsim \bar{\eta}(\beta_0) $ is large enough to absorb the constant $ C $. Following Step 2 and Step 3 in the proof of Proposition \ref{pro:PrepLoopInequality2}, we can conclude the corresponding $ L^1 $-variant of Proposition \ref{pro:PrepLoopInequality2}.

Concerning Proposition \ref{pro:LoopInequality2} we can use the arguments without any changes, noting that the collision operator can be bounded via Lemma \ref{lem:CutoffL1Estimate} (i). The drift term is estimated via the $ W^{1,1}_{p_0} $-norm, recalling $ p_0= m+1 $. A key ingredient is Lemma \ref{lem:LinearDecay}, which was proved in \cite{Tristani2014ExponentialConvergenceEquilHomogBoltz} for the cutoff case. The same proof can be used to show the result for cutoff kernels. In fact, the proof simplifies, since a decomposition $ b=b_\delta+b_\delta^c $ into a cutoff and non-cutoff part is not needed.

Finally, the conclusion based on a continuation argument does not change.
All in all, the corresponding variant of Theorem \ref{thm:MainTheorem} holds true and the same arguments as in Section \ref{sec:Application} conclude the proof of Theorem \ref{thm:HomoenergticSol3}.

\bibliographystyle{habbrv}
\bibliography{Collision_dominated_hard_potentials}

\begin{thebibliography}{10}

\bibitem{AlexandreDesvillettesVillaniWennberg2000EntropyDissip}
R.~Alexandre, L.~Desvillettes, C.~Villani, and B.~Wennberg.
\newblock Entropy dissipation and long-range interactions.
\newblock {\em Archive for Rational Mechanics and Analysis}, 152(4):327--355,
  2000.

\bibitem{AlexandreEtAl2011GlobalExistenceFullRegBoltzmannEq}
R.~Alexandre, Y.~Morimoto, S.~Ukai, C.~J. Xu, and T.~Yang.
\newblock Global existence and full regularity of the boltzmann equation
  without angular cutoff.
\newblock {\em Communications in Mathematical Physics}, 304(2):513, 2011.

\bibitem{AlexandreMorimoto2012SmoothingEffectWeakSolHomogBE}
R.~Alexandre, Y.~Morimoto, S.~Ukai, C.-J. Xu, and T.~Yang.
\newblock Smoothing effect of weak solutions for the spatially homogeneous
  {B}oltzmann equation without angular cutoff.
\newblock {\em Kyoto J. Math.}, 52(3):433--463, 2012.

\bibitem{Arkeryd72OnBoltzmannEq}
L.~Arkeryd.
\newblock On the {B}oltzmann equation.
\newblock {\em Archive for Rational Mechanics and Analysis}, 45(1):1--16, 1972.

\bibitem{BarangerMouhot2005ExplicitSpectralGapEstimatesLinBoltzmannLandauOp}
C.~Baranger and C.~Mouhot.
\newblock Explicit spectral gap estimates for the linearized {B}oltzmann and
  {L}andau operators with hard potentials.
\newblock {\em Rev. Mat. Iberoamericana}, 21(3):819--841, 2005.

\bibitem{BobylevVelazq2020SelfsimilarAsymp}
A.~Bobylev, A.~Nota, and J.~J.~L. Vel{\'a}zquez.
\newblock Self-similar asymptotics for a modified {M}axwell--{B}oltzmann
  equation in systems subject to deformations.
\newblock {\em Communications in Mathematical Physics}, 380(1):409--448, 2020.

\bibitem{Cercignani1988BoltzmannEqApplication}
C.~Cercignani.
\newblock {\em The {B}oltzmann Equation and Its Applications}.
\newblock Springer New York, 1988.

\bibitem{Cercignani1989ExistenceHomoenergetic}
C.~Cercignani.
\newblock Existence of homoenergetic affine flows for the {B}oltzmann equation.
\newblock {\em Archive for Rational Mechanics and Analysis}, 105(4):377--387,
  1989.

\bibitem{Cercignani2001ShearFlowGranularMaterial}
C.~Cercignani.
\newblock Shear flow of a granular material.
\newblock {\em Journal of Statistical Physics}, 102(5):1407--1415, 2001.

\bibitem{CercignaniBoltzmannApproachShearFlow}
C.~Cercignani.
\newblock The {B}oltzmann equation approach to the shear flow of a granular
  material.
\newblock {\em R. Soc. Lond. Philos. Trans. Ser. A Math. Phys. Eng. Sci.},
  360(1792):407--414, 2002.
\newblock Discrete modelling and simulation of fluid dynamics (Corse, 2001).

\bibitem{DesvillettesMouhot2009StabilityUniquenessHomogBoltzmann}
L.~Desvillettes and C.~Mouhot.
\newblock Stability and uniqueness for the spatially homogeneous {B}oltzmann
  equation with long-range interactions.
\newblock {\em Archive for Rational Mechanics and Analysis}, 193(2):227--253,
  2009.

\bibitem{Liu2020BoltzmannUniformShearFlow}
R.~Duan and S.~Liu.
\newblock The {B}oltzmann equation for uniform shear flow.
\newblock {\em Archive for Rational Mechanics and Analysis}, 242(3):1947--2002,
  2021.

\bibitem{Galkin1958ClassOfSolution}
V.~Galkin.
\newblock On a class of solutions of {G}rad's moment equations.
\newblock {\em Journal of Applied Mathematics and Mechanics}, 22(3):532 -- 536,
  1958.

\bibitem{Galkin1964OneDimUnsteadySol}
V.~Galkin.
\newblock One-dimensional unsteady solution of the equation for the kinetic
  moments of a monatomic gas.
\newblock {\em Journal of Applied Mathematics and Mechanics}, 28(1):226 -- 229,
  1964.

\bibitem{GalkinOnSolKineticEqBoltzmann}
V.~S. Galkin.
\newblock On a solution of the kinetic equation of {B}oltzman.
\newblock {\em Prikl. Mat. Meh.}, 20:445--446, 1956.

\bibitem{Galkin1966ExactSol}
V.~S. Galkin.
\newblock Exact solutions of the kinetic-moment equations of a mixture of
  monatomic gases.
\newblock {\em Fluid Dynamics}, 1(5):29--34, 1966.

\bibitem{GarzoSantos2003KineticTheory}
V.~Garz{\'{o}} and A.~Santos.
\newblock {\em Kinetic Theory of Gases in Shear Flows}.
\newblock Springer Netherlands, 2003.

\bibitem{GressmanStrain2011GlobalClassicalSolBoltzmannEq}
P.~T. Gressman and R.~M. Strain.
\newblock Global classical solutions of the {B}oltzmann equation without
  angular cut-off.
\newblock {\em J. Amer. Math. Soc.}, 24(3):771--847, 2011.

\bibitem{Mouhot2017FactorizationNonSymOper}
M.~P. Gualdani, S.~Mischler, and C.~Mouhot.
\newblock Factorization of non-symmetric operators and exponential
  {$H$}-theorem.
\newblock {\em M\'{e}m. Soc. Math. Fr. (N.S.)}, 153:137, 2017.

\bibitem{He2018SharpBoundsBoltzmannLandau}
L.-B. He.
\newblock Sharp bounds for {B}oltzmann and {L}andau collision operators.
\newblock {\em Ann. Sci. \'{E}c. Norm. Sup\'{e}r. (4)}, 51(5):1253--1341, 2018.

\bibitem{HerauTononTristani2020RegEstCauchyInhomBoltzmann}
F.~H{\'e}rau, D.~Tonon, and I.~Tristani.
\newblock Regularization estimates and cauchy theory for inhomogeneous
  {B}oltzmann equation for hard potentials without cut-off.
\newblock {\em Communications in Mathematical Physics}, 377(1):697--771, 2020.

\bibitem{VelazqJames2019LongAsympHomoen}
R.~D. James, A.~Nota, and J.~J.~L. Vel{\'a}zquez.
\newblock Long-time asymptotics for homoenergetic solutions of the {B}oltzmann
  equation: Collision-dominated case.
\newblock {\em Journal of Nonlinear Science}, 29(5):1943--1973, 2019.

\bibitem{JamesVelazq2019SelfsimilarProfilesHomoenerg}
R.~D. James, A.~Nota, and J.~J.~L. Vel{\'a}zquez.
\newblock Self-similar profiles for homoenergetic solutions of the {B}oltzmann
  equation: Particle velocity distribution and entropy.
\newblock {\em Archive for Rational Mechanics and Analysis}, 231(2):787--843,
  2019.

\bibitem{JamesVelazq2019LongtimeAsymptoticsHypDom}
R.~D. James, A.~Nota, and J.~J.~L. Vel{\'{a}}zquez.
\newblock Long time asymptotics for homoenergetic solutions of the {B}oltzmann
  equation. {H}yperbolic-dominated case.
\newblock {\em Nonlinearity}, 33(8):3781--3815, 2020.

\bibitem{Kepka2021SelfSimilar}
B.~Kepka.
\newblock Self-similar profiles for homoenergetic solutions of the {B}oltzmann
  equation for non-cutoff {M}axwell molecules, 2021, arXiv:2103.10744.

\bibitem{Klaus1977BoltzmannCollOp}
M.~Klaus.
\newblock Boltzmann collision operator without cut-off.
\newblock {\em Helv. Phys. Acta}, 50(6):893--903, 1977.

\bibitem{LuWennberg2002SolutionIncEnergy}
X.~Lu and B.~Wennberg.
\newblock Solutions with increasing energy for the spatially homogeneous
  {B}oltzmann equation.
\newblock {\em Nonlinear Analysis: Real World Applications}, 3(2):243--258,
  2002.

\bibitem{Matthies2019}
K.~Matthies and F.~Theil.
\newblock Rescaled objective solutions of {F}okker--{P}lanck and {B}oltzmann
  equations.
\newblock {\em SIAM Journal on Mathematical Analysis}, 51(2):1321--1348, 2019.

\bibitem{MischlerWennberg1999HomogBE}
S.~Mischler and B.~Wennberg.
\newblock On the spatially homogeneous {B}oltzmann equation.
\newblock {\em Annales de l'I.H.P. Analyse non lin\'eaire}, 16(4):467--501,
  1999.

\bibitem{Mouhot2006ExplicitCoercivityEstLinBoltzmannLandauOp}
C.~Mouhot.
\newblock Explicit coercivity estimates for the linearized {B}oltzmann and
  {L}andau operators.
\newblock {\em Communications in Partial Differential Equations},
  31(9):1321--1348, 2006.

\bibitem{Mouhot2006RateConvergenceEquilibSpatiallyHomogBoltzmannEq}
C.~Mouhot.
\newblock Rate of convergence to equilibrium for the spatially homogeneous
  {B}oltzmann equation with hard potentials.
\newblock {\em Communications in Mathematical Physics}, 261(3):629--672, 2006.

\bibitem{MouhotNeumann2006QuantitativePertStudyConvEquilib}
C.~Mouhot and L.~Neumann.
\newblock Quantitative perturbative study of convergence to equilibrium for
  collisional kinetic models in the torus.
\newblock {\em Nonlinearity}, 19(4):969--998, 2006.

\bibitem{MouhotStrain2007SpectralGapCoercivityEstLinBoltzmannOp}
C.~Mouhot and R.~M. Strain.
\newblock Spectral gap and coercivity estimates for linearized {B}oltzmann
  collision operators without angular cutoff.
\newblock {\em J. Math. Pures Appl. (9)}, 87(5):515--535, 2007.

\bibitem{MouhotVillani2004RegularityHomogBoltzEq}
C.~Mouhot and C.~Villani.
\newblock Regularity theory for the spatially homogeneous {B}oltzmann equation
  with cut-off.
\newblock {\em Archive for Rational Mechanics and Analysis}, 173(2):169--212,
  2004.

\bibitem{Pao1974BoltzmannCollOp}
Y.~P. Pao.
\newblock Boltzmann collision operator with inverse-power intermolecular
  potentials. {I}, {II}.
\newblock {\em Comm. Pure Appl. Math.}, 27:407--428, 559--581, 1974.

\bibitem{Tristani2014ExponentialConvergenceEquilHomogBoltz}
I.~Tristani.
\newblock Exponential convergence to equilibrium for the homogeneous
  {B}oltzmann equation for hard potentials without cut-off.
\newblock {\em Journal of Statistical Physics}, 157(3):474--496, 2014.

\bibitem{Truesdell1956PressureFluxII}
C.~Truesdell.
\newblock {\em On the Pressures and the Flux of Energy in a Gas according to
  {M}axwell's Kinetic Theory, II}, volume~5.
\newblock Indiana University Mathematics Department, 2020/11/09/ 1956.

\bibitem{TruesdellMuncasterFundamentalsMaxwell}
C.~Truesdell and R.~G. Muncaster.
\newblock {\em Fundamentals of {M}axwell's kinetic theory of a simple monatomic
  gas}, volume~83 of {\em Pure and Applied Mathematics}.
\newblock Academic Press, 1980.

\bibitem{Villani1998NewClassWeakSol}
C.~Villani.
\newblock On a new class of weak solutions to the spatially homogeneous
  {B}oltzmann and {L}andau equations.
\newblock {\em Archive for Rational Mechanics and Analysis}, 143(3):273--307,
  1998.

\bibitem{Villani2002Review}
C.~Villani.
\newblock {\em A Review of Mathematical Topics in Collisional Kinetic Theory},
  volume~1 of {\em Handbook of Mathematical Fluid Dynamics}.
\newblock North-Holland, 2002.

\end{thebibliography}

\end{document}